\newtheorem{theorem}{THEOREM}[section]
\newtheorem{lemma}[theorem]{LEMMA}
\newtheorem{corollary}[theorem]{COROLLARY}
\newtheorem{problem}[theorem]{PROBLEM}
\newtheorem{example}[theorem]{EXAMPLE}
\newtheorem{defin}[theorem]{DEFINITION}
\newtheorem{fact}[theorem]{FACT}
\newenvironment{definition}{\begin{defin}\rm}{\end{defin}}
\def\zilch{\vrule height0pt width0pt depth0pt}
\def\endproof{\ifvmode\else\unskip\fi\zilch%
\penalty9999 \hbox{}\nobreak\hfill\quad$\Box$\endtrivlist}
\def\bigop#1{\mathop{\mathchoice
{\vcenter{\hbox{\huge$#1$}}}
{\vcenter{\hbox{\large$#1$}}}
{\vcenter{\hbox{\large$#1$}}}
{\vcenter{\hbox{$\scriptstyle #1$}}}}
\displaylimits}
\def\Exists{\bigop{\exists}}
\def\Lbig{\c L^{\forall\bdf\ang n}_{\bo\bod\dit}}
\def\R{\mathbb{R}}
\def\To{\Rightarrow}
\def\Var{\mathsf{Var}}
\def\Vec#1#2{#1_0,\ldots{},#1_{#2}}
\def\Vecc#1#2{#1_0,\ldots{},#1_{#2-1}}
\def\ang#1{\langle #1\rangle}
\def\b #1{\overline{#1}}
\def\ba{boolean algebra}
\def\bdf{[\neq]}
\def\bo{\Box}
\def\bod{[d]}
\def\c #1{{\mathcal #1}}
\def\cl{\mathop{\rm cl}\nolimits}
\def\claim{\par\medskip\noindent{\bf Claim. }}
\def\clop{{\sf Clop}}
\def\ddf{\ang{\neq}}
\def\di{\Diamond}
\def\did{{\ang d}}
\def\didt{{\ang{dt}}}
\def\dims{dense-in-itself \ms}
\def\dimss{dense-in-themselves \ms s}
\def\dit{{\ang t}}
\def\dm{dimen\-sion}
\def\fo{first-order}
\def\int{\mathop{\rm int}} 
\newcommand{\ints}{\mbox{\(\mathbb Z\)}}
\def\kudsys{S4DT$_1$S}
\def\lad{\c L^{\forall}_{\bod}}
\def\lbc{\c L^{\ang n}_\bo}  
\def\ms{metric space}
\def\nhd{neighbourhood}
\def\pa{$\forall$}
\def\pd{pairwise disjoint}
\def\pfclaim{\par\noindent{\bf Proof of claim. }}
\newcommand{\rats}{\mbox{\(\mathbb Q\)}}
\def\sem #1{[\![#1]\!]}
\def\setB{\mathbb{B}}
\def\setC{\mathbb{C}}
\def\setG{\mathbb{G}}
\def\setI{\mathbb{I}}
\def\str{structure}
\def\t{\ang t}
\def\uf{ultrafilter}
\def\vec#1#2{#1_1,\ldots{},#1_{#2}}
\def\unarycomplement{{\sim}}
\title{Strong 
completeness of modal logics
over 0-dimensional metric spaces\footnote{AMS 2010 classification: Primary 03B45; Secondary 54E35. Key words: dense in itself; Cantor set; coderivative operator; universal modality; difference modality; graded modalities.  We thank the referees for very helpful reports, and the editor Wesley Holliday for handling the paper.}}
\author{Robert Goldblatt\thanks{School of Mathematics and Statistics,
Victoria University,
 Wellington, New Zealand.
\tt Rob.Goldblatt@msor.vuw.ac.nz, sms.vuw.ac.nz/$\sim$rob}
\ and Ian Hodkinson\thanks{Department of Computing,
Imperial College London, London, UK. 
\tt  i.hodkinson@imperial.ac.uk,
doc.ic.ac.uk/$\sim$imh/}}
\begin{document} 
\maketitle

\begin{abstract}
We prove strong completeness 
results for some modal logics with the universal modality, with respect to
their topological semantics over 0-\dm al \dimss.
We also use failure of compactness to show that, for some languages and 
spaces, no standard
modal deductive system is strongly complete.
\end{abstract}

%

\section{Introduction}\label{sec:intro}

Modal languages can be given  semantics in a metric or topological space,
by interpreting $\bo$  as the interior operator.
This `topological semantics'
predates Kripke semantics and has a distinguished history.
In a celebrated result,
\cite{McKiT44,mcki:theo48:short} showed that
the logic of an arbitrary separable \dims\ in this semantics is 
the modal logic S4, 
whose chief axioms are $\bo \varphi\to \varphi$ and $\bo\varphi\to\bo\bo\varphi$.
The separability assumption was removed by \cite{RS:mm}.

So we can say two things.
Fix any \dims\ $X$ and
any set $\Sigma\cup\{\varphi\}$ of 
modal formulas, and  write `$\vdash$' for S4-provability.
First, $\vdash$ is \emph{sound over $X$:}
if $\Sigma\vdash\varphi$ then $\varphi$ is a semantic consequence of $\Sigma$
over $X$.
Second, $\vdash$ is \emph{complete over $X$:}
if $\varphi$ is a semantic consequence of $\Sigma$
over $X$, \emph{and $\Sigma$ is finite,} then $\Sigma\vdash\varphi$.

We say that a modal deductive system $\vdash$
is \emph{strongly complete over $X$}
if the second statement above holds for arbitrary --- even infinite --- sets $\Sigma$
of formulas.

\subsection{Some history}
Although
McKinsey and Tarski's result has been well known for a long time,
the study of strong completeness 
for modal languages in topological semantics 
seems to have begun only quite recently.
Gerhardt \cite[theorem 3.8]{gerhardt04}
proved that S4 is
strongly complete over the metric space $\rats$ of the rational numbers.
(He proved further results, in stronger  languages,
that imply our theorem~\ref{thm:stro compl noncompact} below for this
particular  space.)
The field opened out when 
\cite{Kremer2010:stro}
proved that S4 is strongly complete
over every \dims, thereby strengthening McKinsey and Tarski's  theorem.

In appendix I of \cite{McKiT44},
the authors suggested studying the more expressive 
`coderivative' operator $\bod$.
In the modal language incorporating this operator,
different \dimss\ have different logics and can need different treatment.
For this language and some
stronger ones incorporating the modal mu-calculus or the equivalent 
`tangle' operators, soundness and
strong completeness 
were shown by \cite{goldhod:tang17:apal}
for some deductive systems over some \dimss,
and by~\cite{gold:tang16} for other deductive systems over all
0-\dm al \dimss. More details will be given in \S\ref{ss:stro compl}

None of these languages include the universal modality \pa.
Indeed, in the presence of \pa, strong completeness cannot always be achieved.
No modal deductive system
for the language with $\bo$ and \pa\
is sound and strongly complete
over any compact locally connected \dims\ \cite[corollary 9.5]{goldhod:tang17:apal}.

\subsection{The work of this paper}\label{ss:intro of here}
Not covered by the last-mentioned result are the many
 \dimss\ that are not compact and locally connected.
For example, \emph{0-\dm al dense-in-them\-selves metric spaces} are almost never compact
(the only exception is the Cantor set) and never locally connected.
\emph{So in this paper, we study
strong completeness for 0-\dm al \dimss\ in languages able to express \pa.}
Sound and complete deductive systems for
these spaces in languages with \pa\ were given by~\cite{gold:tang16},
and for languages with the even more powerful `difference operator' $\bdf$
by~\cite{Kud06:topdiff}.
In this paper, we ask
whether the systems are
strongly complete.

The answer depends on both the language and the space,
making for an interesting variety as well as some novel techniques.
Our main conclusions are outlined in table~\ref{tab0}.

\begin{table}[h]
\begin{center}
\def\myskip{\hskip1em\null}
\begin{tabular}{ccc}
\hline
&
$\forall$
&
$[\neq]$
\\
\hline
$\bo$\myskip
&\myskip yes for all\myskip&\myskip yes for Cantor set; open for others
\\
$\bod$\myskip&\myskip  no for Cantor set; yes for others
\myskip&\myskip no for Cantor set; open for others
\\
\hline
\end{tabular}
\caption{For which 0-\dm al \dimss\ do we have strong completeness?}
\label{tab0}
\end{center}
\end{table}

In more detail,
let $X$ be a 0-\dm al \dims. 
\begin{enumerate}
\item In the language comprising $\forall$ and $\bo$,
the system S4U is strongly complete over 
$X$ (corollary~\ref{cor:S4U all 0dim}).

\item If $X$ is the Cantor set,
then in the language comprising $\bdf$ and $\bo$,
the system \kudsys\ is strongly complete
over $X$
(corollary~\ref{cor:kudinov}).

\item\label{it:Krem} If $X$ is not homeomorphic to the Cantor set, then in the language comprising $\forall$ and $\bod$,
the system KD4U is strongly complete
over $X$
(corollary~\ref{cor:str compl noncomp}).
\end{enumerate}
We will not need details of these systems,
but briefly, S4U comprises
the basic modal K axioms for $\bo$ and \pa,  the S4 axioms $\bo \varphi\to\varphi$ and $\bo\varphi\to\bo\bo\varphi$, and the U axioms $\forall\varphi\to\varphi$,
 $\varphi\to\forall\exists\varphi$, $\forall\varphi\to\forall\forall\varphi$,
and $\forall\varphi\to\bo\varphi$.
In KD4U, $\bo \varphi\to\varphi$ is replaced by the D axiom
$\di\top$ (and $\bo$ by $\bod$ throughout). 
The inference rules are modus ponens and universal generalisation.
The axioms of \kudsys\
boil down to the S4 axioms for $\bo$, the  K axioms for $[\neq]$,
$p\to[\neq]\ang{\neq}p$, $\forall p\to\bo p\wedge[\neq][\neq] p$,
and $[\neq] p\to\di p\wedge[\neq]\bo p$, where $\forall\varphi=\varphi\wedge[\neq]\varphi$; the rules are modus ponens, universal generalisation, and substitution.
Full definitions can be found
in, e.g.,~\cite[\S8.1]{goldhod:tang17:apal} and \cite{goldhod:tang17:sl},
and \cite[\S2]{Kud06:topdiff} for \kudsys\null.

To prove these results,
we will use  completeness theorems from \cite{gold:tang16}
and \cite{Kud06:topdiff}.
We lift them to strong completeness
by methods similar to those of
\cite{Kremer2010:stro} for non-compact spaces,
and  \fo\ compactness for the Cantor set.

Limitative results will also be given:
\begin{enumerate}
\setcounter{enumi}{3}
\item Let $X$ be a  \dims.
In any language able to express $\forall$ and the tangle operators (or the mu-calculus),
no modal deductive system is sound and strongly complete over $X$
(corollary~\ref{cor:tangle out}).

\item \label{it:compact T1}
Let $X$ be an infinite compact 
T1 topological space.
In any language able to express $\forall$ and $\bod$, no modal deductive system is
sound and strongly complete over $X$ (theorem~\ref{thm:str compl fails cantor [d],A}).

\end{enumerate}
One striking consequence is that
for the language comprising $\forall$ and $\bod$,
KD4U is sound and complete over 
every 0-\dm al \dims\ $X$
(by the discussion following \cite[theorem 8.4]{gold:tang16}), but 
by (\ref{it:Krem}) and (\ref{it:compact T1}),
 it is \emph{strongly} complete only when $X$ is not compact.
Over the Cantor set, no orthodox modal deductive system
for this language is strongly complete.

\section{Basic definitions}\label{sec:basics 1}

In this section, we give the main definitions and some notation.
We begin with some stock items.
We will use 
\ba s sometimes, and \uf s many times, and we refer the reader to, e.g., \cite{GiHa:ba}
for information.
Let $\c B=(B,+,-,0,1)$ be a \ba.
As usual, for elements 
$a,b\in B$
we write $a\leq b$ iff $a+b=b$, and $a\cdot b=-(-a+-b)$.
An \emph{atom} of $\c B$ is  a $\leq$-minimal nonzero element,
and $\c B$ is said to be \emph{atomless} if it has no atoms.
An \emph{\uf\ of $\c B$} is a subset $D\subseteq B$
such that for every $a,b\in B$ we have
$b\geq a\in D\To b\in D$,
$a,b\in D\To a\cdot b\in D$, and
$a\in D\iff -a\notin D$.
We say that $D$ is \emph{principal} if it contains an atom,
and \emph{non-principal} if not.

We denote the first infinite ordinal by $\omega$.
It is also a cardinal. 
For a set $S$,
we write $\wp(S)$ for its power set (set of subsets),
and $|S|$ for its cardinality.
We say that $S$ is \emph{countable}
if $|S|\leq\omega$, and \emph{countably infinite} if $|S|=\omega$.
An \emph{\uf\ on $S$} is an
\uf\ of the \ba\  $(\wp(S),\cup,\unarycomplement,\emptyset,S)$,
where $\unarycomplement$ denotes the unary complement operation
(we call such algebras, and subalgebras of them, \emph{boolean set algebras}).
The principal \uf s on $S$ are those of the form
$\{T\subseteq S:s\in T\}$ for $s\in S$.

\subsection{Kripke frames}\label{ss:kripke frames}

A \emph{(Kripke) frame} is a pair $\c F=(W,R)$,
where $W$ is a non-empty set of `worlds'
and $R$ is a binary relation on $W$.
For $w\in W$, we write $R(w)$ for $\{v\in W:R(w,v)\}$.
We say that $\c F$ is \emph{countable} if $W$ is countable,
\emph{serial} if $R(w)\neq\emptyset$ for every $w\in W$,
and \emph{transitive} if $R$ is transitive.

For frames $\c F=(W,R)$ and $\c F'=(W',R')$,
a \emph{p-morphism from $\c F$ to $\c F'$} is a map 
$f:W\to W'$
such that $f(R(w))=R'(f(w))$ for every $w\in W$.
See standard modal logic texts such as \cite{BRV:ml}
and \cite{ChagZak:ml} for information about p-morphisms.

\subsection{Topological spaces}\label{ss:top spaces}

We will assume some familiarity with topology,
but we give a rundown of the
main definitions and notation used later.
Other topological terms that we use occasionally,
and vastly more  information, can be found in 
topology texts such as \cite{Engelking89} and \cite{Will70}
(these two will be our main references).

A \emph{topological space} is a pair $(X,\tau)$,
where $X$ is a non-empty set and $\tau\subseteq\wp(X)$ satisfies:
\begin{enumerate}

\item if $\c S\subseteq\tau$ then $\bigcup\c S\in\tau$,

\item if $\c S\subseteq\tau$ is finite then $\bigcap\c S\in\tau$,
on the understanding that $\bigcap\emptyset=X$.

\end{enumerate}
So $\tau$ is a set of subsets of $X$ closed under unions and finite intersections.
Such a set is called a \emph{topology on $X$.}
By taking $\c S=\emptyset$,
it follows that $\emptyset,X\in\tau$.
The elements of $\tau$ are called \emph{open subsets} of $X$, or just \emph{open sets.}
An \emph{open \nhd} of a point $x\in X$ is an open set containing $x$.
A subset $C\subseteq X$ is called \emph{closed} if $X\setminus C$ is open,
and \emph{clopen} if it is both closed and open.
The set of closed subsets of $X$ is closed under intersections and finite unions.
Writing $\clop(X)$ for the set of clopen subsets of $X$,
$(\clop(X),\cup,\unarycomplement,\emptyset,X)$ is a boolean set algebra.
If $O$ is open and $C$ closed then  $O\setminus C$ is open and
$C\setminus O$ is closed.

We use the  signs $\int$, $\cl$, $\did$ to denote the \emph{interior,}
\emph{closure,} and \emph{derivative} operators, respectively.
So for $S\subseteq X$, 
\begin{itemize}
\item $\int S=\bigcup\{O\in\tau:O\subseteq S\}$ --- the largest
open set contained in $S$,

\item   $\cl S=\bigcap\{C\subseteq X:C$ closed, $S\subseteq C\}$
--- the smallest closed set containing $S$;
we have $\cl S=\{x\in X:S\cap O\neq\emptyset
\mbox{ for every open \nhd\ } O\mbox{ of } x\}$,

\item $\did S=\{x\in X:S\cap O\setminus\{x\}\neq\emptyset
\mbox{ for every open \nhd\ } O\mbox{ of } x\}$.

\end{itemize}
For all subsets $A,B$ of $X$, we have 
\[
\begin{array}{rcl}
\cl(A\cup B)&=&\cl A\cup\cl B,
\\
\did(A\cup B)&=&\did A\cup\did B, 
\\
\int(A\cap B)&=&\int A\cap\int B.
\end{array}
\]
That is, \emph{closure and $\did$ are additive and interior is multiplicative.}
It follows that they are all \emph{monotonic:}
if $A\subseteq B$ then $\cl A\subseteq\cl B$,
$\did A\subseteq\did B$, and $\int A\subseteq\int B$.

Fix a topological space $(X,\tau)$.
A \emph{subspace} of $(X,\tau)$ is 
a topological space of the form $(Y,\tau_Y)$ where $\emptyset\neq Y\subseteq X$
and $\tau_Y=\{O\cap Y:O\in\tau\}$.

For a set $\tau_0\subseteq\wp(X)$,
the closure $\tau$ of $\tau_0$ under arbitrary unions and finite intersections
is a topology on $X$, called the topology \emph{generated} by $\tau_0$.
A \emph{base} for (the topology $\tau$ on) $(X,\tau)$
is a set $\tau_0\subseteq\tau$
such that $\tau=\{\bigcup \c S:\c S\subseteq\tau_0\}$.

An \emph{open cover} of $(X,\tau)$
is a subset $\c S\subseteq\tau$
 with $\bigcup\c S=X$.
We say then that $\c S$ is \emph{locally finite} if
every $x\in X$ has an open \nhd\ disjoint from  all  but finitely many
sets in~$\c S$.
An open cover $\c S'$ of $(X,\tau)$ is a 
\emph{subcover} of $\c S$ if $\c S'\subseteq\c S$,
and a \emph{refinement} of $\c S$ 
if
for every $S'\in\c S'$ there is $S\in\c S$ with $S'\subseteq S$.

The following assorted topological properties are well known and much studied.
We say that $(X,\tau)$ is 
\emph{dense in itself} if
no singleton subset of $X$ is open; 
\emph{T1} if  every singleton subset of $X$ is closed;
\emph{T2} if  every two distinct points of $X$ have disjoint open \nhd s;
 \emph{0-\dm al} if
it is T1 and has a base consisting of clopen sets;
\emph{separable} if $X$ has a countable subset $D$ with $X=\cl D$;
 \emph{Lindel\"of}
if every open cover of $X$ has a countable subcover;
 \emph{compact}
if every open cover of $X$ has a finite subcover;
and \emph{paracompact} if it is T2 and  every open cover 
of $(X,\tau)$ refines to a locally
finite  open cover of $(X,\tau)$.
(Not everyone requires that
0-\dm al spaces be T1 or that
paracompact spaces be T2, and some
writers add extra conditions such as T2 or regularity to the definitions
of compact and Lindel\"of. 
The spaces involved in this paper meet all these conditions.)
Easily, T2 implies T1.

We follow standard practice and identify (notationally) the space $(X,\tau)$
with $X$.

\subsection{Metric spaces}\label{ss:metric spaces}

A \emph{metric space} is a pair $(X,d)$,
where $X$ is a 
non-empty
set
and $d:X\times X\to\R$ is a `distance function'
(having nothing to do with the operator $\did$ above) satisfying, for all $x,y,z\in X$,
\begin{enumerate}
\item $d(x,y)=d(y,x)\geq0$,

\item $d(x,y)=0$ iff $x=y$,

\item $d(x,z)\leq d(x,y)+d(y,z)$ (the `triangle inequality').
\end{enumerate}
Examples of metric spaces abound and include the real numbers
$\R$ with the standard distance function $d(x,y)=|x-y|$,
$\R^n$ with Pythagorean distance, etc.
As usual, we  often identify (notationally) $(X,d)$ with $X$.

Let $(X,d)$ be a metric space.
A \emph{subspace} of $(X,d)$ is a metric space of the form
$(Y,d\restriction Y\times Y)$, for non-empty $Y\subseteq X$.
For $x\in X$ define 
$
d(x,Y)=\inf\{d(x,y):y\in Y\}.
$
We leave $d(x,\emptyset)$ undefined.
For a real number $\varepsilon>0$,
we let $N_\varepsilon(x)$ denote the  `open ball' 
$\{y\in X:d(x,y)<\varepsilon\}$, and for
$S\subseteq X$ we put $N_\varepsilon(S)=\bigcup\{N_\varepsilon(x):x\in S\}$.
A metric space $(X,d)$ gives rise to a topological space $(X,\tau_d)$ 
in which
a subset $O\subseteq X$ is declared to be open (i.e., in $\tau_d$)
iff for every $x\in O$, there is some $\varepsilon>0$ such that
$N_\varepsilon(x)\subseteq O$.
In other words, the open sets are the unions of open balls.
We will say that a metric space has a given topological property
(such as being dense in itself) if its associated topological space has the property.
For example, it is known  that every \ms\ is T2 (easy), and  paracompact
(\cite{S48}).

\subsection{Modal languages}

We fix a countably infinite set $\Var$ of \emph{propositional variables,} or \emph{atoms.}
We will be considering a number of modal languages.
The biggest of them is
denoted by 
$\Lbig$, 
which is a set of formulas defined as follows:
\begin{enumerate}
\item each $p\in\Var$ is a formula (of $\Lbig$),

\item $\top$ is a formula,

\item if $\varphi,\psi$ are formulas then so are
$\neg\varphi$, $(\varphi\wedge\psi)$,
$\bo\varphi$,  $\bod\varphi$,  $\forall\varphi$,
$\bdf\varphi$,
and $\ang n\varphi$ for each $n<\omega$,

\item if $\Delta$ is a non-empty finite set of formulas then
$\dit\Delta$  is a formula.

\end{enumerate}
We use standard abbreviations:
$\bot$ denotes $\neg\top$,
$(\varphi\vee\psi)$
denotes $\neg(\neg\varphi\wedge\neg\psi)$,
$(\varphi\to\psi)$ denotes $\neg(\varphi\wedge\neg\psi)$,
$(\varphi\leftrightarrow\psi)$ denotes
$((\varphi\to\psi)\wedge(\psi\to\varphi))$,
$\di\varphi$ denotes $\neg\bo\neg\varphi$,
$\did\varphi$ denotes $\neg\bod\neg\varphi$,
$\exists\varphi$ denotes $\neg\forall\neg\varphi$,
and 
$\ddf\varphi$ denotes $\neg\bdf\neg\varphi$.
Parentheses will be omitted where possible, by the usual methods.
For a non-empty finite set $\Delta=\{\vec\delta n\}$ of formulas,
we let $\bigwedge\Delta$ denote $\delta_1\wedge\ldots\wedge\delta_n$
and $\bigvee\Delta$ denote $\delta_1\vee\ldots\vee\delta_n$
(the order and bracketing of the conjuncts and disjuncts will always be immaterial). 
We set $\bigwedge\emptyset=\top$ and $\bigvee\emptyset=\bot$.

The connective $\bod$ is called
the \emph{coderivative operator,}
and the connective $\dit$ is called the \emph{tangle connective}
or  \emph{tangled closure operator.}  A more powerful tangle connective
$\didt$ can also be considered 
(see, e.g., \cite{gold:tang16,goldhod:tang17:apal})
but we will not need it here.
The connectives $\forall$ and $\bdf$ are called the \emph{universal} and 
\emph{difference} modalities, respectively, and
the connectives $\ang n$ are sometimes called the \emph{counting} or \emph{graded} modalities.

We will be using various \emph{sublanguages} of $\Lbig$,
and they will be denoted in the obvious way by omitting prohibited operators
from the notation.
So for example, $\c L^{\forall}_{\bo}$
denotes the set of all $\Lbig$-formulas
that do not involve $\bod,$  $\dit$, $\bdf$, or any $\ang n$.

\subsection{Kripke semantics}\label{ss:Kripke sem}

An \emph{assignment} or \emph{valuation} into a frame $\c F=(W,R)$ is
a map $h:\Var\to\wp(W)$.
A \emph{Kripke model} is a triple $\c M=(W,R,h)$, where $(W,R)$ is a frame
and $h$ an assignment into it.
The \emph{frame of $\c M$} is $(W,R)$.

For every Kripke model $\c M=(W,R,h)$ and every world $w\in W$, we define the notion 
$\c M,w\models\varphi$ of a formula $\varphi$ of
$\Lbig$ being \emph{true at $w$ in $\c M$.}
The definition is by induction on $\varphi$,
as follows:
\begin{enumerate}
\item $\c M,w\models p$ iff $w\in h(p)$, for $p\in\Var$.

\item $\c M,w\models\top$.

\item $\c M,w\models\neg\varphi$ iff $\c M,w\not\models\varphi$.

\item $\c M,w\models\varphi\wedge\psi$ iff $\c M,w\models\varphi$ and $\c M,w\models\psi$.

\item $\c M,w\models\bo\varphi$ iff $\c M,v\models\varphi$ for every
$v\in R(w)$.

\item The truth condition for 
$\bod\varphi$ is exactly the same as for $\bo\varphi$.

\item\label{item: sem tangle kripke} $\c M,w\models\dit\Delta$ iff there
are worlds $w=w_0,w_1,\ldots\in W$ with $R(w_n,w_{n+1})$ for each $n<\omega$
and such that for each $\delta\in\Delta$ there are infinitely many $n<\omega$
with $\c M,w_n\models\delta$.

\item $\c M,w\models\forall\varphi$ iff $\c M,v\models\varphi$
for every $v\in W$.

\item $\c M,w\models\bdf\varphi$ iff $\c M,v\models\varphi$
for every $v\in W\setminus\{w\}$.

\item $\c M,w\models\ang n\varphi$ iff 
$|\{v\in W:\c M,v\models\varphi\}|>n$.

\end{enumerate}
For a set $\Gamma$ of formulas,
we write $\c M,w\models\Gamma$
if $\c M,w\models\gamma$ for every $\gamma\in\Gamma$.

\subsection{Topological semantics}\label{ss:topsem}
Given a topological space $X$, 
an \emph{assignment} (or \emph{valuation}) into $X$ is 
a map $h:\Var\to\wp(X)$.
A \emph{topological model} is a pair $(X,h)$, where
$X$ is a topological space and $h$ an assignment into~$X$.
For every topological model $(X,h)$ and every point $x\in X$, we define 
$(X,h),x\models\varphi$, for a
$\Lbig$-formula $\varphi$, by induction on $\varphi$:
\begin{enumerate}
\item $(X,h),x\models p$ iff $x\in h(p)$, for $p\in\Var$.

\item $(X,h),x\models\top$.

\item $(X,h),x\models\neg\varphi$ iff $(X,h),x\not\models\varphi$.

\item $(X,h),x\models\varphi\wedge\psi$ iff $(X,h),x\models\varphi$ and $(X,h),x\models\psi$.

\item $(X,h),x\models\bo\varphi$ iff 
there is an open \nhd\ $O$ of $x$ with $(X,h),y\models\varphi$ for every
$y\in O$.

\item $(X,h),x\models\bod\varphi$ iff 
there is an open \nhd\ $O$ of $x$ with $(X,h),y\models\varphi$ for every
$y\in O\setminus\{x\}$.  

\item\label{topsem clause 8} For a non-empty 
finite set $\Delta$ of formulas for which we have 
inductively defined semantics,
write $\sem\delta=\{x\in X:(X,h),x\models\delta\}$
for each $\delta\in\Delta$.
Then:

 $(X,h),x\models\dit\Delta$ iff 
there is some $S\subseteq X$ such that
$x\in S\subseteq\bigcap_{\delta\in\Delta}\cl(\sem\delta\cap S)$.

\item $(X,h),x\models\forall\varphi$ iff 
$(X,h),y\models\varphi$ for every $y\in X$.

\item $(X,h),x\models\bdf\varphi$ iff 
$(X,h),y\models\varphi$ for every $y\in X\setminus\{x\}$.

\item $(X,h),x\models\ang n\varphi$ iff  $|\{y\in X:(X,h),y\models\varphi\}|>n$.

\end{enumerate}
Writing $\sem\varphi=\{x\in X:(X,h),x\models\varphi\}$,
we have $\sem{\bo\varphi}=\int(\sem\varphi)$,
$\sem{\di\varphi}=\cl(\sem\varphi)$, and
$\sem{\did\varphi}=\did(\sem\varphi)$ for each $\varphi$.

As with Kripke semantics,
for a set $\Gamma$ of formulas
we write $(X,h),x\models\Gamma$
if $(X,h),x\models\gamma$ for every $\gamma\in\Gamma$.
We say that $\Gamma$ is \emph{satisfiable in $(X,h)$}
if $(X,h),x\models\Gamma$ for some $x\in X$; and 
\emph{satisfiable in $X$}
if it is satisfiable in $(X,h)$ for some assignment $h$ into $X$.
We say that $\Gamma$ is \emph{finitely satisfiable in $(X,h)$ (respectively, $X$)}
if every finite subset of $\Gamma$ is satisfiable in $(X,h)$ (respectively, $X$).
Of course, we say that a formula $\varphi$ is satisfiable in these ways
if $\{\varphi\}$ is so satisfiable.
We write $\Gamma\models_X\varphi$
if  $\Gamma\cup\{\neg\varphi\}$ is not satisfiable in $X$.
For a language $\c L\subseteq\Lbig$,
the \emph{$\c L$-logic of $X$} is the set 
$\{\varphi\in\c L:\emptyset\models_X\varphi\}$.

\subsection{Weaker, stronger, and equivalent languages}\label{ss:trans}

We say that formulas $\varphi,\psi$
are (topologically) \emph{equivalent}
if $(X,h),x\models\varphi\leftrightarrow\psi$ for
every topological model $(X,h)$ and every $x\in X$.
For languages $\c L,\c L'\subseteq\Lbig$,
we say that $\c L$ is \emph{weaker than} $\c L'$,
and $\c L'$ is \emph{stronger than} $\c L$,
if every formula of $\c L$ is equivalent to a formula of $\c L'$.
We say that $\c L$ is \emph{equivalent to} $\c L'$ if $\c L$ is
both weaker and stronger than $\c L'$,
and that $\c L$ is \emph{strictly weaker than} $\c L'$,
and $\c L'$ is \emph{strictly stronger than} $\c L$,
if $\c L$ is weaker but not stronger than $\c L'$.

Some operators of $\Lbig$ can express others.
Clearly, $\bo\varphi$ is 
(topologically) equivalent to $\varphi\wedge\bod\varphi$
and to
$\neg\dit\{\neg\varphi\}$,
and $\forall\varphi$ is equivalent to 
$\neg\ang0\neg\varphi$.
It follows for example that
$\c L^\forall_\bo$, $\c L^{\forall\ang n}_{\bo\bod}$
are weaker than $\c L^{\ang n}_{\bod}$, and in fact the first strictly so.

In the same vein, $\ddf\varphi$ is equivalent to
$(\neg\varphi\to\exists\varphi)\wedge(\varphi\to\ang1\varphi)$,
$\exists\varphi$ is equivalent to $\varphi\vee\ddf\varphi$,
and $\ang1\varphi$ is equivalent to $\exists(\varphi\wedge\ddf\varphi)$.
So we can exchange $\{\forall,\ang1\}$ with $\bdf$,
preserving language equivalence;
and the language $\c L^{\bdf}_\zeta$ is weaker than $\c L^{\ang n}_\zeta$,
for any $\zeta$.

\subsection{Strong completeness}\label{ss:stro compl}

This is the topic of the paper.
We assume familiarity,
e.g., from \cite{gold:tang16} and \cite[\S\S2.10, 2.12, 8.1]{goldhod:tang17:apal}, with 
(modal) deductive systems.
They are Hilbert systems containing, at least, all propositional tautologies
as axioms  and the modus ponens inference rule.
For such a system $\vdash$, a \emph{theorem} of $\vdash$ is a formula $\varphi$
that is provable in $\vdash$, in which case we write $\vdash\varphi$;
for a set $\Sigma$ of formulas,
we write $\Sigma\vdash\varphi$
if there is some finite $\Sigma_0\subseteq\Sigma$
such that $\vdash(\bigwedge\Sigma_0)\to\varphi$;
and $\Sigma$ is said to be ($\vdash$-)\emph{consistent}
if $\Sigma\not\vdash\bot$.
All deductive systems mentioned later in the paper are taken to be of this form.
For such systems, though not for all deductive systems in the world, consistency reduces to a property
of the set of theorems,
and $\Sigma$ is consistent iff each of its finite subsets is consistent.

A deductive system $\vdash$ for a language $\c L\subseteq\Lbig$
is said to be 
\emph{sound} over a topological space $X$ if for every $\c L$-formula $\varphi$, if $\vdash\varphi$ then $\emptyset\models_X\varphi$.
Equivalently, every finitely satisfiable (in $X$) set of $\c L$-formulas is $\vdash$-consistent.
We say that $\vdash$ is \emph{strongly complete} over 
$X$ if for every set $\Sigma\cup\{\varphi\}$ of $\c L$-formulas,
if $\Sigma\models_X\varphi$ then $\Sigma\vdash\varphi$,
and \emph{complete} over $X$ if this holds when $\Sigma$ is finite.
It follows that $\vdash$ is (strongly)
complete over $X$ iff every finite $\vdash$-consistent
set (respectively, every $\vdash$-consistent set) of formulas is satisfiable in $X$.
Recall that $\Var$ is countable,
so we are dealing
always with countable sets of formulas.

For many topological spaces and sublanguages of $\c L_{\bo\bod\dit}$,
strongly complete deductive systems
are known.
\begin{itemize}
\item  \cite{Kremer2010:stro} showed that for $\c L_\bo$, the system {S4} is strongly complete over every \dims.
(It had long been known 
from the work of \cite{McKiT44,mcki:theo48:short} that {S4} is 
sound and complete over every such space.)

\item In the language $\c L_{\bo\dit}$,
the system S4$t$ is sound and strongly complete
over every \dims\ \cite[theorem 9.3(1)]{goldhod:tang17:apal}.

\item In the language $\c L_{\bod}$,
the system KD4G$_1$
is strongly complete over every
\dims, and sound if the space has a property
called `G$_1$' \cite[theorem 9.2]{goldhod:tang17:apal}.

\item The same holds for the system {KD4G$_1t$}
in a language  expanding $\c L_{\bod}$ by the stronger tangle operator
$\didt$ already mentioned 
\cite[theorem 9.1]{goldhod:tang17:apal}.

\item In this latter language, the system KD4\emph{t} is sound and  strongly complete over every 0-\dm al \dims\
\cite[theorem 8.5]{gold:tang16}.

\end{itemize}

\subsection{Compactness}\label{ss:compactness}

For a language $\c L\subseteq\Lbig$ and a topological space $X$,
we say that $\c L$ is \emph{compact over $X$}
if every set of $\c L$-formulas that is finitely satisfiable
in $X$ is satisfiable in~$X$.
Do not confuse this with compactness of the space $X$.

Obviously, if $\c L$ is compact over $X$
then so is every sublanguage of $\c L$,
and every weaker  language.
For example, 
if $\c L^{\ang n}_{\bod}$ is compact over $X$
then so are $\c L^\forall_\bo$, $\c L^{\forall\ang n}_{\bo\bod}$, etc.

Compactness is tightly connected to strong completeness.
The following is well known and easy to prove.
\begin{fact}\label{fact:stro compl = compact}\rm
Let $\vdash$ be a 
deductive system for a language $\c L\subseteq\Lbig$,
and let $X$ be a topological space.
If $\vdash$ is complete over $X$ and $\c L$ is compact over $X$,
then $\vdash$ is strongly complete over $X$.
The converse holds if $\vdash$ is sound over $X$.
\end{fact}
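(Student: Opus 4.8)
The plan is to prove the two implications separately, using only the definitions of soundness, completeness, strong completeness, and compactness as set out in \S\ref{ss:stro compl} and \S\ref{ss:compactness}.

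\medskip\noindent\emph{First implication.} Assume $\vdash$ is complete over $X$ and $\c L$ is compact over $X$; I want to show $\vdash$ is strongly complete over $X$. By the characterisation recorded at the end of \S\ref{ss:stro compl}, it suffices to show that every $\vdash$-consistent set $\Sigma$ of $\c L$-formulas is satisfiable in $X$. So let $\Sigma$ be $\vdash$-consistent. Because $\vdash$ is of the standard Hilbert form (tautologies plus modus ponens, with $\Sigma\vdash\varphi$ meaning $\vdash(\bigwedge\Sigma_0)\to\varphi$ for some finite $\Sigma_0\subseteq\Sigma$), consistency reduces to a property of finite subsets: every finite $\Sigma_0\subseteq\Sigma$ is $\vdash$-consistent. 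For each such $\Sigma_0$, completeness of $\vdash$ over $X$ gives that $\Sigma_0$ is satisfiable in $X$. Hence $\Sigma$ is finitely satisfiable in $X$, and compactness of $\c L$ over $X$ then yields that $\Sigma$ itself is satisfiable in $X$, as required.

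\medskip\noindent\emph{Second implication (converse).} Now assume in addition that $\vdash$ is sound over $X$, and that $\vdash$ is strongly complete over $X$; I want to deduce that $\c L$ is compact over $X$ (that $\vdash$ is complete over $X$ is immediate, since strong completeness specialises to finite $\Sigma$). Let $\Sigma$ be a set of $\c L$-formulas that is finitely satisfiable in $X$. By soundness over $X$ — in the equivalent formulation noted in \S\ref{ss:stro compl}, that every finitely satisfiable set of $\c L$-formulas is $\vdash$-consistent — we get that $\Sigma$ is $\vdash$-consistent. Strong completeness of $\vdash$ over $X$ then says that every $\vdash$-consistent set of formulas is satisfiable in $X$, so $\Sigma$ is satisfiable in $X$. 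Thus $\c L$ is compact over $X$.

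\medskip There is no real obstacle here; the statement is essentially a bookkeeping exercise once the definitions are unwound. The only point needing a little care is the reduction of $\vdash$-consistency to consistency of finite subsets, but this is exactly the remark already made in \S\ref{ss:stro compl} for deductive systems of the assumed form, so it may simply be invoked.
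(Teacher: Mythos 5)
Your proof is correct, and since the paper states this fact without proof (calling it ``well known and easy to prove''), there is nothing to contrast: your argument is exactly the intended one, unwinding the definitions via the two equivalences already recorded in \S\ref{ss:stro compl} (soundness as consistency of finitely satisfiable sets, and (strong) completeness as satisfiability of (finite) consistent sets) together with the remark that consistency of $\Sigma$ reduces to consistency of its finite subsets. No gaps; the appeal to those equivalences is legitimate because the paper asserts them for deductive systems of the assumed Hilbert form.
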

So on the one hand,
where  a complete deductive system is known for a space,
compactness, if available, can be used to show that 
the system is 
actually strongly complete.
Soundness is not required.
This is how the  results of
\cite{gold:tang16,goldhod:tang17:apal} 
mentioned in \S\ref{ss:stro compl}
were proved.

On the other hand, failure of compactness kills any hope
of finding a sound and strongly complete  deductive system.
As we mentioned in \ref{sec:intro},
no deductive system 
for $\c L^\forall_\bo$ is sound and strongly complete
over a compact locally connected \dims\
\cite[corollary 9.5]{goldhod:tang17:apal},
and this was proved using failure of compactness.

This paper is about
strong completeness over 0-\dm al \dimss\ in languages able to express \pa.
Relevant sound and complete deductive systems
were given by \cite{Kud06:topdiff} and \cite{gold:tang16},
and we are therefore interested in determining
which sublanguages of $\Lbig$ are 
{compact} over which 0-\dm al \dimss.
The rest of the paper is devoted to this question,
and the answers are varied and interesting.

\section{Strong completeness with $\forall$ and tangle fails always}

The following is based on an example
in \cite[\S5]{goldhod:tang17:sl} using $\bo$. Here we use $\forall$ instead.

\begin{theorem}\label{thm:stro comp with forall, tangle}
Compactness
fails for the language $\c L^{\forall}_{\dit}$
over every \dims~$X$.
\end{theorem}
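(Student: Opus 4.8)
The plan is to exhibit an infinite set $\Sigma$ of $\c L^{\forall}_{\dit}$-formulas that is finitely satisfiable in $X$ but not satisfiable in $X$. The idea is to force the existence of an infinite strictly descending chain of ``tangled'' closed sets, each properly contained in the previous one, which is impossible in a single point of a topological model because it would require an infinitely descending sequence of distinct points accumulating nowhere, contradicting density-in-itself together with the finiteness built into each approximation. Concretely, I would use a fresh propositional variable $p_n$ for each $n<\omega$ and write formulas saying, roughly, that the $p_n$ are nested ($\forall(p_{n+1}\to p_n)$), that each $p_n$-region is ``perfect from the inside'' in the sense that $\forall(p_n\to\dit\{p_n\})$ (so $\sem{p_n}$ is contained in the tangled closure of itself, i.e.\ is dense-in-itself as a subspace), and that the nesting is strict in a witnessed way --- for instance by adding atoms $q_n$ with $\exists(p_n\wedge q_n)$, $\forall(q_n\to\neg p_{n+1})$, and $\forall(q_n\to\neg q_m)$ for $m\neq n$, so that the $\sem{q_n}$ are pairwise disjoint nonempty sets, each meeting $\sem{p_n}\setminus\sem{p_{n+1}}$.

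The key steps, in order, are: (1) Write down $\Sigma$ precisely as above. (2) \emph{Finite satisfiability in $X$:} given a finite $\Sigma_0\subseteq\Sigma$, only finitely many indices $n$ appear, say $n<N$; since $X$ is a \dims, I can find inside $X$ a strictly decreasing finite chain of nonempty dense-in-itself (indeed, one can take clopen, using 0-dimensionality if one wishes, but mere density-in-itself suffices) subsets $X=A_0\supsetneq A_1\supsetneq\cdots\supsetneq A_N$, pick a point $x_n\in A_n\setminus A_{n+1}$ for $n<N$, set $h(p_n)=A_n$ for $n<N$ and $h(p_n)=A_N$ for $n\geq N$, and $h(q_n)=\{x_n\}$ for $n<N$ (empty otherwise); then verify $(X,h),x\models\Sigma_0$ at any $x\in A_N$, using that each $A_n$, being dense-in-itself, satisfies $A_n\subseteq\cl(A_n\setminus\{y\})$ for every $y$, whence $A_n\subseteq\dit\{p_n\}$ in this model. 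Standard additivity/monotonicity facts about $\cl$ and $\dit$ recalled in \S\ref{ss:top spaces} do the routine checking. (3) \emph{Unsatisfiability in $X$:} suppose $(X,h),x\models\Sigma$. Then the sets $S_n=\sem{p_n}$ form a decreasing $\omega$-chain with each $S_n\subseteq\dit(S_n)$, and the $\sem{q_n}$ are pairwise disjoint nonempty subsets with $\sem{q_n}\subseteq S_n\setminus S_{n+1}$. Picking $x_n\in\sem{q_n}$ gives infinitely many distinct points $x_0,x_1,\ldots$; the derivation of a contradiction is where the real argument lies --- I would chase the tangle semantics to produce, from $x$, an $R$-like sequence within the intersection $\bigcap_n S_n$ or rather show that the strict nesting forces $\bigcap_n S_n$ to have a point witnessing all the $\dit\{p_n\}$ while lying in no $\sem{q_n}$, yet the formulas also pin down enough structure to contradict this; alternatively, mimic \cite[\S5]{goldhod:tang17:sl}, replacing their use of $\bo$ along an $R$-chain by the global reach of $\forall$, so that the ``limit point'' argument that works world-by-world there now works because $\forall$ sees the whole space at once.

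The main obstacle I anticipate is step (3): getting a genuine contradiction rather than merely an infinite descending chain of sets (which by itself is harmless in an infinite space). The delicate point is arranging the formulas so that the $\dit$-clauses, whose topological semantics (clause~\ref{topsem clause 8}) only asks for \emph{some} set $S$ with $x\in S\subseteq\cl(\sem\delta\cap S)$, actually propagate a concrete impossible configuration --- e.g.\ forcing a point that must simultaneously lie in every $S_n$ and be ``new'' at every level. I expect the resolution is to encode, as in the cited $\bo$-example, a well-foundedness violation: add a formula asserting $\forall(p_0\to\dit\{p_0\})$ does the perfection, but crucially also a formula that, read semantically, says the chain $(S_n)_n$ has empty intersection while each $S_n$ is nonempty and the descent is ``continuous'' in a way a single model cannot realize. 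Adapting the exact formulas of \cite{goldhod:tang17:sl} from $\bo$ to $\forall$ --- which is expressly what the paper says it is doing --- should supply the precise gadget, and the switch is clean because every place their proof invokes ``$\bo\psi$ holds at $w$, so $\psi$ holds throughout some neighbourhood of $w$'' can be strengthened to ``$\forall\psi$ holds, so $\psi$ holds throughout $X$,'' which only makes the construction easier.
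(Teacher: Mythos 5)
There is a genuine gap, and it sits exactly where you yourself locate it: step (3). In fact the concrete $\Sigma$ you propose is \emph{satisfiable} in every infinite \dims: pick distinct points $x_0,x_1,\ldots$, set $h(q_n)=\{x_n\}$ and $h(p_n)=\{x_m:m\geq n\}$; then all the nesting, witnessing and disjointness formulas hold, and your ``perfection'' formulas $\forall(p_n\to\dit\{p_n\})$ hold automatically, because a tangle over a single formula is degenerate: $\dit\{\varphi\}$ is equivalent to $\di\varphi$ (given a witness $S$ we get $x\in\cl(\sem\varphi\cap S)\subseteq\cl\sem\varphi$, and conversely $S=\cl\sem\varphi$ works in clause~\ref{topsem clause 8}); this is the same collapse behind the paper's remark that $\bo\varphi$ is equivalent to $\neg\dit\{\neg\varphi\}$. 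So your $\dit$-clauses carry no force, no contradiction is available, and — as you suspect — an infinite strictly descending chain of nonempty definable sets is harmless in an infinite space. Deferring the ``gadget'' to an adaptation of the cited $\bo$-example is deferring the entire content of the theorem.

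The missing idea is to use a tangle with \emph{two} arguments and to assert it \emph{false}, while infinitely many $\forall$/$\di$ formulas jointly (but not finitely) manufacture a witness set for its truth. The paper's $\Sigma$ is $\{\neg\dit\{q,\neg q\},\ p_0\}\cup\{\forall(p_n\to\di p_{n+1}),\ \forall(p_{2n}\to q),\ \forall(p_{2n+1}\to\neg q):n<\omega\}$. If all of it held at $x_0$, then $S=\bigcup_n h(p_n)$ would satisfy $S\subseteq\cl(S\cap h(q))\cap\cl(S\setminus h(q))$ (each $p_n$-point has $p_{n+1}$-points, of the opposite $q$-parity, in every neighbourhood), so by the semantics of $\dit$ every point of $S$, in particular $x_0\in h(p_0)$, satisfies $\dit\{q,\neg q\}$ — contradicting $\neg\dit\{q,\neg q\}$. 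Note also that finite satisfiability in $X$ is obtained in the paper not by a direct construction in $X$ (which your step (2) attempts for your own $\Sigma$) but by exhibiting finite Kripke models validating S4$t$.UC and invoking the completeness of that system over every \dims; if you keep a direct topological construction you must actually verify the alternating $\di$-chain conditions inside $X$, which your nested-sets picture does not address.
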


\begin{proof}
Since $\c L^{\forall}_{\dit}$ can express $\bo\varphi$,
via $\neg\dit\{\neg\varphi\}$, 
we can work in $\c L^{\forall}_{\bo\dit}$.
Fix pairwise distinct atoms $q,p_0,p_1,\ldots\in\Var$, and define
\[
\Sigma=\{\neg\t\{q,\neg q\},p_0,\forall(p_n
\to\di p_{n+1}),\forall(p_{2n}\to q),
\forall(p_{2n+1}\to\neg q):n<\omega\}.
\]
For each $n<\omega$,
the subset $\Sigma_n$ of formulas in $\Sigma$
using atoms $\Vec pn,q$ only
is true at 0 in 
the Kripke model
$M_n=(\{0,\ldots,n\},\leq,h)$,
with $h(p_i)=\{i\}$ for $i\leq n$,
and $h(q)=\{2i:i<\omega,\;2i\leq n\}$.
The frame of $M_n$ validates the axioms of the system
S4$t$.UC of \cite[\S8.1]{goldhod:tang17:apal},
so $\Sigma_n$ is S4$t$.UC-consistent.
Now by \cite[theorem 8.4(2)]{goldhod:tang17:apal},
S4$t$.UC is complete over every \dims, and $\Sigma_n$ is finite, so
$\Sigma_n$ is satisfiable in $X$.
It follows that $\Sigma$ is finitely satisfiable in~$X$.

Suppose for contradiction that $(X,h),x_0\models\Sigma$,
for some assignment $h$ and some $x_0\in X$.
Below, we write $x\models\varphi$ as short for $(X,h),x\models\varphi$.
Let 
\[
S=\bigcup\{h(p_n):n<\omega\}\subseteq X.
\]
We show that $S\subseteq\cl(S\cap h(q))\cap\cl(S\setminus h(q))$.
Let $x\in S$.
Pick $n<\omega$ such that $x\models p_n$.
Suppose that $n$ is even
(the case where it is odd is similar). 
Since $x_0\models\forall(p_{n}\to q)$,
we have $x\in S\cap h(q)$ already,
so certainly $x\in\cl(S\cap h(q))$.
Now  let $O$ be any open \nhd\ of $x$.
Since $x_0\models\forall(p_n\to\di p_{n+1})$,
and $x\models p_n$,
there is $y\in O$ with $y\models p_{n+1}$.
So $y\in S$, and also 
 $y\models\neg q$ as  $x_0\models\forall(p_{n+1}\to\neg q)$ because $n+1$ is odd.
As $O$ was arbitrary, $x\in\cl(S\setminus h(q))$.
As $x$ was arbitrary, 
$S\subseteq\cl(S\cap h(q))\cap\cl(S\setminus h(q))$ as required.

By semantics of tangle (\S\ref{ss:topsem}), every point in $S$ satisfies $\t\{q,\neg q\}$.
Since $x_0\in h(p_0)\subseteq S$, $x_0\models\t\{q,\neg q\}$, 
contradicting that $x_0\models\Sigma$.
\end{proof}

\noindent 
The proof  applies to any language able to 
 express \pa, $\bo$, and $\ang t\{\varphi,\neg\varphi\}$.
The following is immediate via fact~\ref{fact:stro compl = compact}.
\begin{corollary}\label{cor:tangle out}
Let $X$ be a \dims.
No  deductive system for $\c L^{\forall}_{\dit}$
or any stronger language is sound and strongly complete 
over $X$.
\end{corollary}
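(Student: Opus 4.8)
The plan is to derive the corollary from Theorem~\ref{thm:stro comp with forall, tangle} together with the easy ``converse'' half of Fact~\ref{fact:stro compl = compact}. Theorem~\ref{thm:stro comp with forall, tangle} tells us that $\c L^{\forall}_{\dit}$ is not compact over $X$ --- indeed the set $\Sigma$ exhibited there is finitely satisfiable in $X$ but not satisfiable in $X$ --- and Fact~\ref{fact:stro compl = compact} says that any sound and strongly complete deductive system forces its language to be compact over $X$. Combining these gives the non-existence result immediately.

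In more detail, I would argue by contradiction. Suppose $\c L$ is a language with $\c L^{\forall}_{\dit}$ weaker than $\c L$ (the case $\c L=\c L^{\forall}_{\dit}$ included), and suppose $\vdash$ is a deductive system for $\c L$ that is both sound and strongly complete over $X$. By the converse part of Fact~\ref{fact:stro compl = compact}, $\c L$ is compact over $X$. By the observation in \S\ref{ss:compactness} that compactness over $X$ passes to every weaker language, $\c L^{\forall}_{\dit}$ is then compact over $X$. This contradicts Theorem~\ref{thm:stro comp with forall, tangle}. Since $X$ was an arbitrary \dims, the corollary follows.

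There is no real obstacle here; the work has already been done in Theorem~\ref{thm:stro comp with forall, tangle}. The only points worth checking are bookkeeping ones: that ``stronger language'' in the statement of the corollary is exactly the dual of ``weaker language'' used in the compactness remark (true by the definitions in \S\ref{ss:trans}), and that the direction of Fact~\ref{fact:stro compl = compact} we invoke is precisely its stated converse, namely that soundness plus strong completeness implies compactness of the language. One could alternatively give a direct argument by re-running the proof of Theorem~\ref{thm:stro comp with forall, tangle}: the set $\Sigma$ there is $\vdash$-consistent because each finite subset is satisfiable in $X$ hence (by soundness) consistent, yet $\Sigma$ is unsatisfiable in $X$, contradicting strong completeness. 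But routing through Fact~\ref{fact:stro compl = compact} is the cleaner packaging, and also makes transparent why the conclusion extends verbatim to all stronger languages.
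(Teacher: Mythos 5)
Your proposal is correct and matches the paper's own argument, which simply declares the corollary ``immediate via fact~\ref{fact:stro compl = compact}'' together with theorem~\ref{thm:stro comp with forall, tangle} and the remark that compactness passes to weaker languages. Your contradiction argument and the optional direct re-run of the theorem's $\Sigma$ are just explicit versions of the same route.
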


One such stronger language comprises
$\bo$, \pa\, and the modal mu-calculus
 \cite[lemma 4.2]{goldhod:tang17:apal}.

By corollary~\ref{cor:tangle out}, in the presence of \pa\ we can forget about tangle.

\section{Non-compact 0-\dm al spaces with \pa\ and $\bod$}

We now aim to show that $\c L^\forall_{\bod}$ is compact
over every non-compact 0-\dm al \dims.
This will have consequences for strong completeness in the languages
$\c L^\forall_{\bod}$ and $\c L^\forall_{\bo}$.

\subsection{Topology}
We will need some topology.
Fix a \dims\ $(X,d)$.

\begin{fact}\label{fact:from apal}\rm
First we quote some basic results, some of which are true much more generally. They are easy to prove.
\begin{enumerate}
\item\label{open inf} \cite[lemma 5.3]{goldhod:tang17:apal} Every non-empty open subset of $X$ is infinite.

\item If $S\subseteq X$ then $\int S\subseteq S\cap\did S$ and $\cl S=S\cup\did S$.

\item  $\did$ is additive: if $S,T\subseteq X$ then $\did(S\cup T)=\did S\cup\did T$
(as already mentioned).

\item \cite[lemma 5.1(2)]{goldhod:tang17:apal}  
If $N\subseteq X$ has empty interior and $O\subseteq X$ is open, then
$\cl(O\setminus N)=\cl O$.

\end{enumerate}
\end{fact}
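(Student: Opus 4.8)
The plan is to dispatch the four items in order, since the later ones lean on the earlier ones and on facts already recorded in \S\ref{ss:top spaces}.

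For item~\ref{open inf}, I would argue by contradiction. Suppose $U$ is a non-empty \emph{finite} open subset of $X$, say $U=\{x_1,\dots,x_k\}$ with the $x_j$ pairwise distinct. Since $(X,d)$ is a \ms\ it is T2, hence T1, so each singleton $\{x_j\}$ is closed. Then
$\{x_1\}=U\setminus(\{x_2\}\cup\cdots\cup\{x_k\})$
is the intersection of the open set $U$ with the open set $X\setminus(\{x_2\}\cup\cdots\cup\{x_k\})$, and is therefore open. This contradicts the assumption that $X$ is dense in itself. Hence every non-empty open $U$ is infinite.

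For item~2, $\int S\subseteq S$ is immediate from the definition of $\int$. For $\int S\subseteq\did S$, take $x\in\int S$; then $\int S$ is an open \nhd\ of $x$ contained in $S$, and for any open \nhd\ $O$ of $x$ the set $O\cap\int S$ is a non-empty open set, hence infinite by item~\ref{open inf}, so it meets $S\setminus\{x\}$ and thus witnesses $x\in\did S$. For $\cl S=S\cup\did S$, use the \nhd\ description of $\cl$ from \S\ref{ss:top spaces}: if $x\in S$ or $x\in\did S$ then every open \nhd\ of $x$ meets $S$, so $x\in\cl S$; conversely, if $x\in\cl S\setminus S$ then for every open \nhd\ $O$ of $x$ we have $S\cap O\setminus\{x\}=S\cap O\neq\emptyset$, so $x\in\did S$. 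Item~3 (already flagged as known) is equally short: $\supseteq$ is monotonicity of $\did$, and for $\subseteq$, if $x\notin\did S\cup\did T$ then picking open \nhd s $O_1,O_2$ of $x$ with $S\cap O_1\setminus\{x\}=\emptyset$ and $T\cap O_2\setminus\{x\}=\emptyset$, the \nhd\ $O_1\cap O_2$ witnesses $x\notin\did(S\cup T)$, since $(S\cup T)\cap O_1\cap O_2\setminus\{x\}\subseteq(S\cap O_1\setminus\{x\})\cup(T\cap O_2\setminus\{x\})=\emptyset$. Finally, for item~4: $\cl(O\setminus N)\subseteq\cl O$ because $O\setminus N\subseteq O$ and $\cl$ is monotonic and idempotent; for the reverse inclusion let $x\in\cl O$ and let $U$ be an arbitrary open \nhd\ of $x$, so $U\cap O$ is non-empty and open; were it contained in $N$ it would lie in $\int N=\emptyset$, a contradiction, so $U$ meets $O\setminus N$, and as $U$ was arbitrary, $x\in\cl(O\setminus N)$.

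I do not expect a genuine obstacle here: the only place where the standing hypotheses (dense-in-itself, plus T1 which is automatic for \ms s) are really used is item~\ref{open inf}, and through it the inclusion $\int S\subseteq\did S$ of item~2; items~2 (the $\cl$ identity), 3, and 4 hold in arbitrary topological spaces. The small points requiring care are invoking T1 when deleting finitely many points in item~\ref{open inf}, and using non-emptiness of $U\cap O$ together with $\int N=\emptyset$ in item~4.
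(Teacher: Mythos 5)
Your proofs are correct. The paper does not actually prove this fact --- items~1 and~4 are quoted from \cite{goldhod:tang17:apal} and the rest is left as easy --- and your arguments are exactly the standard ones intended, including the accurate observation that the dense-in-itself hypothesis (together with T1, automatic for \ms s) is needed only for item~1 and, through it, for the inclusion $\int S\subseteq\did S$ in item~2, while the remaining identities hold in arbitrary topological spaces.
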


The following will be useful.
For a real number $\varepsilon>0$, we say that a subset $S\subseteq X$ is \emph{$\varepsilon$-sparse} if 
$d(x,y)\geq \varepsilon$ for every distinct $x,y\in S$.
In that case, $\did S=\emptyset$.

\begin{lemma}\label{lem:simple Zn}
Let $\setG\subseteq X$ be open
and let $I$ be a countable index set.
Then there are \pd\ sets $\setI_i\subseteq\setG$ $(i\in I)$ such that
\begin{enumerate}
\item $\did\setI_i=\cl\setG\setminus\setG$ for every $i\in I$,

\item $\setG\cap\did\bigcup_{i\in I}\setI_i =\emptyset$.
\end{enumerate}

\end{lemma}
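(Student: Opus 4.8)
Write $F=\cl\setG\setminus\setG$; since $F=\cl\setG\cap(X\setminus\setG)$ it is closed. If $\setG$ is clopen then $F=\emptyset$ and I would simply take every $\setI_i=\emptyset$ (then $\did\setI_i=\emptyset=F$ and $\setG\cap\did\bigcup_i\setI_i=\emptyset$), so assume $F\neq\emptyset$; in particular $X\setminus\setG\neq\emptyset$, so $d(x,X\setminus\setG)$ is defined and positive for $x\in\setG$. The plan is built around the open sets
\[
U_n=\{x\in\setG:d(x,X\setminus\setG)<1/n\}\qquad(n\geq1),
\]
which are nested decreasing, and which satisfy $F\subseteq\cl U_n$ for every $n$: given $p\in F$ and $\varepsilon>0$, since $p\in\cl\setG$ there is $x\in\setG$ with $d(x,p)<\min(\varepsilon,1/n)$, and then $d(x,X\setminus\setG)\le d(x,p)<1/n$, so $x\in U_n\cap N_\varepsilon(p)$.

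Next I would build, by induction on $n\geq1$, pairwise disjoint $(1/n)$-sparse sets $T_n\subseteq U_n$, each ``dense towards $F$'' in the sense that every point of $F$ is within $2/n$ of $T_n$. At stage $n$, the set $T_1\cup\dots\cup T_{n-1}$ is closed with empty interior: by additivity of $\did$ and $\varepsilon$-sparseness, $\did(T_1\cup\dots\cup T_{n-1})=\emptyset$, so it equals its closure and (by fact~\ref{fact:from apal}(2)) has empty interior. Hence $V_n:=U_n\setminus(T_1\cup\dots\cup T_{n-1})$ is open and, by fact~\ref{fact:from apal}(4), $\cl V_n=\cl U_n\supseteq F$. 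Let $T_n$ be a maximal $(1/n)$-sparse subset of $V_n$ (Zorn's lemma). Then $T_n\subseteq V_n\subseteq U_n$ is disjoint from $T_1,\dots,T_{n-1}$; $\did T_n=\emptyset$ because $T_n$ is $(1/n)$-sparse; and for $p\in F$ there is $x\in V_n$ with $d(p,x)<1/n$, while maximality of $T_n$ gives some $t\in T_n$ with $d(x,t)<1/n$, so $d(p,t)<2/n$.

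Finally I would partition $\{n:n\geq1\}$ into pairwise disjoint infinite sets $N_i$ $(i\in I)$ (trivial if $I$ is empty) and set $\setI_i=\bigcup_{n\in N_i}T_n$. Pairwise disjointness is immediate. For $p\in F$ and $\varepsilon>0$, choosing $n\in N_i$ with $2/n<\varepsilon$ gives $t\in T_n\subseteq\setI_i$ with $d(p,t)<\varepsilon$, so $F\subseteq\cl\setI_i$. For the derivative, I claim that for any infinite $N\subseteq\{n\ge1\}$, $\did\bigl(\bigcup_{n\in N}T_n\bigr)\cap\setG=\emptyset$: if $q$ lay in this derived set and in $\setG$, then, since each finite union $\bigcup_{n\in N,\,n\le M}T_n$ has empty derivative, one finds points $t_M\in T_{n_M}$ with $n_M\in N$, $n_M>M$, and $d(q,t_M)<1/M$; as $t_M\in U_{n_M}$ we get $d(q,X\setminus\setG)<2/M$ for all $M$, forcing $q\in\cl(X\setminus\setG)=X\setminus\setG$, a contradiction. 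Applying this with $N=N_i$ gives $\did\setI_i\cap\setG=\emptyset$; since also $\did\setI_i\subseteq\cl\setI_i\subseteq\cl\setG$, we get $\did\setI_i\subseteq F$, and combined with $F\subseteq\cl\setI_i=\setI_i\cup\did\setI_i$ and $\setI_i\cap F=\emptyset$ this yields $\did\setI_i=F$, i.e.\ clause~(1). Applying the claim with $N=\{n:n\ge1\}$ gives $\setG\cap\did\bigcup_{i\in I}\setI_i=\emptyset$, i.e.\ clause~(2).

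The one genuinely delicate point is making the $T_n$ pairwise disjoint while keeping each of them dense towards $F$; everything else is bookkeeping. This is exactly what the ``subtract the earlier layers'' step handles, and it is legitimate only because removing a set of empty interior from an open set does not change its closure (fact~\ref{fact:from apal}(4)), together with the fact that finite unions of sparse sets are closed with empty interior in a \dims.
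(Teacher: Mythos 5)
Your proof is correct and takes essentially the same route as the paper's: both build a sequence of pairwise disjoint maximal sparse sets inside shrinking open collars of the boundary (you measure distance to $X\setminus\setG$, the paper to $\cl\setG\setminus\setG$, which makes no difference), subtract earlier layers using the fact that sets with empty derivative have empty interior so removing them preserves closures, and finally partition the index set $\omega$ into infinitely many infinite pieces. All the key steps match, including the verification that the union does not accumulate inside $\setG$ and that each infinite subunion accumulates exactly on $\cl\setG\setminus\setG$.
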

Without part 2, this follows from
\cite[theorem 6.1]{goldhod:tang17:apal},
and part 2 can be extracted from the proof of that theorem.
But the lemma is fairly quick to prove, so we prove it here.

\begin{proof}
Write $B=\cl\setG\setminus\setG$.
If $B=\emptyset$, we can take $\setI_i=\emptyset$ for each $i\in I$.
We are done.

Assume now that $B\neq\emptyset$.
Define $\varepsilon_n=1/2^n$ for each $n<\omega$.
We define \pd\ subsets $Z_n\subseteq\setG$ ($n<\omega$),
with $\did Z_n=\emptyset$, by induction as follows.
Let $n<\omega$ and assume inductively that
$Z_m$ has been defined for each $m<n$.
Let 
\[
O_n=\setG\cap N_{\varepsilon_n}(B)\setminus\bigcup_{m<n}Z_m.
\]
Using Zorn's lemma,  
choose $Z_n$ to 
be a maximal $\varepsilon_n$-sparse subset of $O_n$.
As we said, $\did Z_n=\emptyset$, and plainly $Z_n\subseteq\setG$.
This completes the definition of the \pd\ $Z_n$.

We first show that 
\begin{equation}\label{e:Z deriv empty}
\setG\cap\did\bigcup_{n<\omega}Z_n=\emptyset.
\end{equation}
Let $x\in\setG$ be arbitrary, and choose $n<\omega$ so large that 
$N_{2\varepsilon_n}(x)\subseteq\setG$.
Consequently, $d(x,B)\geq 2\varepsilon_n$.
Now for each $m\geq n$ we have $Z_m\subseteq O_m\subseteq N_{\varepsilon_n}(B)$.
If there is $z\in N_{\varepsilon_n}(x)\cap Z_m$,
then $d(x,B)\leq d(x,z)+d(z,B)<\varepsilon_n+\varepsilon_n=2\varepsilon_n$, a contradiction.
So $N_{\varepsilon_n}(x)\cap \bigcup_{m\geq n}Z_m=\emptyset$,
and $x\notin\did\bigcup_{m\geq n}Z_m$.
By fact~\ref{fact:from apal}, $\did\bigcup_{m<n}Z_m=\bigcup_{m<n}\did Z_m=\emptyset$,
so $x\notin\did\bigcup_{m< n}Z_m$ as well.
Hence, $x\notin\did\bigcup_{m< n}Z_m\cup\did\bigcup_{m\geq n}Z_m=\did\bigcup_{m<\omega}Z_m$, proving \eqref{e:Z deriv empty}.

Now let $J\subseteq\omega$ be infinite;
we show that
\begin{equation}\label{e:Z deriv B}
\did\bigcup_{n\in J}Z_n = B.
\end{equation} 
Write $Z=\bigcup_{n\in J}Z_n$.
Certainly, since $Z\subseteq\setG$ we have $\did Z\subseteq \cl\setG$.
By \eqref{e:Z deriv empty} and monotonicity of $\did$, $\setG\cap\did Z=\emptyset$, so $\did Z\subseteq B$.

For the converse, let $b\in B$
and let $\varepsilon>0$ be given.
We will show that $Z\cap N_{\varepsilon}(b)\neq\emptyset$.

Choose $n\in J$ so large that $2\varepsilon_n\leq\varepsilon$.
By fact~\ref{fact:from apal},
$\int\bigcup_{m<n}Z_m\subseteq\did\bigcup_{m<n}Z_m=\emptyset$.
So $\bigcup_{m<n}Z_m$ has empty interior.
By fact~\ref{fact:from apal} again, 
$\cl\setG=\cl(\setG\setminus\bigcup_{m<n}Z_m)$.

Now $b\in\cl\setG$.
So there is $x\in N_{\varepsilon_n}(b)\cap\setG\setminus\bigcup_{m<n}Z_m\subseteq O_n$.
If $Z_n\cap N_{\varepsilon}(b)=\emptyset$,
then
for every $z\in Z_n$ we have 
$d(x,z)\geq d(b,z)-d(b,x)>\varepsilon-\varepsilon_n\geq\varepsilon_n$, 
so $x$ could be added to $Z_n$, contradicting its maximality.
Hence, $Z\cap N_{\varepsilon}(b)\neq\emptyset$, as required.

This holds for every $\varepsilon>0$,
and hence $b\in\cl Z=Z\cup\did Z$ (fact~\ref{fact:from apal}).
Since $Z\subseteq\setG$, we have $b\notin Z$, so $b\in\did Z$.
As $b\in B$ was arbitrary, we obtain $B\subseteq\did Z$, so proving \eqref{e:Z deriv B}.

Now to prove the lemma, simply partition $\omega$
into infinite sets $J_i$ $(i\in I)$
and define $\setI_i=\bigcup_{n\in J_i}Z_n$.
\end{proof}

From now on, assume further that $X$ is 0-\dm al.

\begin{lemma}\label{lem:aiml mainly}
Let $\setG\subseteq X$ be open, and suppose that $Z\subseteq \setG$
and $\setG\cap\did Z=\emptyset$.
Then there is a family $(K(T):T\subseteq Z)$ of subsets
of $\setG$ such that for each $T\subseteq Z$:
\begin{enumerate}
\item $T\subseteq K(T)\subseteq\setG$,

\item if $\c U\subseteq \wp(Z)$ then
$K(\bigcup\c U)=\bigcup_{U\in\c U}K(U)$, and hence $K(\emptyset)=\emptyset$,

\item  if $U\subseteq Z$ and $T\cap U=\emptyset$ 
then $K(T)\cap K(U)=\emptyset$,

\item $K(T)$ is open,

\item $\setG\setminus K(T)$ is open.
\end{enumerate}
\end{lemma}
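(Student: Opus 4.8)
The plan is to build, for each point $z\in Z$, a small clopen "capsule" $C_z\subseteq\setG$ around $z$, chosen so small that these capsules are pairwise disjoint, and then to set $K(T)=\bigcup_{z\in T}C_z$ for each $T\subseteq Z$. With this definition properties 2 and 3 are immediate (unions of capsules, disjoint index sets give disjoint capsules), and property 1 holds provided $z\in C_z\subseteq\setG$ for each $z$. Properties 4 and 5 ask that both $K(T)$ and $\setG\setminus K(T)$ be open; since each $C_z$ is clopen, $K(T)$ is certainly open, and $\setG\setminus K(T)$ will be open provided the family $\{C_z:z\in Z\}$ is \emph{locally finite in $\setG$} — more precisely, provided every point of $\setG$ has an open neighbourhood meeting only finitely many (ideally at most one) of the capsules. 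So the real content is to produce such a locally finite clopen "capsule system."

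First I would use 0-dimensionality together with the hypothesis $\setG\cap\did Z=\emptyset$ to choose the capsules. The hypothesis says exactly that no point of $\setG$ is a limit point of $Z$; equivalently, every $x\in\setG$ has an open neighbourhood $O_x\subseteq\setG$ with $O_x\cap Z\subseteq\{x\}$. For each $z\in Z$ this gives an open $O_z\subseteq\setG$ with $O_z\cap Z=\{z\}$, and by 0-dimensionality I may shrink $O_z$ to a clopen (in $X$) neighbourhood; call it $C_z^0$. The sets $C_z^0$ need not be pairwise disjoint, but each contains only the one point of $Z$, namely $z$. To get genuine disjointness and local finiteness, I would pass to the metric: for each $z\in Z$, since $\setG$ is open and $Z\cap\did Z\cap\setG=\emptyset$, pick $\varepsilon_z>0$ with $N_{2\varepsilon_z}(z)\subseteq\setG$ and $N_{2\varepsilon_z}(z)\cap Z=\{z\}$, and then choose a clopen set $C_z$ with $z\in C_z\subseteq C_z^0\cap N_{\varepsilon_z}(z)$. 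Alternatively, and perhaps more cleanly, one can invoke paracompactness of the metric space $\setG$ directly: the open cover $\{C_z^0:z\in Z\}\cup\{\setG\setminus Z\}$ of $\setG$ has a locally finite open refinement, which after intersecting with the $C_z^0$ and using 0-dimensionality again can be turned into a locally finite clopen family $\{C_z:z\in Z\}$ with $z\in C_z\subseteq C_z^0\subseteq\setG$ and $C_z\cap Z=\{z\}$; disjointness of the $C_z$ then follows because $C_z\cap C_{z'}$ is an open subset of $\setG$ missing $Z$ except possibly at $z$ and at $z'$, and a genuine shrinking argument (or a second application of local finiteness) removes even that overlap.

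Given a pairwise disjoint, locally finite clopen family $\{C_z:z\in Z\}$ with each $z\in C_z\subseteq\setG$, set $K(T)=\bigcup_{z\in T}C_z$. Then: property 1 is clear; property 2 is the distributivity of union over the index sets, with $K(\emptyset)=\emptyset$; property 3 follows from pairwise disjointness of the capsules; property 4 holds since each $C_z$ is open; and for property 5, given $x\in\setG\setminus K(T)$, local finiteness gives an open $V\ni x$ with $V\subseteq\setG$ meeting only finitely many $C_z$, say those with $z\in F$ for finite $F\subseteq Z$; then $V\setminus\bigcup_{z\in F\cap T}C_z$ is an open neighbourhood of $x$ (using that the finitely many $C_z$ are closed) contained in $\setG\setminus K(T)$, so $\setG\setminus K(T)$ is open. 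The main obstacle I anticipate is arranging the capsule family to be \emph{simultaneously} pairwise disjoint, clopen, and locally finite in $\setG$: local finiteness is what delivers property 5 and it is not automatic from $\setG\cap\did Z=\emptyset$ alone (that only gives that $Z$ is closed and discrete in $\setG$), so I expect the argument to lean essentially on paracompactness of the metric space $\setG$ (Stone's theorem, already cited in the paper) combined with 0-dimensionality to restore clopenness after refining.
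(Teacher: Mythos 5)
Your plan is correct and is essentially the paper's proof: cover $\setG$ by open sets meeting $Z$ in at most one point, use paracompactness of the metric subspace $\setG$ (Stone) to get a locally finite open refinement, use 0-dimensionality to pick clopen capsules around the points of $Z$, disjointify by subtraction, set $K(T)$ to be the union of the capsules for $T$, and get property 5 from local finiteness plus closedness of finite (or locally finite) unions of the capsules. The only detail you leave vague --- how the shrinking keeps the capsules clopen --- is handled in the paper by choosing each clopen capsule $K^+(z)$ inside a member of the refinement \emph{and} meeting only finitely many members of it, so that the set subtracted from $K^+(z)$ is a finite, hence clopen, union.
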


\begin{proof}
If $Z=\emptyset$, define $K(\emptyset)=\emptyset$; we are done.
So assume from now on that $Z$, and hence $\setG$, are non-empty,
so that $\setG$ is a subspace of $X$.
Since $\setG\cap\did Z=\emptyset$,
it follows that  $\c O^+=\{Q\subseteq\setG:Q$ open, $|Q\cap Z|\leq1\}$ is an open cover of the subspace $\setG$.
This subspace, being a metric space, is paracompact 
--- see \cite{S48} or \cite[5.1.3]{Engelking89}.
So there is a locally finite open cover $\c O$ of $\setG$
that refines  $\c O^+$.
Evidently,
\begin{equation}\label{e:int leq1}
|O\cap Z|\leq1\mbox{ for every }O\in\c O.
\end{equation}

For each $z\in Z$, use 0-\dm ality to
choose a clopen \nhd\ $K^+(z)$ of $z$ contained in some $O\in\c O$ 
and disjoint from all but finitely many sets in $\c O$.
Since $z\in K^+(z)$, it follows from~\eqref{e:int leq1}
that each $O\in\c O$ contains at most one set $K^+(z)$.
Since $K^+(z)$ intersects only finitely many sets in $\c O$,
it therefore intersects only finitely many $K^+(t)$  $(t\in Z\setminus\{z\})$.
The union of these finitely many sets is clopen, so the set
 \[
K(z)=K^+(z)\setminus\bigcup_{t\in Z\setminus\{z\}}K^+(t)
\]
is clopen.  It also follows from~\eqref{e:int leq1} that $K^+(z)$ is the only $K^+(t)$
that contains $z$; so $z\in K(z)$. 

For each $T\subseteq Z$ define $K(T)=\bigcup_{t\in T}K(t)$.
We prove the lemma under this definition.
Items 1 and 2 are trivial. 
Item 3 holds because  the $K(z)$ $(z\in Z)$ are plainly \pd.
Item 4~holds because by definition, $K(T)$ is a union of open sets.
For item 5, see \cite[20.4--5]{Will70}, or prove it directly as follows.
Each $x\in \setG\setminus K(T)$
has an open \nhd\ $U$ such that $\{O\in\c O:U\cap O\neq\emptyset\}$ is finite,
and hence also
$\{t\in T:U\cap K(t)\neq\emptyset\}$ is finite.
Since a finite union of sets $K(t)$ is closed, and $x\notin K(T)$, the set $U\setminus K(T)$ is 
an open \nhd\ of $x$.
It follows that $\setG\setminus K(T)$ is open.
\end{proof}

The following is the first  result needed later, and is where
non-compactness comes in.

\begin{theorem}\label{thm:inf partition noncompact}
 $X$ is not compact iff $X$ can be partitioned into infinitely many non-empty open sets.
\end{theorem}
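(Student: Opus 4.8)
The plan is to prove the two directions separately, since they are very asymmetric. For the (easy) ``if'' direction, suppose $X=\bigcup_{i\in I}U_i$ with $I$ infinite and the $U_i$ non-empty, open and \pd. Then $\{U_i:i\in I\}$ is an open cover of $X$ with no finite subcover: any finite $F\subseteq I$ omits some index $j\in I$, and a point of $U_j$ then lies in no $U_i$ with $i\in F$, by disjointness. Hence $X$ is not compact. (Note that dense-in-itself is not used anywhere in this theorem.)

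For the ``only if'' direction I would invoke the classical fact that a \ms\ is compact iff it is both complete and totally bounded, and split into two cases according to which property fails; in both cases 0-\dm ality is used to supply clopen \nhd s wherever they are needed. \emph{Case 1: $X$ is not totally bounded.} Then for some $\varepsilon>0$ one builds greedily an infinite $\varepsilon$-sparse set $\{x_n:n<\omega\}$. Choose a clopen $C_n$ with $x_n\in C_n\subseteq N_{\varepsilon/4}(x_n)$ for each $n$; then any $\varepsilon/4$-ball meets at most one $C_n$, so the family $\{C_n:n<\omega\}$ is discrete and $U:=\bigcup_{n<\omega}C_n$ is clopen. If $U=X$, the $C_n$ already form the required partition; otherwise replace $C_0$ by the clopen set $C_0\cup(X\setminus U)$ and keep the other $C_n$, again giving a partition of $X$ into infinitely many non-empty clopen sets.

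\emph{Case 2: $X$ is totally bounded but not complete.} Being totally bounded, $X$ is separable, hence Lindel\"of. Fix a Cauchy sequence $(y_n)$ with no limit in $X$ and set $f(x)=\lim_n d(x,y_n)$; this is well defined, $1$-Lipschitz (hence continuous), everywhere positive (else $y_n\to x$ for some $x$), and has infimum $0$ (since $f(y_m)\to0$ as $m\to\infty$). Cover $X$ by clopen sets $C_x$ with $x\in C_x\subseteq f^{-1}\bigl((f(x)/2,\infty)\bigr)$, extract a countable subcover $\{C_{x_n}:n<\omega\}$ by the Lindel\"of property, and disjointify it into \pd\ clopen sets $D_n=C_{x_n}\setminus\bigcup_{m<n}C_{x_m}$. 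Discarding the empty $D_n$ leaves a partition of $X$ into non-empty clopen (hence open) sets, and it must be infinite: otherwise $X$ would be a finite union of the $C_{x_n}$, on each of which $f$ is bounded below by a positive constant, contradicting $\inf_X f=0$.

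The step I expect to be the main obstacle is making the blocks of the partition genuinely \emph{open}. Starting from any concrete witness to non-compactness one naturally obtains only a family of clopen sets whose union can miss a closed ``remainder'' that itself need not be open; this remainder can be absorbed into a single block precisely when the clopen family is locally finite (as in Case~1, via $\varepsilon$-sparseness) or when a countable subcover is available (as in Case~2, via the Lindel\"of property), and securing one of these is what forces the case split above. The rest is routine bookkeeping.
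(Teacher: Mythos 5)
Your proof is correct, but the forward direction takes a genuinely different route from the paper's. The paper gets non-compactness into play via the fact (Engelking 3.10.3) that a non-compact space has an infinite subset $Z$ with $\did Z=\emptyset$, and then simply applies lemma~\ref{lem:aiml mainly} with $\setG=X$: that lemma (whose proof uses paracompactness of metric spaces plus 0-\dm ality to shrink a locally finite refinement to \pd\ clopen sets $K(z)$ with $X\setminus K(Z)$ open) hands over the partition $\{K(\{z\}):z\in Z\}\cup\{X\setminus K(Z)\}$ in one line, and is in any case needed later (e.g.\ for corollary~\ref{cor:0dim partition} and theorem~\ref{thm:0-dim dissection}), so the theorem comes essentially for free. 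You instead use the classical characterisation ``compact iff complete and totally bounded'' and split into cases: in the non-totally-bounded case an $\varepsilon$-sparse set with clopen $N_{\varepsilon/4}$-\nhd s gives a discrete clopen family whose union is clopen, with the remainder absorbed into one block; in the incomplete case the $1$-Lipschitz function $f(x)=\lim_n d(x,y_n)$ attached to a divergent Cauchy sequence, together with separability (hence the Lindel\"of property) of a totally bounded space, yields a countable clopen cover on which $f$ is locally bounded away from $0$, which you disjointify and show must have infinitely many non-empty pieces since $\inf f=0$. Both arguments are sound; yours is self-contained and avoids Stone's paracompactness theorem and the machinery of lemmas~\ref{lem:simple Zn} and~\ref{lem:aiml mainly}, at the cost of a case analysis and of proving a statement that, unlike the paper's lemma, is not reusable for the later dissection results. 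Your observation that dense-in-itselfness is not needed matches the paper, which also uses only 0-\dm ality (and metrisability) here.
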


\begin{proof}
$\Leftarrow$ is obvious.
For $\To$, assume that $X$ is not compact.
By \cite[3.10.3]{Engelking89},
there is an infinite subset $Z\subseteq X$
with $\did Z=\emptyset$.
Taking $\setG$ in  lemma~\ref{lem:aiml mainly} to be $X$,
the lemma tells us that
$X$ is partitioned into the  \pd\ open sets
$K(\{z\})$ $(z\in Z)$ and $X\setminus K(Z)$.
The non-empty sets among these
(all but perhaps $X\setminus K(Z)$) form the required partition.
\end{proof}

\noindent By grouping sets together, an infinite partition into open sets can 
be `coarsened' into a partition into any finite number of open sets.

The next corollary is similar. Cf.~\cite[theorem 7.5]{gold:tang16}.
\begin{corollary}\label{cor:0dim partition}
Let $\setG\subseteq X$ be open, and $I$ be non-empty and countable.
Then $\setG$ can be partitioned into open sets $\setG_i$ $(i\in I)$
such that
$\cl(\setG)\setminus\setG=\cl(\setG_i)\setminus\setG_i$ for each $i\in I$.
\end{corollary}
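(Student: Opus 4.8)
The plan is to derive Corollary~\ref{cor:0dim partition} from Lemma~\ref{lem:simple Zn} and Lemma~\ref{lem:aiml mainly}. Write $B=\cl\setG\setminus\setG$. The target is a partition of $\setG$ into open sets $\setG_i$ ($i\in I$), each with $\cl(\setG_i)\setminus\setG_i=B$. The idea is that $\did$ of each piece should be exactly $B$ (since the pieces are subsets of $\setG$, which is open, their closures lie in $\cl\setG$, and "closure minus itself" will equal $\did$ restricted appropriately); so I want each $\setG_i$ to be open with $\did\setG_i=B$, and together they should cover $\setG$ disjointly.

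First I would apply Lemma~\ref{lem:simple Zn} to the open set $\setG$ and the countable index set $I$, obtaining pairwise disjoint sets $\setI_i\subseteq\setG$ with $\did\setI_i=B$ for each $i$ and $\setG\cap\did\bigcup_{i\in I}\setI_i=\emptyset$. Put $Z=\bigcup_{i\in I}\setI_i$; then $Z\subseteq\setG$ and $\setG\cap\did Z=\emptyset$, so Lemma~\ref{lem:aiml mainly} applies and gives a family $(K(T):T\subseteq Z)$ with the five listed properties. Now define, for each $i\in I$, the set $\setG_i=K(\setI_i)$, except that I reserve one distinguished index $i_0\in I$ (using that $I$ is non-empty) and instead set $\setG_{i_0}=K(\setI_{i_0})\cup(\setG\setminus K(Z))$. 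By item 2 of Lemma~\ref{lem:aiml mainly}, $K(Z)=\bigcup_{i\in I}K(\setI_i)$, and by item 3 the $K(\setI_i)$ are pairwise disjoint (the $\setI_i$ are), so the $\setG_i$ partition $\setG$: everything in $K(Z)$ lands in exactly one $K(\setI_i)$, and the leftover open set $\setG\setminus K(Z)$ is glued onto the $i_0$ piece. Each $\setG_i$ is open: $K(\setI_i)$ is open by item 4, and $\setG\setminus K(Z)$ is open by item 5, so their union is too.

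It remains to check $\cl(\setG_i)\setminus\setG_i=B$ for every $i$. Since $\setG_i\subseteq\setG$, we have $\cl(\setG_i)\subseteq\cl\setG$, and because $\setG_i$ is open, $\cl(\setG_i)\setminus\setG_i=\cl(\setG_i)\setminus\int\setG_i$; more usefully, for an open set $O$, $\cl O\setminus O=\did O\setminus O$ and in fact $\cl O=O\cup\did O$ (fact~\ref{fact:from apal}), so $\cl(\setG_i)\setminus\setG_i=\did\setG_i\setminus\setG_i$. Now $\setI_i\subseteq\setG_i\subseteq\setG$, so by monotonicity $B=\did\setI_i\subseteq\did\setG_i\subseteq\did\setG\subseteq\cl\setG$; and since $\setG\cap\did\bigcup_j\setI_j=\emptyset$ kills $\did$ inside $\setG$, actually I need $\did\setG_i\cap\setG=\emptyset$ — but $\setG_i$ need not be a subset of $Z$, so I should argue: $\did\setG_i\subseteq\did\setG\subseteq\cl\setG=\setG\cup B$, and a point of $\setG$ in $\did\setG_i$ would have to be in $\did\setG$; this is where I must be careful, since $\setG$ itself can have nonempty derivative inside itself. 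The clean fix is to use part 1 of Lemma~\ref{lem:simple Zn} differently: argue $\did\setG_i=\cl\setG_i\setminus\setG_i$ lies in $\cl\setG\setminus\setG=B$ by noting $\setG_i$ is relatively clopen in... no — instead, observe $\setG_i$ is open and $\setG\setminus\setG_i$ is open (items 4 and 5 give both $K(T)$ and $\setG\setminus K(T)$ open, and for the $i_0$ piece one checks the complement $\setG\setminus\setG_{i_0}=K(Z\setminus\setI_{i_0})\cap\dots$ is still open), so $\setG_i$ is clopen in the subspace $\setG$, whence $\cl\setG_i\cap\setG=\setG_i$ and therefore $\cl(\setG_i)\setminus\setG_i\subseteq\cl\setG\setminus\setG=B$. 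Combined with $B\subseteq\did\setG_i\setminus\setG_i\subseteq\cl(\setG_i)\setminus\setG_i$ this yields equality. The main obstacle, then, is precisely this verification that each $\setG_i$ is clopen in $\setG$ (so that its closure adds nothing inside $\setG$) — in particular handling the modified $i_0$ piece, for which I would write $\setG\setminus\setG_{i_0}=\bigcup_{i\neq i_0}K(\setI_i)$, manifestly open as a union of open sets by item 4.
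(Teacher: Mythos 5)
Your proposal is correct and follows essentially the same route as the paper: apply lemma~\ref{lem:simple Zn} to get the sets $\setI_i$, feed $Z=\bigcup_i\setI_i$ into lemma~\ref{lem:aiml mainly}, take $\setG_i=K(\setI_i)$ with one distinguished piece absorbing $\setG\setminus K(Z)$ (your $\setG_{i_0}$ is literally the paper's $\setG\setminus K(Z\setminus\setI_{i_0})$), and prove $\cl\setG_i\setminus\setG_i=\cl\setG\setminus\setG$ via $\setI_i\subseteq\setG_i$ for one inclusion and openness of $\setG\setminus\setG_i$ (equivalently, of the union of the other pieces) for the other. Despite the mid-proof detour about $\did\setG_i$, your final argument is sound and matches the paper's.
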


\begin{proof}
By lemma~\ref{lem:simple Zn}, we can select \pd\ sets $\setI_i\subseteq\setG$ for $i\in I$, with $\did \setI_i=\cl\setG\setminus\setG$ for every  $i\in I$, and
$\setG\cap\did Z=\emptyset$, where $Z=\bigcup_{i\in I}\setI_i$.
Choose sets $K(T)\subseteq\setG$  (for $T\subseteq Z$)  
as in lemma~\ref{lem:aiml mainly}.
Fix any $i_0\in I$.
For each $i\in I$ let 
\[
\setG_i=
\begin{cases}
K(\setI_i),&\mbox{if }i\neq i_0,
\\
\setG\setminus\bigcup_{j\in I\setminus\{i_0\}}\setG_j
=\setG\setminus K(Z\setminus\setI_{i_0}),&\mbox{if }i=i_0.
\end{cases}
\]
By lemma~\ref{lem:aiml mainly}, the
$\setG_i$ are \pd\ open subsets of $\setG$,
and they plainly partition~$\setG$.

Let $i\in I$.
We check that $\cl\setG\setminus\setG=\cl\setG_i\setminus\setG_i$.
Notice that  $\setI_i\subseteq\setG_i$, even when $i=i_0$.
So 
$\cl\setG\setminus\setG=\did \setI_i\subseteq\cl\setG_i$.
Since $\cl\setG\setminus\setG$ is disjoint from $\setG$ and hence also from $\setG_i$,
we obtain $\cl\setG\setminus\setG\subseteq\cl\setG_i\setminus\setG_i$.

Conversely,
of course $\setG_i\subseteq\setG$,
so $\cl\setG_i\subseteq\cl\setG$.
Now  $\bigcup_{j\in I\setminus\{i\}}\setG_j$ is open and disjoint from $\setG_i$,
so it is also disjoint from $\cl\setG_i$.
Hence, $\cl\setG_i\setminus\setG_i$
is disjoint from $\setG_i\cup\bigcup_{j\in I\setminus\{i\}}\setG_j=\setG$.
We obtain $\cl\setG_i\setminus\setG_i\subseteq\cl\setG\setminus\setG$ as required.
\end{proof}

Now we come to the second result needed later.
The first part is equivalent to Tarski's well-known  `dissection theorem'
(\cite{Tar38}, later strengthened in \cite{McKiT44}),
except that $I$ can be infinite. The 
second part is distinctively 0-\dm al:
for example, the theorem 
can fail when $X=\R$ and $|I|\geq3$.
The third part harks back to the `$\varepsilon$~clause' 
in~\cite[lemma 4.3]{Kremer2010:stro}.

\begin{theorem}\label{thm:0-dim dissection}
For any 
non-empty countable set $I$
and any $\varepsilon>0$,
any non-empty open subset
$\setG\subseteq X$ can be partitioned into 
a non-empty set $\setB$ and 
(necessarily non-empty) open sets $\setG_i$
($i\in I$)  such that
\begin{enumerate}
\item $\displaystyle
\cl(\setG)\setminus\bigcup_{i\in I}\setG_i
=\cl\setB=\cl\setG_i\setminus\setG_i$ for each $i\in I$,

\item $\setG\cap\did\setB=\emptyset$,

\item  $d(x,\setB)<\varepsilon$ for every $x\in\setG$.

\end{enumerate}
\end{theorem}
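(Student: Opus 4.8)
The plan is to peel a `frontier set' $\setB$ off $\setG$ so that the three conditions involving $\setB$ hold, and then partition the open remainder $V:=\setG\setminus\setB$ using corollary~\ref{cor:0dim partition}. Write $B=\cl\setG\setminus\setG$. First I would construct $\setB$ in two independent pieces. Lemma~\ref{lem:simple Zn}, applied with a one-element index set, gives a set $\setI\subseteq\setG$ with $\did\setI=B$. Separately, by Zorn's lemma choose a maximal $\varepsilon$-sparse subset $W$ of $\setG$; it is non-empty (as $\setG\neq\emptyset$), satisfies $\did W=\emptyset$ (being $\varepsilon$-sparse), and by maximality $d(x,W)<\varepsilon$ for every $x\in\setG$. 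Put $\setB=\setI\cup W$. Then $\setB$ is a non-empty subset of $\setG$, part~3 holds since $d(x,\setB)\le d(x,W)<\varepsilon$ on $\setG$, and additivity of $\did$ gives $\did\setB=\did\setI\cup\did W=B$, so that $\setG\cap\did\setB=\setG\cap B=\emptyset$ (part~2). Fact~\ref{fact:from apal} now yields $\int\setB\subseteq\setB\cap\did\setB\subseteq\setG\cap B=\emptyset$, so $\setB$ has empty interior, and $\cl\setB\cap\setG=(\setB\cup\did\setB)\cap\setG=\setB$, so $\setB$ is closed in the subspace $\setG$.

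Next, since $\setB$ is closed in $\setG$ and $\setG$ is open in $X$, the set $V=\setG\setminus\setB$ is open; and since $\setB$ has empty interior, fact~\ref{fact:from apal} gives $\cl V=\cl\setG$, so $V\neq\emptyset$ and $\cl V\setminus V=\cl\setG\setminus(\setG\setminus\setB)=B\cup\setB=\cl\setB$. I would then apply corollary~\ref{cor:0dim partition} to the non-empty open set $V$ and the index set $I$ to get a partition of $V$ into open sets $\setG_i$ $(i\in I)$ with $\cl\setG_i\setminus\setG_i=\cl V\setminus V$ for every $i$. Then $\setB$ and the $\setG_i$ partition $\setG$, and part~1 follows immediately: $\cl\setG\setminus\bigcup_{i\in I}\setG_i=\cl\setG\setminus V=B\cup\setB=\cl\setB$, while $\cl\setG_i\setminus\setG_i=\cl V\setminus V=B\cup\setB$ for each $i$. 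Finally, each $\setG_i$ is non-empty, since $\setG_i=\emptyset$ would force $\cl\setG_i\setminus\setG_i=\emptyset\neq\cl\setB$.

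I expect the only real content to be the construction of $\setB$ --- in particular, arranging $\did\setB=B$ \emph{exactly}, not merely $\did\setB\subseteq B$, since it is this equality that makes the three sets in part~1 coincide. The mechanism is that lemma~\ref{lem:simple Zn} supplies the piece $\setI$ accumulating precisely onto $B$, while the $\varepsilon$-sparse $W$ contributes $\varepsilon$-density and nothing to the derivative. Everything after that is routine closure bookkeeping with fact~\ref{fact:from apal} and a single appeal to corollary~\ref{cor:0dim partition}.
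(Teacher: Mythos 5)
Your proposal is correct and follows essentially the same route as the paper's proof: a maximal $\varepsilon$-sparse set (giving non-emptiness, the $\varepsilon$-density clause, and empty derivative) is united with the set $\setI$ from lemma~\ref{lem:simple Zn} (with singleton index set) to form $\setB$ with $\did\setB=\cl\setG\setminus\setG$, after which the open remainder $\setG\setminus\setB$ is split by corollary~\ref{cor:0dim partition}. The closure bookkeeping and the non-emptiness argument for the $\setG_i$ match the paper's in substance.
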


\begin{proof}
Using Zorn's lemma, choose a maximal $\varepsilon$-sparse set $Z\subseteq\setG$.
Then 
$\did Z=\emptyset$,  $Z$ is non-empty (since any singleton 
subset of $\setG$ is $\varepsilon$-sparse), and
$d(x,Z)<\varepsilon$ for every $x\in\setG$ (else $x$ could be added to $Z$,
contradicting its maximality).

Now use lemma~\ref{lem:simple Zn} with $I$ a singleton
to choose $\setI\subseteq\setG$
such that
$\did\setI=\cl\setG\setminus\setG$,
and define $\setB=\setI\cup Z\subseteq\setG$.
Then 
\[
\did\setB=\did\setI\cup\did Z=\cl\setG\setminus\setG.
\]
Since $Z\subseteq\setB$, we have $\setB\neq\emptyset$ and
$d(x,\setB)<\varepsilon$ for every $x\in\setG$.
So parts 2 and 3 hold.

Let $\setG'=\setG\setminus \setB$.
Note that
$\did\setB$ is disjoint from $\setG$.
So by fact~\ref{fact:from apal},
$\setG'=\setG\setminus(\setB\cup\did\setB)=\setG\setminus\cl\setB$,
which  is open;  
$\int\setB\subseteq\setB\cap\did\setB=\emptyset$, so $\setB$ has empty interior;
hence
$\cl\setG'=\cl\setG$.
We now  use corollary~\ref{cor:0dim partition}
to partition $\setG'$
into open sets $\setG_i$ $(i\in I)$
with $\cl\setG_i\setminus\setG_i=\cl\setG'\setminus\setG'$ for each $i\in I$.
Then
\[
\cl\setG\setminus\bigcup_{i\in I}\setG_i
=\cl\setG\setminus\setG'=
\begin{cases}
(\cl\setG\setminus\setG)\cup(\setG\setminus\setG')
=\did \setB\cup\setB=\cl\setB,
\\
\cl\setG'\setminus\setG'=\cl\setG_i\setminus\setG_i\mbox{ for each }i\in I.
\end{cases}
\]
Each $\setG_i$ is non-empty since $\setB\subseteq\cl\setG_i$.
This proves part 1, and we are done.
\end{proof}

\penalty-400

\subsection{Logic}

\def\lad{\c L^{\forall}_{\bod}}

\begin{theorem}\label{thm:stro compl noncompact}
The language $\lad$ is compact over 
every non-compact 0-\dm al \dims.

\end{theorem}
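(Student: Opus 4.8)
The plan is to prove compactness of $\lad$ over a non-compact 0-\dm al \dims\ $X$ by a step-by-step satisfaction argument in the style of Kremer's proof of strong completeness of S4. So let $\Sigma$ be a countable set of $\lad$-formulas that is finitely satisfiable in $X$; I want to build an assignment $h$ and a point $x_0\in X$ with $(X,h),x_0\models\Sigma$. The first move is to pass to a maximal finitely satisfiable set: enumerate all $\lad$-formulas and greedily add each formula or its negation, keeping finite satisfiability; this yields a maximal $\Sigma^+\supseteq\Sigma$ such that every finite subset is satisfiable in $X$, $\Sigma^+$ is deductively closed under propositional reasoning, and (crucially) it behaves like a maximal consistent set with respect to $\forall$ and $\did$: if $\forall\varphi\notin\Sigma^+$ then $\did\neg\varphi\in\Sigma^+$, etc. Since $\forall$ is in the language, $\Sigma^+$ determines a single `theory' that should hold globally, and the task is to realise it at one point (equivalently, everywhere, respecting the $\forall$-type).

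The core of the construction is an inductive labelling of $X$ by finite approximations, using the 0-\dm al dissection theorem~\ref{thm:0-dim dissection} at each stage to handle the $\bod$-modality, exactly where non-compactness is needed. I would enumerate the `existential requirements' coming from $\Sigma^+$: for each subformula of the form $\did\psi$ (i.e.\ $\neg\bod\neg\psi$) that we decide to be true somewhere, and for each point we have already committed to satisfying $\did\psi$, we must provide, arbitrarily close to that point, witnesses satisfying $\psi$. Following Kremer, one builds a decreasing sequence of open sets (or a tree of open `cells') where at stage $n$ we take some open cell $\setG$ currently labelled with a type demanding $\did\psi$, and apply theorem~\ref{thm:0-dim dissection} with a small $\varepsilon_n$ and a countable index set $I$ (one index per finite satisfiable extension of the relevant type, or per required witness formula) to carve $\setG$ into a `boundary' piece $\setB$ and open subcells $\setG_i$; part~3 ($d(x,\setB)<\varepsilon$) guarantees the boundary is dense enough in $\setG$ that the $\did$-witnesses genuinely accumulate at every point, part~1 ($\cl\setG_i\setminus\setG_i=\cl\setB$) keeps the closure relationships under control so that formulas $\bod\chi$ true on $\setG$ remain consistently extendable into each $\setG_i$ and onto $\setB$, and part~2 ($\setG\cap\did\setB=\emptyset$) ensures the boundary points themselves do not spuriously become limit points inside $\setG$. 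The assignment $h$ is then read off: $x\in h(p)$ iff the final type assigned to $x$ contains $p$. One checks by induction on formula complexity that $(X,h),x\models\varphi \iff \varphi$ is in the type of $x$ — the $\bo,\bod$ cases use that cells are open and the closure bookkeeping from theorem~\ref{thm:0-dim dissection}, and the $\forall$ case uses that every point's type is an extension of the single global $\forall$-type carried by $\Sigma^+$.

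I expect the main obstacle to be the bookkeeping that keeps the construction both \emph{exhaustive} and \emph{coherent}: exhaustive in that every point of $X$ ends up with a fully-decided $\lad$-type and every $\did$-demand is met with witnesses accumulating at the right points and at every scale $\varepsilon_n\to0$; coherent in that the nested open cells and their boundaries never force a contradiction, i.e.\ the `current type' of a cell always remains finitely satisfiable as we refine. Getting the witnesses to accumulate requires interleaving the stages so that each cell is revisited cofinally and each requirement is eventually serviced — a standard but delicate priority/dovetailing argument. A secondary subtlety is that a single point may lie in infinitely many nested cells and also be reached as a boundary point; one must argue that its type stabilises and that the closure conditions from part~1 of theorem~\ref{thm:0-dim dissection} make the truth lemma go through at such points. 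The compactness of the space $X$ is nowhere assumed — on the contrary, theorem~\ref{thm:inf partition noncompact} (via theorem~\ref{thm:0-dim dissection}, which rests on maximal $\varepsilon$-sparse sets being infinite and hence on non-compactness through \cite[3.10.3]{Engelking89}) is exactly what lets us split every cell into \emph{infinitely many} open subcells, giving enough room to realise all the branching types demanded by $\Sigma^+$.
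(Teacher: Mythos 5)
Your overall architecture (Kremer-style refinement of open cells using theorem~\ref{thm:0-dim dissection}, an assignment read off from types, and a truth lemma by induction) matches the paper's, but you replace the paper's first step by a direct Lindenbaum/Henkin construction on maximal finitely satisfiable sets, and it is exactly there, plus at the limit points, that the proposal has real gaps. First, you misplace the role of non-compactness. Theorem~\ref{thm:0-dim dissection} does \emph{not} rest on non-compactness (maximal $\varepsilon$-sparse sets may well be finite, and the dissection theorem holds in the Cantor set too); non-compactness is used exactly once, via theorem~\ref{thm:inf partition noncompact}, to partition the \emph{whole space} $X$ into infinitely many non-empty open pieces at the root of the construction. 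This root partition is what lets one realise infinitely many pairwise incompatible ``existential'' demands of the global $\forall$-type (all the $\exists\psi$ that must be witnessed somewhere), and a ``decreasing sequence of open sets'' cannot do that. Since compactness of $\lad$ actually \emph{fails} over the Cantor set (theorem~\ref{thm:str compl fails cantor [d],A}), any proof that does not isolate this one use of non-compactness is not yet a proof: your $\forall$-case, as described (``every point's type extends the global $\forall$-type''), only gives one direction and says nothing about how the $\exists$-requirements are distributed over $X$.

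Second, the delicate points are left unresolved. You do not say what type the boundary points of $\setB(\setG)$ receive; in the paper they carry exactly the type of the cell $\setG$ (property $(\ddag)$), and both directions of the $\forall$- and $\bod$-cases of the truth lemma hinge on this. More seriously, for a point $x$ lying in infinitely many nested cells you claim its type ``stabilises''; in general it does not, since the labels along a branch must keep changing to serve the $\did$-demands. The paper instead takes an ultrafilter $D_x$ on the branch $\c E(x)$ and defines $\Gamma_x$ as the limit type, and the $\bod$-case at such $x$ is then the hardest step of the whole proof, combining the $\varepsilon$-clause $d(x,\setB(\setG))<1/2^{n+1}$ with the fact that nearby boundary points have \emph{finite} $\c E(y)$ and hence already-verified types. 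Finally, the Henkin coherence you assert (that $\bod$-formulas propagate to child types together with transitivity, that each $\did\psi$-demand has a maximal finitely satisfiable witness type, and that the $\forall$-fragment is constant across all types) needs proof; the paper gets all of it for free by invoking completeness of KD4U over $X$ and taking a countable serial transitive Kripke model $\c M,w_0\models\Sigma$, then labelling the cells by worlds via a p-morphism. Your route could probably be made to work, but as written these are the missing ideas rather than bookkeeping.
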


\begin{proof}
We adopt a broadly similar approach to \cite{Kremer2010:stro}, and extend it to 
handle $\forall$
and $\bod$.
Fix a non-compact 0-\dm al \dims\
$X$ and a set $\Sigma$ of $\lad$-formulas
that is finitely satisfiable in $X$.
We show that $\Sigma$ is satisfiable in $X$.

\paragraph{Step 1.}
By the argument of \cite[theorem 8.4]{gold:tang16}
and the comments after it,
in the language $\lad$ 
the system KD4U is sound and complete over $X$.
Since $\Sigma$  is finitely satisfiable in $X$, it is KD4U-consistent.
Hence, using the canonical model and the downward L\"owenheim--Skolem theorem,
which are standard modal techniques,
we can find a countable Kripke model
$\c M=(W,R,h)$ whose frame $(W,R)$ validates KD4 and so is serial and transitive, and $w_0\in W$, such that $\c M,w_0\models\Sigma$.
(The U axioms are used in obtaining $\c M$.)

\paragraph{Step 2.}
We now define by induction on $n<\omega$ a set $\c G_n$ of \pd\ non-empty open subsets of $X$, and a `labeling' map $\lambda_n:\c G_n\to W$.

Since $X$ is not compact,
we can use theorem~\ref{thm:inf partition noncompact} to
partition it into \pd\ non-empty open sets $O_w$ $(w\in W)$.
We define $\c G_0=\{O_w:w\in W\}$ and
$\lambda_0(O_w)=w$ for each $w\in W$.
Since the $O_w$ are \pd, $\lambda_0$ is well defined.

Let $n<\omega$
and suppose inductively that $\c G_n,\lambda_n$ have been defined.
Let $\setG\in\c G_n$, and suppose that $\lambda_n(\setG)=u$, say.
Use theorem~\ref{thm:0-dim dissection} to partition $\setG$
into  non-empty open sets $\setG_w$ ($w\in R(u)$)
and a non-empty set $\setB(\setG)$
with 
\begin{itemize}
\item $\cl\setG\setminus\bigcup_{w\in R(u)}\setG_w=\cl\setB(\setG)=\cl\setG_w\setminus\setG_w$
for each $w\in R(u)$,

\item  $\setG\cap\did\setB(\setG)=\emptyset$,

\item  $d(x,\setB(\setG))<1/2^{n+1}$ for every $x\in\setG$.

\end{itemize}
We can apply the theorem here because the frame $(W,R)$ is serial and
so $R(u)\neq\emptyset$.
Let $\c G_{n+1}=\{\setG_w:\setG\in\c G_n,\;w\in R(\lambda_n(\setG))\}$.
Also define $\lambda_{n+1}:\c G_{n+1}\to W$ by $\lambda_{n+1}(\setG_w)=w$.
This is well defined, because the elements of $\c G_n$
are \pd, so each $\setG_w$ gets into $\c G_{n+1}$ in only one way.

That completes the definition of the $\c G_n,\lambda_n$.
Let $\c G=\bigcup_{n<\omega}\c G_n$
and $\lambda=\bigcup_{n<\omega}\lambda_n$.
Then $(\c G,\supset)$ is a forest (that is, a disjoint union of trees)
with roots the $O_w$ and
whose branches all have height $\omega$.
Also, since $R$ is transitive, 
it follows that $\lambda:(\c G,\supset)\to (W,R)$
is a surjective p-morphism.

\paragraph{Step 3.}
For each $x\in X$, let $\c E(x)=\{\setG\in\c G:x\in\setG\}$.
This is either a branch of the forest $(\c G,\supset)$, or a
 finite initial
segment of such a branch.
It is non-empty, since there is $w\in W$ with $x\in O_w$, and then $O_w\in\c E(x)$.

Select an \uf\ $D_x$ on $\c E(x)$ as follows.
If $\c E(x) $ is finite, 
its $\subseteq$-minimal element is $\bigcap\c E(x)$, and we let $D_x$  be the principal
\uf\ 
$\{S\subseteq\c E(x):\bigcap\c E(x)\in S\}$.
If $\c E(x) $ is infinite, we let $D_x$  be any non-principal \uf\ on $\c E(x)$.
Now let
\[
\Gamma_x=\big\{\varphi\in\lad:\{\setG\in\c E(x):
\c M,\lambda(\setG)\models\varphi\}\in D_x\big\}.
\]
Observe that \begin{itemize}
\item [$(\dag)$] every $\varphi\in\Gamma_x$ is true in $\c M$
at some world of the form $\lambda(\setG)$ for some $\setG\in\c E(x)$,

\item  [$(\ddag)$]
if  $\setG\in\c G$ and $x\in\setB(\setG)$, then
$\bigcap\c E(x)=\setG$
and $\Gamma_x=\{\varphi\in\lad:\c M,\lambda(\setG)\models\varphi\}$.

\end{itemize}

\paragraph{Step 4.}
Define an assignment $g$ into $X$
by $g(p)=\{x\in X:p\in \Gamma_x\}$,
for each atom $p\in\Var$.

\paragraph{Step 5.}
We now prove a `truth lemma':
that for every $\varphi\in\lad$,
we have $\varphi\in\Gamma_x$ iff $(X,g),x\models\varphi$ for each $x\in X$.

The proof is by induction on $\varphi$.
For  $\varphi\in\Var$ it holds by definition of $g$,
and the boolean cases
(including $\top$) follow from the fact that every $D_x$ is an \uf.

For the remaining cases, assume the result for $\varphi$ inductively, and let $x\in X$ be given.

For the case $\forall\varphi$,
if $\forall\varphi\in\Gamma_x$
then by $(\dag)$,
$\forall\varphi$ is true at some world of $\c M$,
so $\varphi$ is true at every world of $\c M$.
It follows from the definition of $\Gamma_y$ that $\varphi\in\Gamma_y$,
 and inductively that
$(X,g),y\models\varphi$, for every $y\in X$.
So $(X,g),x\models\forall\varphi$.

Conversely, suppose that $(X,g),x\models\forall\varphi$.
Let $w\in W$ be given.
Choose any $y\in\setB(O_w)$.
Then $(X,g),y\models\varphi$,
so inductively, $\varphi\in\Gamma_y$.
By $(\ddag)$ and because $\lambda(O_w)=w$, we get $\c M,w\models\varphi$.
As $w$ was arbitrary, we get $\c M,w\models\forall\varphi$ for every $w\in W$.
It is now immediate from the definition of $\Gamma_x$ that
$\forall\varphi\in\Gamma_x$.

Finally we consider the case $\bod\varphi$.
Suppose first that $\bod\varphi\in\Gamma_x$.
By $(\dag)$,
there is $\setG\in\c E(x)$ 
with $\c M,\lambda(\setG)\models\bod\varphi$.
Then for every $y\in\setG\setminus\setB(\setG)$,
the set $S=\{\setG'\in\c E(y):\setG'\subsetneq\setG\}$
is in $D_y$ by choice of $D_y$.
Also, every $\setG'\in S$ 
satisfies
$R(\lambda(\setG),\lambda(\setG'))$ as $\lambda$
is a p-morphism (again we need transitivity of $R$ here), 
and so $\c M,\lambda(\setG')\models\varphi$ by Kripke semantics.
So $\varphi\in\Gamma_y$ by definition of $\Gamma_y$,
and inductively, $(X,g),y\models\varphi$,
for every $y\in\setG\setminus\setB(\setG)$.

Now $x\in\setG$.
If $x\notin\setB(\setG)$, then $\setG\setminus\setB(\setG)$ is already
an open \nhd\ of $x$ all of whose elements satisfy $\varphi$.
If $x\in\setB(\setG)$, then
recalling that $\setG\cap\did\setB(\setG)=\emptyset$,
we can find an open \nhd\ $O$ of $x$ with $O\subseteq\setG$
and $O\cap\setB(\setG)=\{x\}$.
By the above, $(X,g),y\models\varphi$ for every $y\in O\setminus\{x\}$.
Either way, we have shown that $(X,g),x\models\bod\varphi$.

Conversely, suppose that $(X,g),x\models\bod\varphi$.
So there is $\varepsilon>0$ such
that $(X,g),y\models\varphi$ 
for every $y\in N_\varepsilon(x)\setminus\{x\}$.
We show that $\bod\varphi\in\Gamma_x$.

Suppose first that $\c E(x)$ is finite, with least element 
$\bigcap\c E(x)=\setG$, say.
Then $x\in\setB(\setG)$,
so by $(\ddag)$ it suffices to show
$\c M,\lambda(\setG)\models\bod\varphi$.
Accordingly, take any $w\in R(\lambda(\setG))$.
We show that $\c M,w\models\varphi$.
Now 
\[
x\in\setB(\setG)
\subseteq\cl\setB(\setG)
=\cl\setG_w\setminus\setG_w
\subseteq\cl\setG_w\setminus\bigcup_{u\in R(w)}(\setG_w)_u
=\cl\setB(\setG_w).
\]
And $x\notin\setB(\setG_w)$ since $\setB(\setG)$ is disjoint from $\setG_w$.
So there is $y\in \setB(\setG_w)\cap N_\varepsilon(x)\setminus\{x\}$.
For such a $y$ we have $(X,g),y\models\varphi$,
so inductively, $\varphi\in\Gamma_y$,
and by $(\ddag)$ we obtain $\c M,w\models\varphi$ since $\lambda(\setG_w)=w$.
We are done.

Now suppose instead that $\c E(x)$ is infinite.
Let
\[
S=\c E(x)\cap\bigcup\{\c G_n:0<n<\omega,\;1/2^n<\varepsilon\},
\]
a cofinite subset of $\c E(x)$.
Pick arbitrary $\setG\in S$.
We show that $\c M,\lambda(\setG)\models\bod\varphi$.
Suppose $\setG\in\c G_n$.
By choice of $\setB(\setG)$
we have $d(x,\setB(\setG))<1/2^n<\varepsilon$.
Now $x\notin\setB(\setG)$ since $\c E(x)$ is infinite.
So there is $y\in \setB(\setG)\cap N_\varepsilon(x)\setminus\{x\}$.
Then $N_\varepsilon(x)\setminus\{x\}$
is an open \nhd\ of $y$, and every $z\in N_\varepsilon(x)\setminus\{x\}$
satisfies $(X,g),z\models\varphi$.
So $(X,g),y\models\bod\varphi$.
Since $y\in\setB(\setG)$, $\c E(y)$ is finite,
so by the proof above we have 
$\c M,\lambda(\setG)\models\bod\varphi$ as required.

We have shown that
each $\setG\in S$
satisfies $\c M,\lambda(\setG)\models\bod\varphi$.
Since $S$ is cofinite in $\c E(x)$, it is certainly in $D_x$,
and it follows by definition of $\Gamma_x$ that $\bod\varphi\in\Gamma_x$ as required.

\paragraph{Step 6.}
Recall that $\c M,w_0\models\Sigma$.
Take any  $x\in \setB(O_{w_0})$.
By $(\ddag)$,
$\Sigma\subseteq\Gamma_x$, so by step 5 (the truth lemma)  above, 
$(X,g),x\models\Sigma$.
So $\Sigma$ is satisfiable in $X$.
\end{proof}

\begin{corollary}\label{cor:str compl noncomp}
Let $X$ be a non-compact 0-\dm al \dims.
In the language $\lad$,
the system KD4U is sound and strongly complete
over $X$.
In the weaker language $\c L^\forall_\bo$,
the system S4U is sound and strongly complete
over $X$.
\end{corollary}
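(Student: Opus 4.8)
The plan is simply to feed Theorem~\ref{thm:stro compl noncompact} into Fact~\ref{fact:stro compl = compact}, so almost no further work is needed. Soundness is not at issue: KD4U is sound over every 0-\dm al \dims\ (e.g.\ by \cite{gold:tang16}), and S4U is sound over every topological space under the interior semantics, hence over $X$. Ordinary (non-strong) completeness over $X$ is also already on the table: in $\lad$, KD4U is complete over $X$ by \cite[theorem~8.4]{gold:tang16} and the comments after it --- this is exactly what was used in Step~1 of the proof of Theorem~\ref{thm:stro compl noncompact} --- and in $\c L^\forall_\bo$, S4U is complete over $X$ by the same source. So all that remains is to produce compactness of the relevant language over $X$.

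For $\lad$, compactness over $X$ is precisely the statement of Theorem~\ref{thm:stro compl noncompact}. Hence Fact~\ref{fact:stro compl = compact} immediately yields that KD4U is strongly complete over $X$ in $\lad$, giving the first assertion. For the second assertion, recall from \S\ref{ss:trans} that $\bo\varphi$ is topologically equivalent to $\varphi\wedge\bod\varphi$, so $\c L^\forall_\bo$ is weaker than $\lad$; and, as noted at the start of \S\ref{ss:compactness}, compactness passes to weaker languages, so $\c L^\forall_\bo$ is compact over $X$ as well. A second application of Fact~\ref{fact:stro compl = compact}, now to S4U, gives that S4U is strongly complete over $X$ in $\c L^\forall_\bo$.

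Since the real content is already contained in Theorem~\ref{thm:stro compl noncompact}, there is no genuine obstacle here; the only points needing a moment's care are bookkeeping ones. First, one should check that the cited completeness results of \cite{gold:tang16} really do apply to this particular (indeed, to an arbitrary) 0-\dm al \dims\ and to these exact axiom systems KD4U and S4U. Second, one should confirm that the transfer of compactness from $\lad$ to its sublanguage $\c L^\forall_\bo$ is legitimate, which it is by the elementary observations in \S\ref{ss:compactness}. If one preferred to avoid invoking completeness of S4U directly, one could instead observe that, $\bo$ being definable from $\bod$ via $\varphi\wedge\bod\varphi$, completeness of KD4U over $X$ already pins down the $\c L^\forall_\bo$-logic of $X$, which is routinely seen to be axiomatised by S4U, and then argue as before.
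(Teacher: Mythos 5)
Your proposal is correct and follows essentially the same route as the paper: the paper likewise cites soundness and completeness of KD4U and S4U over all 0-dimensional dense-in-themselves metric spaces from \cite{gold:tang16}, and then combines theorem~\ref{thm:stro compl noncompact} with fact~\ref{fact:stro compl = compact}, with compactness of $\c L^\forall_\bo$ inherited from the stronger language $\lad$. Your extra remarks (explicitly noting the transfer of compactness to the weaker language, and the alternative of deriving the S4U case via definability of $\bo$ from $\bod$) are just bookkeeping refinements of the same argument.
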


\begin{proof}
S4U and KD4U are outlined in \S\ref{ss:intro of here}
and defined fully in, e.g., \cite{gold:tang16}
and \cite[\S8.1]{goldhod:tang17:apal}.
As shown in the former (in particular by theorem 5.1, the argument of theorem 8.4,
and the discussion following it),
they are sound and complete over every 0-\dm al \dims\ in their respective languages.
The corollary now follows by theorem~\ref{thm:stro compl noncompact}
and fact~\ref{fact:stro compl = compact}.
\end{proof}

\section{Cantor set}\label{sec:cantor set}
In the preceding section
we  proved strong completeness of the system KD4U in the language $\lad$ over
every non-compact 0-\dm al \dims.
Actually, this covers all 0-\dm al \dimss\
 except one --- the \emph{Cantor set.}
 
The Cantor set is,
up to homeomorphism, the
unique compact 0-\dm al \dims\
(see \cite{Brouw1910} or \cite[29.5, 30.4]{Will70}).
As a topological space, 
it is the Stone space of the countable atomless \ba\
(see \cite[theorem 6.6 and text after corollary 7.7]{BellSlom69} or
 \cite[example 7.24]{Kopp89}).

In this section we show that, over the Cantor set, compactness
fails  for $\c L_{\bod}^{\forall}$
--- in surprising contrast to non-compact spaces ---
but holds for $\lbc$.
Compactness for the weaker
languages $\c L^\forall_\bo$ and $\c L_\bo^{\bdf}$ follows immediately,
and here we also obtain  strong completeness results.
We have none for $\lbc$ itself only because we do not know the
logic of the Cantor set in this language.

\subsection{Strong completeness fails  with $\bod,\forall$}

We start by observing that the results for non-compact
 spaces of the preceding section cannot be replicated for the Cantor set.

\begin{theorem}\label{thm:str compl fails cantor [d],A}
Let $X$ be an infinite compact 
T1 topological space (such as the Cantor set).
The 
language $\c L_{\bod}^\forall$ is not compact over $X$.
Hence,
in $\c L_{\bod}^\forall$ or any stronger language, no  deductive system is
sound and strongly complete over $X$.
\end{theorem}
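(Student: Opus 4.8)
\textbf{Proof proposal for Theorem~\ref{thm:str compl fails cantor [d],A}.}
The plan is to exhibit an explicit set of $\c L^\forall_{\bod}$-formulas that is finitely satisfiable in $X$ but not satisfiable in $X$; the second sentence of the theorem then follows immediately from fact~\ref{fact:stro compl = compact}, since any sound and strongly complete system would force the set to be satisfiable. The formulas should say, in effect, ``there is an infinite strictly $R$-descending (or discrete) sequence of points,'' which a finite approximation can always realise in an infinite T1 space but which, in a compact space, is blocked because every infinite set has a limit point (by the equivalence of compactness with the statement that every infinite subset has a point of accumulation, for the spaces in play --- or directly via \cite[3.10.3]{Engelking89} as used in the proof of theorem~\ref{thm:inf partition noncompact}).

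Concretely, I would fix pairwise distinct atoms $p_0,p_1,\ldots\in\Var$ and take
\[
\Sigma=\{\exists p_0\}\cup\{\forall(p_n\to\did p_{n+1}):n<\omega\}\cup\{\forall(p_n\to\bod\neg p_m):m\le n<\omega\}.
\]
Here $\forall(p_n\to\did p_{n+1})$ says every $p_n$-point has $p_{n+1}$-points arbitrarily near it (in its deleted neighbourhoods), and $\forall(p_n\to\bod\neg p_m)$ for $m\le n$ says $p_n$-points are isolated from the earlier $p_m$-sets, forcing the sets $h(p_n)$ to be genuinely distinct and their union to be an infinite set with no isolated points of the wrong kind. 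For finite satisfiability: given a finite $\Sigma_0\subseteq\Sigma$ mentioning only $p_0,\ldots,p_N$, pick any $N{+}1$ distinct points $x_0,\ldots,x_N\in X$ (possible since $X$ is infinite) and set $h(p_n)=\{x_n\}$; since $X$ is T1, singletons are closed, so $\did\{x_n\}$ and the $\bod$-conditions behave correctly, and $\forall(p_n\to\did p_{n+1})$ holds vacuously or because... --- here one must be slightly careful: with singletons $h(p_n)=\{x_n\}$ the formula $\forall(p_n\to\did p_{n+1})$ requires $x_n\in\did\{x_{n+1}\}=\emptyset$, which fails. So the finite models need $h(p_n)$ to be a small ball around $x_n$, or better, I would instead use a Kripke-style finite model and invoke completeness of KD4U (theorem 8.4 of \cite{gold:tang16}) over $X$ to transfer satisfiability of each finite $\Sigma_0$ to $X$, exactly as in the proof of theorem~\ref{thm:stro comp with forall, tangle}: each $\Sigma_0$ is true at the root of a finite serial transitive Kripke model (a descending chain of length $N{+}1$ with a reflexive loop at the bottom, labelled so $p_n$ holds exactly at level $n$), hence is KD4U-consistent, hence satisfiable in $X$ by completeness.

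For unsatisfiability in $X$: suppose $(X,h),x\models\Sigma$. From $\exists p_0$ and repeated use of $\forall(p_n\to\did p_{n+1})$ one builds, for each $n$, a nonempty set $h(p_n)$, and the conditions $\forall(p_n\to\bod\neg p_m)$ for $m<n$ ensure $h(p_n)\cap h(p_m)=\emptyset$; moreover the $\did$-clauses force each $h(p_n)$ to meet every neighbourhood of each point of $h(p_{n-1})$. Choosing a point $y_n\in h(p_n)$ for each $n$ yields an infinite set $Y=\{y_n:n<\omega\}$ of distinct points. The $\bod\neg p_m$ conditions force $Y$ to be relatively discrete (each $y_n$ has a deleted neighbourhood missing all $y_m$ with $m\le n$, and, chasing the descending structure, eventually all of $Y$), so $\did Y=\emptyset$ --- contradicting compactness of $X$, since an infinite subset of a compact space must have a limit point. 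The \textbf{main obstacle}, and the step to get exactly right, is designing $\Sigma$ so that (i) the infinite ``tower'' really is forced, (ii) the resulting witness set $Y$ provably has empty derived set in $X$ from the $\bod$-axioms alone, and (iii) each finite fragment is genuinely satisfiable --- which is why routing finite satisfiability through the known completeness theorem for KD4U, rather than through hand-built topological models, is the cleanest path.
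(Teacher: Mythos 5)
There is a genuine gap, and it is fatal to the unsatisfiability half of your argument: your $\Sigma$ is in fact satisfiable in the Cantor set, so it does not witness failure of compactness at all. Nothing in your formulas asserts that the relevant infinite set has empty derivative. The clauses $\forall(p_n\to\did p_{n+1})$ force accumulation, while $\forall(p_n\to\bod\neg p_m)$ for $m\le n$ only say that a $p_n$-point is not an accumulation point of $h(p_0)\cup\dots\cup h(p_n)$; they say nothing about the later sets, and nothing about where chosen witnesses $y_n$ accumulate. Concretely, in $\setC$ put $h(p_0)=\{x_0\}$ and, inductively, let $h(p_{n+1})$ consist of one sequence of fresh points converging to each $y\in h(p_n)$, chosen so that $\did h(p_{n+1})=h(p_n)\cup\did h(p_n)$. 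Then the $h(p_n)$ are pairwise disjoint, every $y\in h(p_n)$ lies in $\did h(p_{n+1})$ and outside $\did h(p_m)$ for all $m\le n$, so every formula of $\Sigma$ holds at every point of $\setC$. Your step ``chasing the descending structure, eventually all of $Y$'' is exactly where the argument breaks: $\did Y=\emptyset$ is not derivable (one may choose $y_{n+1}$ within $2^{-n}$ of $y_n$, making $Y$ convergent), and even the pairwise disjointness of the $h(p_n)$ is not forced, since a point can satisfy $p_m\wedge p_n$ and still satisfy $\bod\neg p_m$.

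The finite-satisfiability half is also not available at the stated level of generality. The theorem concerns an arbitrary infinite compact T1 space $X$, and over such spaces KD4U is in general neither complete nor even sound (the D axiom $\did\top$ fails at isolated points), so you cannot transfer Kripke satisfiability of $\Sigma_0$ to $X$; worse, your $\Sigma$ is genuinely not finitely satisfiable in, e.g., the one-point compactification of a countable discrete space, where $\did S\subseteq\{\infty\}$ for every $S$, so already $\{\exists p_0,\ \forall(p_0\to\did p_1),\ \forall(p_1\to\did p_2)\}$ is unsatisfiable. The repair is to make the ``no accumulation'' requirement explicit in $\Sigma$, which is what the paper does: it takes $\Sigma=\{\exists(q\wedge p_i\wedge\bigwedge_{j<i}\neg p_j):i<\omega\}\cup\{\forall\neg\did q\}$, so that any model makes $h(q)$ infinite with empty derivative, contradicting compactness of $X$ (every infinite subset of a compact space has an accumulation point), while each finite fragment is satisfied by assigning finitely many distinct points to $q$ and singletons to the $p_i$, using only that finite sets in a T1 space have empty derivative; no modal completeness theorem is needed anywhere.
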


\begin{proof}
We write down $\c L_{\bod}^\forall$-formulas saying that 
the valuation of an atom is infinite but has empty derivative.
Let $p_0,p_1,\ldots,q\in\Var$ be pairwise distinct, and let
\[
\Sigma=\{\exists(q\wedge p_i\wedge\bigwedge_{j<i}\neg p_j):i<\omega\}
\cup\{\forall\neg\did q\}.
\]
Any finite subset of $\Sigma$ is satisfiable in $X$:
if the subset involves only $\Vec pn,q$, 
choose pairwise distinct points
$\Vec xn\in X$, assign each $p_i$ to $\{x_i\}$, and  $q$ to $\{\Vec xn\}$.
No point satisfies $\did q$, since 
in a T1 space, every finite set has empty derivative (and conversely).

Suppose for contradiction that $\Sigma$ as a whole were satisfiable in $(X,h)$ for some assignment
$h$ into $X$.
For each $i<\omega$ pick $x_i\in X$
with $(X,h),x_i\models q\wedge p_i\wedge\bigwedge_{j<i}\neg p_j$.
The $x_i$ are plainly pairwise distinct,
and hence $h(q)$ is infinite.
Since $X$ is compact, by \cite[3.10.3]{Engelking89}
every infinite subset of $X$ has non-empty derivative.
So there is $x\in\did h(q)$, and therefore $(X,h),x\models\did q$,
contradicting the truth of $\forall\neg\did q$ in $(X,h)$.

So $\c L_{\bod}^\forall$ is not compact over $X$,
proving the first part of the theorem.
The second part follows by fact~\ref{fact:stro compl = compact}.
\end{proof}

The proof really needs $\did$: using $\forall\neg\di q$
in $\Sigma$ instead loses finite satisfiability,
since even $\{\exists q,\forall\neg\di q\}$ is not satisfiable.
In theorem~\ref{thm:stro compl cantor bo A} we will show that
the result needs $\did$ too.

\subsection{Compactness holds with $\bo,\ang n$ for $n<\omega$}

Replacing $\bod$ by the weaker connective $\bo$, we have more success.
In fact, we will prove compactness for $\lbc$ over the Cantor set. 
Our proof uses a third kind of compactness --- in \fo\ logic.
Every consistent set of \fo\ sentences has a model.

\subsubsection{Two-sorted \fo\ \str s}\label{ss:2sorted}

To formulate topological models in \fo\ logic,
we introduce a two-sorted \fo\ signature~$L$.
It has a `point' sort and a `set' sort,
so $L$-\str s have the form $M=(X,B)$,
where $X$ is the set of elements of $M$ of point sort,
and $B$ is the set of elements of set sort.
The symbols of $L$ comprise a binary relation symbol $\in$ 
relating points to sets,
`boolean' function symbols $+$ (binary) and $-$ (unary), and constants $0,1$, all acting on the set sort,
and a unary relation symbol $P$ of point sort for each $p\in\Var$.
For convenience, we also include in $L$ a  point-sorted constant $k$.
As usual, we write $s^M$ for the interpretation of a symbol $s$ of $L$ in 
an $L$-\str\ $M$.
We use $x,y,z,\ldots$ for point-sorted variables
(and also by abuse for point-sorted elements),
and $b,c,o,O,\ldots$ for set-sorted variables
(and also by abuse for elements of set sort).

Given an $L$-\str\ $M=(X,B)$,
for each $b\in B$ we let
$\check b=\{x\in X:M\models x\in b\}\subseteq X$.
It may be that $\check b=\check c$ for distinct $b,c\in B$, but this will not happen in our applications.

We can view a topological model as an $L$-\str\ as follows.
Let $X$ be a  0-\dm al topological space
and write $\clop(X)$ for the set of all clopen subsets of $X$.
This is a base for the topology on $X$, and 
$(\clop(X),\cup,\unarycomplement,\emptyset,X)$ is a boolean set algebra.
Let $h:\Var\to \wp(X)$ be an assignment.
Then the topological model $(X,h)$
can be turned into a two-sorted $L$-\str\ $(X,h)^{(2)}=M$, say, 
where $M$ has the form $(X,\clop(X))$,
$\in$
is interpreted in $M$ as ordinary set membership,
the boolean operations are interpreted 
as $b+c=b\cup c$, $-b=X\setminus b$, $0=\emptyset$, and $1=X$,
the constant $k$ has arbitrary interpretation in $X$, and $P^M=h(p)$ for each $p\in\Var$.
The \str\ $(X,h)^{(2)}$ is not unique: it depends on the
interpretation of $k$.
Each $b\in \clop(X)$ is both a set-sorted element of $M$
and a set of point-sorted elements of $M$, and
by definition of $\in^M$ we have $b=\check b\subseteq X$.
So we often do not need to write $\check b$ 
when dealing with `concrete' \str s like this
(the proof of lemma~\ref{lem: C good} is an example).

Conversely, given an $L$-\str\ $M=(X,B)$,
we endow $X$ with the topology generated by $\check B=\{\check b:b\in B\}$.
Define an assignment $h:\Var\to\wp(X)$
by $h(p)=P^M\subseteq X$ for each $p\in\Var$.
We end up with a topological model $M^{(1)}=(X,h)$, where $X$ is the topological space just defined.
Plainly, if $X$ is 0-\dm al  then $((X,h)^{(2)})^{(1)}=(X,h)$ for any $h$.

\subsubsection{Standard translation}\label{ss:std tr}
Every $\lbc$-formula $\varphi$
has a `standard translation' to an $L$-formula $\varphi^x$,
for any \fo\ variable $x$ of point sort.
The translation $\varphi^x$ will have at most the variable $x$ free.
We define $\varphi^x$ by induction on $\varphi$:
\begin{itemize}
\item $p^x=P(x)$ for $p\in\Var$
\item $\top^x=\top$
\item $(\neg\varphi)^x=\neg\varphi^x$,
and $(\varphi\wedge\psi)^x=\varphi^x\wedge\psi^x$

\item $(\bo\varphi)^x=\exists O(x\in O\wedge\forall y(y\in O\to\varphi^y))$

\item $(\ang n\varphi)^x=\Exists_{0\leq i\leq n}x_i(\bigwedge_{i<j\leq n}x_i\neq x_j
\wedge\bigwedge_{i\leq n}\varphi^{x_i})$, for $n<\omega$.

\end{itemize}

As one might expect,
$\varphi^x$ generally
`means the same' as $\varphi$, as the following lemma shows.
In the lemma and later,
$M\models\varphi^x(a)$
 means that $\varphi^x$ is true in $M$ when $x$ is assigned to $a$,
and $\varphi^x(k/x)$ denotes the
$L$-sentence obtained by substituting the constant $k$
for every free occurrence of $x$ in $\varphi^x$.

\begin{lemma}\label{lem:std tr}
Let a topological model $(X,h)$ and
an $L$-\str\ $M=(X,B)$ be given,
and suppose that 
$\check B=\{\check b:b\in B\}$ is a base for the topology on $X$.
Then for every $\varphi\in\lbc$ and $a\in X$
we have $(X,h),a\models\varphi$ iff $M\models\varphi^x(a)$,
and hence 
$(X,h),k^M\models\varphi$ iff $M\models\varphi^x(k/x)$.
\end{lemma}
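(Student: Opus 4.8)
The plan is to prove the main statement $(X,h),a\models\varphi$ iff $M\models\varphi^x(a)$ by induction on the structure of $\varphi\in\lbc$, and then derive the displayed consequence about $k^M$ by specialising $a$ to $k^M$ (and noting $\varphi^x(k/x)$ is precisely $\varphi^x$ with $x$ replaced by $k$, which is true in $M$ iff $\varphi^x$ holds under the assignment $x\mapsto k^M$).

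The base case $\varphi=p\in\Var$ is immediate: $(X,h),a\models p$ iff $a\in h(p)=P^M$ iff $M\models P(x)(a)=p^x(a)$. The case $\varphi=\top$ is trivial, and the boolean cases $\neg\varphi$, $\varphi\wedge\psi$ follow at once from the induction hypothesis since the translation commutes with $\neg$ and $\wedge$ and truth in both semantics is classical. For the modal case $\varphi=\bo\psi$: if $(X,h),a\models\bo\psi$ then there is an open neighbourhood $O'$ of $a$ with $(X,h),y\models\psi$ for all $y\in O'$; since $\check B$ is a base, pick $b\in B$ with $a\in\check b\subseteq O'$, so by the induction hypothesis $M\models\psi^y(y)$ for every $y\in\check b$, whence $M\models (x\in O\wedge\forall y(y\in O\to\psi^y))$ witnessed by $O:=b$, i.e.\ $M\models(\bo\psi)^x(a)$. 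Conversely, if $M\models(\bo\psi)^x(a)$, some $b\in B$ has $a\in\check b$ and $M\models\psi^y$ for all $y\in\check b$; since $\check b$ is open in $X$ and contains $a$, and by the induction hypothesis $(X,h),y\models\psi$ for all $y\in\check b$, we get $(X,h),a\models\bo\psi$. For the graded case $\varphi=\ang n\psi$: by the induction hypothesis the point-sorted elements $a'$ with $M\models\psi^{x_i}(a')$ are exactly those with $(X,h),a'\models\psi$, so the existence of $n+1$ pairwise distinct such elements (which is what $(\ang n\psi)^x$ asserts, and note its truth does not actually depend on the value assigned to $x$) is equivalent to $|\{y\in X:(X,h),y\models\psi\}|>n$, i.e.\ to $(X,h),a\models\ang n\psi$.

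I expect no real obstacle here; this is a routine ``truth lemma'' style induction, and the one point deserving a line of care is the $\bo$ case, where the hypothesis that $\check B$ is a \emph{base} (not merely a subbase or an arbitrary family) is exactly what is needed in both directions — it lets an arbitrary open neighbourhood be shrunk to a member of $\check B$, and conversely guarantees each $\check b$ is itself open. The final sentence of the lemma is then immediate: apply the equivalence with $a=k^M$ and observe that $M\models\varphi^x(k^M)$ is by definition the same as $M\models\varphi^x(k/x)$.
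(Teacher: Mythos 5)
Your proof is correct and follows essentially the same route as the paper's own (sketched) argument: induction on $\varphi$, with the base property of $\check B$ doing the work in both directions of the $\bo$ case. The only difference is that you spell out the graded-modality and final $k$-substitution steps, which the paper dismisses as straightforward.
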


\begin{proof}[Proof (sketch)]
The proof is by induction on $\varphi$.
We consider only the case $\bo\varphi$, as the other cases are straightforward.
Let $a\in X$.
Then $(X,h),a\models\bo\varphi$ iff
$a$ has an open \nhd\ $O$
with $(X,h),a'\models\varphi$ for every $a'\in O$.
As $\check B$ is a base for the topology on $X$,
and by the inductive hypothesis,
this is iff there is $b\in B$
with $ a\in \check b$
and $M\models \varphi^x(a')$ for every $a'\in \check b$.
This is plainly iff $M\models(\bo\varphi)^x(a)$.
\end{proof}

We will prove that $\lbc$
is compact over the Cantor set using standard translations,
which give us access to \fo\ compactness.
Suppose that $\Sigma$ is a set of $\lbc$-formulas
that is finitely satisfiable over the Cantor set.
It will follow that for a certain \fo\ theory $T$,
the theory $T\cup\{\varphi^x(k/x):\varphi\in\Sigma\}$
is consistent, so by 
\fo\ compactness, it has a model.
We will transform a countable model of it into a model of $\Sigma$
over the Cantor set.
The `side theory' $T$ allows us to do this.
It will in fact be the theory $T_{good}$, defined next.

\subsubsection{Good $L$-\str s}

For set-sorted terms $b,c$, we write
$b\leq c$ to abbreviate the $L$-formula $b+c=c$,
and for any $\lbc$-formula $\varphi$,
we write $b\subseteq\sem\varphi$ 
to abbreviate the $L$-formula
$\forall x(x\in b\to\varphi^x)$.

\begin{definition}\label{def:good}
An $L$-\str\ $M=(X,B)$  is said to be \emph{good}
if \begin{enumerate}
\item\label{good:atomlessba} $(B,+^M,-^M,0^M,1^M)$ is an atomless \ba\

\item\label{good:rep}
$M\models\forall bcx([x\in b+c\leftrightarrow x\in b\vee x\in c]
\wedge [x\in -b\leftrightarrow \neg(x\in b)])$

\item\label{good:hausdorff} $M\models\forall bc(\forall x(x\in b\leftrightarrow x\in c)\to b=c)$

\item\label{good4} $M\models\forall xy(\forall b(x\in b\leftrightarrow y\in b)\to x=y)$

\item\label{good:main} $\displaystyle M\models\forall b\Big(b\subseteq\sem{\bigvee_{\psi\in\Psi}\bo\psi}
\to\Exists_{\psi\in\Psi}c_\psi
\Big((b\leq\sum_{\psi\in\Psi}c_\psi)
\wedge\bigwedge_{\psi\in\Psi}(c_\psi\subseteq\sem{\bo\psi})\Big)\Big)$,

for every non-empty finite set $\Psi$ of $\lbc$-formulas.

\end{enumerate}
Let $T_{good}$ be the \fo\ $L$-theory comprising 
\fo\ sentences expressing clause~\ref{good:atomlessba} and the $L$-sentences from 
clauses \ref{good:rep}--\ref{good:main} above.
\end{definition}

An $L$-\str\ $M$ is good iff $M\models T_{good}$.
Let us give some examples of good and `bad' $L$-\str s.
Good \str s arise from topological models over 
the Cantor set, and more generally over any
separable 0-\dm al \dims:

\begin{lemma}\label{lem: C good}
Let $X$ be a separable 0-\dm al \dims,
let $(X,h)$ be any topological model over $X$,
and let $M=(X,\clop(X))=(X,h)^{(2)}$ be an $L$-\str\ 
derived from $(X,h)$ as described in \S\ref{ss:2sorted}.
Then $M$ is good.
\end{lemma}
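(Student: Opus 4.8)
The plan is to verify each of the five clauses of Definition~\ref{def:good} for the structure $M=(X,\clop(X))=(X,h)^{(2)}$, where $X$ is a separable 0-\dm al \dims. Clauses~\ref{good:atomlessba}--\ref{good4} are essentially topological bookkeeping: clause~\ref{good:atomlessba} asks that $(\clop(X),\cup,\unarycomplement,\emptyset,X)$ be an atomless boolean algebra, which follows because $X$ is dense in itself (so no non-empty clopen set can be a minimal non-zero element --- given clopen $U\neq\emptyset$, $X$ being T1 and dense-in-itself lets us split $U$ into two non-empty clopen pieces, using 0-\dm ality to find a clopen proper subset of $U$ containing some but not all of $U$'s points); clause~\ref{good:rep} is immediate from the definition of $\in^M$ and the interpretation of $+,-$ as $\cup,\unarycomplement$; clause~\ref{good:hausdorff} holds because distinct clopen sets are distinct subsets of $X$ (as $\check b=b$ here); and clause~\ref{good4} is exactly the statement that $\clop(X)$ separates points of $X$, which holds because $X$ is 0-\dm al (hence T1 and with a clopen base: distinct points lie in disjoint basic clopen sets).

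The substantive clause is~\ref{good:main}. Fix a non-empty finite set $\Psi$ of $\lbc$-formulas and a clopen $b\subseteq\sem{\bigvee_{\psi\in\Psi}\bo\psi}$. Writing $\sem{\bo\psi}=\int\sem\psi$ for the interior in $X$, the hypothesis says $b\subseteq\bigcup_{\psi\in\Psi}\int\sem\psi$. I want clopen $c_\psi$ with $b\subseteq\bigcup_\psi c_\psi$ and each $c_\psi\subseteq\int\sem\psi$. The idea is to cover $b$ by basic clopen sets each contained in some $\int\sem\psi$: for each $x\in b$ pick $\psi(x)\in\Psi$ with $x\in\int\sem{\psi(x)}$, then use 0-\dm ality to pick a clopen $U_x$ with $x\in U_x\subseteq\int\sem{\psi(x)}$. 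This gives an open cover $\{U_x:x\in b\}$ of the clopen (hence closed) subspace $b$. Since $X$ is separable, it is Lindel\"of, so $b$ is Lindel\"of and the cover has a countable subcover $\{U_{x_n}:n<\omega\}$. Now for each $\psi\in\Psi$ let $c_\psi=\bigcup\{U_{x_n}:\psi(x_n)=\psi\}$ --- wait, this is a countable union of clopen sets, which need not be clopen. To fix this, I disjointify and then use compactness-like structure: actually the cleaner route is to observe that $b$ itself, being clopen in a separable 0-\dm al metric space, is again a separable 0-\dm al metric space, and in fact we only need the $c_\psi$ to be \emph{clopen in $X$} and to cover $b$. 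So instead: refine the countable clopen cover $\{U_{x_n}\}$ of $b$ to a countable \emph{partition} of $b$ into clopen (in $b$, hence clopen in $X$ since $b$ is clopen) pieces $V_n$ with $V_n\subseteq U_{x_n}$, by setting $V_n=U_{x_n}\setminus\bigcup_{m<n}U_{x_m}$ --- these are clopen in $X$. Then group: $c_\psi=\bigcup\{V_n:\psi(x_n)=\psi\}$. This is still a countable union. The genuine obstacle is thus producing \emph{clopen} (not merely $F_\sigma$) witnesses $c_\psi$.

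The resolution I expect to use is: replace the countable clopen cover by a \emph{finite} one. Although $b$ need not be compact, we do not need $c_\psi\subseteq\sem\psi$ for a single fixed open set --- we may choose the $c_\psi$ however we like as long as they are clopen, cover $b$, and each is inside $\int\sem\psi$. The trick (as in similar arguments in \cite{gold:tang16}) is to enlarge: since each $V_n$ is clopen and contained in $\int\sem{\psi(x_n)}$, and we want to amalgamate infinitely many into one clopen set, we exploit that $b\setminus\bigcup_{m\le n}V_m$ shrinks. Concretely, for a fixed $\psi$, the set $c_\psi:=\bigcup\{V_n:\psi(x_n)=\psi\}$ is open and its complement in $b$ is $\bigcup\{V_n:\psi(x_n)\ne\psi\}$, also open, so $c_\psi$ is clopen in $b$ and hence in $X$; each $c_\psi\subseteq\int\sem\psi$ since each relevant $V_n$ is; and $\bigcup_\psi c_\psi=\bigcup_n V_n=b$. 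So in fact a countable union of clopen-in-$b$ sets whose complement in $b$ is also such a union \emph{is} clopen in $b$ --- the point is that $c_\psi$ and its complement are both open in $b$, which is all that ``clopen in $b$'' means. This settles clause~\ref{good:main}, and with the routine clauses above, $M$ is good, i.e.\ $M\models T_{good}$, completing the proof. The one place to be careful is that ``clopen in $b$ implies clopen in $X$'' genuinely uses that $b$ is clopen in $X$ (an open subset of an open set is open; a closed-in-$b$ set, $b$ being closed in $X$, is closed in $X$), so I will state that explicitly.
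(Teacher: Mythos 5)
Your proof is correct and follows essentially the same route as the paper's: the routine clauses \ref{good:atomlessba}--\ref{good4} are handled identically, and for clause~\ref{good:main} both arguments run through separability $\Rightarrow$ Lindel\"of together with 0-\dm ality to produce a disjoint clopen refinement whose pieces are grouped into the witnesses $c_\psi$. The difference is one of packaging: the paper applies a cited result of Engelking (in a Lindel\"of 0-\dm al space every open cover refines to a cover by \pd\ open sets, any union of whose members is clopen) to the finite cover $\{-b\}\cup\{\sem{\bo\psi}^X:\psi\in\Psi\}$ of the whole space, whereas you reprove the needed special case by hand --- cover $b$ by basic clopen sets lying inside the sets $\sem{\bo\psi}^X$, extract a countable subcover by Lindel\"of-ness, disjointify by finite boolean operations on clopen sets, and note that each grouped union is clopen relative to $b$ because its complement in $b$ is again such a union --- which makes the key step self-contained. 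One detail to tighten: as written, your $U_x$ (and hence the $V_n$ and the grouped sets $c_\psi$) need not lie inside $b$, so the $V_n$ do not literally partition $b$ and $c_\psi$ itself need not be clopen in $X$; ``clopen in $b$'' should only be applied to subsets of $b$. This is repaired immediately either by choosing $U_x\subseteq b\cap\int\sem{\psi(x)}^X$ from the start (possible since $b$ is open and $X$ has a clopen base) or by taking $c_\psi\cap b$ as the witnesses; with that adjustment the clopen-in-$b$-hence-clopen-in-$X$ step you flag at the end goes through exactly as you intend, and the verification of clause~\ref{good:main} is complete.
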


\begin{proof}
As $X$ is 0-\dm al and 
dense in itself, $(\clop(X),\cup,\unarycomplement,\emptyset,X)$ is an atomless \ba,
and clauses \ref{good:rep}--\ref{good4} of definition~\ref{def:good} clearly hold for $M$.

We check clause~\ref{good:main}.
For a $\lbc$-formula $\varphi$, write $\sem\varphi^X$
for $\{x\in X:(X,h),x\models\varphi\}$.
Let $b\in \clop(X)$ and let a non-empty finite set $\Psi$ of $\lbc$-formulas
be given, with
$b\subseteq\sem{\bigvee_{\psi\in\Psi}\bo\psi}^X$.

Now we use some topology.
As $X$ is a separable \ms, it is Lindel\"of
\cite[4.1.16]{Engelking89}.
As it is also 0-\dm al, by \cite[6.2.5, 6.2.7]{Engelking89}
the finite open cover $\{-b\}\cup\{\sem{\bo\psi}^X:\psi\in\Psi\}$
of $X$ can be refined to 
a cover consisting of \pd\ open sets.
Plainly, any union of these sets is clopen.
So we can find clopen sets $c_\psi\in\clop(X)$ with $c_\psi\subseteq\sem{\bo\psi}^X$
(for each $\psi\in\Psi$)
such that $b\subseteq\bigcup_{\psi\in\Psi}c_\psi$.

Clause~\ref{good:main} now follows by
clauses 1--3 and lemma~\ref{lem:std tr},
which applies since $\clop(X)$ is a base for the topology on $X$.
So $M$ is good.
\end{proof}

\begin{example}\label{eg:bad str}\rm
An example of a bad $L$-\str\ 
is $Q=(\rats,B)$, where
$\rats$ is the set of rational numbers,
$B$ is the countable atomless \ba\ consisting of finite unions of intervals
of $\rats$
of the form  $(x+\pi,y+\pi)$
(where $x<y$ in $\rats\cup\{\pm\infty\}$),
$\in^Q$ is ordinary set membership,
and for some atom $p\in L$ we have
\[
P^Q=\bigcup_{n\in\mathbb{Z}}(2n+\pi,2n+1+\pi),
\]
where $\ints$ denotes the set of integers.
Under the standard metric $d(x,y)=|x-y|$, $\rats$ is a separable
0-\dm al \dims,
and $B$ is a base of clopen sets for its topology.
However, $\rats$ has continuum-many clopen sets, and indeed
$P^Q$ is clopen but is not in $B$.  So $B\subsetneq\clop(\rats)$.

Now $\rats\in B$ and
$\rats\subseteq \sem{\bo p\vee\bo\neg p}^X$.
But the sets  $\sem{\bo p}^X$ and $\sem{\bo\neg p}^X$
(which are $P^Q$ and $\rats\setminus P^Q$, respectively)  are disjoint.
So for any $c,c'\in B$, if $\rats\subseteq c\cup c'$, $c\subseteq\sem{\bo p}^X$, and $c'\subseteq\sem{\bo\neg p}^X$, then in fact $c=P^Q$, which is impossible since $P^Q\notin B$.
So there are no such $c,c'$, and 
clause~\ref{good:main} of definition~\ref{def:good} fails.
(The other clauses are ok.)
\end{example}

\subsubsection{Ultrafilter extensions of good \str s}
We now aim to construct an `\uf\ extension' of a good \str.
In \S\ref{sss:models cantor}, we will show that for a {countable} good \str, 
this extension is homeomorphic to the Cantor set, and `truth-preserving'.

So until the end of \S\ref{sss:models cantor}, fix a good $L$-\str\ $M=(X,B)$.
Then $(B,+^M,-^M,0^M,1^M)$ is an atomless \ba, which we write henceforth simply as $B$.
We write
\[
\begin{array}{rcll}
\check b&=&\{x\in X:M\models x\in b\}&\mbox{for }b\in B,
\\
\hat x&=&\{b\in B:M\models x\in b\}&\mbox{for }x\in X.
\end{array}
\]
Each $\hat x$ is a (non-principal) \uf\ of $B$.
By clauses \ref{good:rep}--\ref{good:hausdorff} of definition~\ref{def:good}, 
the map $(b\mapsto\check b)$
is a boolean embedding of  $B$ into 
the boolean set algebra $(\wp(X),\cup,\unarycomplement,\emptyset,X)$.
We form the topological model $M^{(1)}=(X,h)$ as outlined
in \S\ref{ss:2sorted} above.
Then $\check B=\{\check b:b\in B\}$
contains $X$ and
is closed under finite intersections, and hence
\cite[5.3]{Will70} is a base for the topology on $X$.
So lemma~\ref{lem:std tr} applies to $M$ and $(X,h)$.
We have $\check B\subseteq\clop(X)$, but the inclusion may be proper
(see example~\ref{eg:bad str}).

We will let $\varphi,\psi,$ etc., denote arbitrary $\lbc$-formulas.
We write $\sem\varphi^X=\{x\in X:(X,h),x\allowbreak\models\varphi\}$.

\begin{definition}
Let $\mu$ be an \uf\ of $B$.
For a $\lbc$-formula $\varphi$,
we write
\begin{enumerate}

\item[$\bullet$] $\mu\Vdash\bo\varphi$ if there is $b\in\mu$
such that $\check b\subseteq\sem\varphi^X$,

\item[$\bullet$] $\mu\Vdash\di\varphi$ if $\mu\not\Vdash\bo\neg\varphi$.

\end{enumerate}
We define $F_\mu=\{\check b:b\in\mu\}\cup\{\sem{\di\psi}^X:\mu\Vdash\di\psi\}
\cup\{\{x\}:x\in X,\;\mu=\hat x\}$.
So $F_\mu\subseteq\wp(X)$.
\end{definition}

\begin{lemma}\label{lem:F fs}
For each \uf\ $\mu$ of $B$, the set
$F_\mu$ has the finite intersection property
(i.e., $\bigcap S\neq\emptyset$ for every
non-empty finite $S\subseteq F_\mu$).
\end{lemma}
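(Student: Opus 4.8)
The plan is a case split on the \uf\ $\mu$, which reduces the finite intersection property to a single application of clause~\ref{good:main} of definition~\ref{def:good}. First, some bookkeeping, using that $b\mapsto\check b$ is a boolean embedding of $B$ into $(\wp(X),\cup,\unarycomplement,\emptyset,X)$ (clauses~\ref{good:rep}--\ref{good:hausdorff}): it is injective and a homomorphism, so it sends $0$ to $\emptyset$ and preserves finite meets, whence $\check b\neq\emptyset$ for every $b\in\mu$ (as $\mu$ is proper, $0\notin\mu$), and $\check{b_1\cdots b_m}=\check{b_1}\cap\dots\cap\check{b_m}$. Hence a non-empty finite $S\subseteq F_\mu$ may be written as $\{\check b_1,\dots,\check b_m\}\cup\{\sem{\di\psi_1}^X,\dots,\sem{\di\psi_n}^X\}$, together with one extra set $\{x\}$ precisely when $\mu=\hat x$ (unique by clause~\ref{good4}); writing $b=b_1\cdots b_m\in\mu$ (read as $b=1$, $\check b=X$ if $m=0$), we have $\bigcap S=\check b\cap\bigcap_{j\le n}\sem{\di\psi_j}^X$, intersected also with $\{x\}$ in the singleton case. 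I will use throughout that $\sem{\di\psi_j}^X=\cl(\sem{\psi_j}^X)$, so $X\setminus\sem{\di\psi_j}^X=\int(X\setminus\sem{\psi_j}^X)=\sem{\bo\neg\psi_j}^X$.

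If $\mu=\hat x$, I would just verify $x\in\bigcap S$: here $x\in\check b_i$ since $b_i\in\hat x$, and $x\in\{x\}$ trivially; and if $x\notin\sem{\di\psi_j}^X=\cl(\sem{\psi_j}^X)$ then $x\in\sem{\bo\neg\psi_j}^X$, so since $\check B$ is a base for the topology on $X$ there is $c\in B$ with $x\in\check c\subseteq\sem{\neg\psi_j}^X$, and then $c\in\hat x=\mu$ witnesses $\mu\Vdash\bo\neg\psi_j$, contradicting $\mu\Vdash\di\psi_j$; so $x\in\sem{\di\psi_j}^X$ for every $j$.

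Otherwise $S$ contains no singleton and $m+n\ge1$, and it suffices to show $\check b\cap\bigcap_{j\le n}\cl(\sem{\psi_j}^X)\neq\emptyset$. For $n=0$ this is just $\check b\neq\emptyset$, already noted. For $n\ge1$, suppose the intersection were empty; then $\check b\subseteq\bigcup_{j\le n}\sem{\bo\neg\psi_j}^X$, which by lemma~\ref{lem:std tr} (applicable since $\check B$ is a base for $X$) says $M\models b\subseteq\sem{\bigvee_{\psi\in\Psi}\bo\psi}$ for $\Psi=\{\neg\psi_1,\dots,\neg\psi_n\}$. Applying clause~\ref{good:main} of definition~\ref{def:good} to this $b$ and $\Psi$ yields $c_\psi\in B$ $(\psi\in\Psi)$ with $b\le\sum_{\psi\in\Psi}c_\psi$ and $\check{c_\psi}\subseteq\sem{\bo\psi}^X\subseteq\sem{\psi}^X$; since $b\in\mu$ and $\mu$ is an \uf\ of $B$, the finite sum lies in $\mu$, so $c_{\neg\psi_{j_0}}\in\mu$ for some $j_0$, and this $c_{\neg\psi_{j_0}}$ witnesses $\mu\Vdash\bo\neg\psi_{j_0}$, contradicting $\mu\Vdash\di\psi_{j_0}$. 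Hence $\bigcap S\neq\emptyset$, and $F_\mu$ has the finite intersection property.

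The only genuinely non-trivial point is handling the diamond-sets of $S$ \emph{simultaneously}: pairwise intersections are trivial (since $\sem{\di\psi}^X\supseteq\sem{\psi}^X$ and $b\in\mu$ is ``large''), but intersecting all of them with $\check b$ at once is exactly what clause~\ref{good:main} is designed to deliver — it is the syntactic shadow of the finite-partition argument behind lemma~\ref{lem: C good}. The one other thing to be careful about is the routine translation between the semantic inclusion $\check b\subseteq\sem{\chi}^X$ and its \fo\ form $M\models b\subseteq\sem\chi$ via lemma~\ref{lem:std tr}.
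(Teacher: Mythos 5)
Your proof is correct and follows essentially the same route as the paper's: the case $\mu=\hat x$ is handled by checking $x\in\bigcap S$, and otherwise an empty intersection is refuted by translating $\check b\subseteq\bigcup_j\sem{\bo\neg\psi_j}^X$ via lemma~\ref{lem:std tr} and applying clause~\ref{good:main} of definition~\ref{def:good} together with the ultrafilter property of $\mu$. The extra bookkeeping you supply (reducing to a single $b\in\mu$, noting $\check b\neq\emptyset$, and spelling out why $x\in\sem{\di\psi}^X$ when $\mu=\hat x$) is exactly what the paper leaves implicit.
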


\begin{proof}
Suppose first that $\mu=\hat x$ for some $x\in X$.
Then $x$ is unique (by clause~\ref{good4} of definition~\ref{def:good}),  
$x\in\check b$ for every $b\in\mu$,
and $x\in \sem{\di\psi}^X$ for every $\psi$ with $\mu\Vdash\di\psi$.
So $x\in\bigcap F_\mu$ and we are done.

Now suppose that there is no such $x$,
so $F_\mu=\{\check b:b\in\mu\}\cup\{\sem{\di\psi}^X:\mu\Vdash\di\psi\}$.
As we said, lemma~\ref{lem:std tr} applies to $M$ and $(X,h)$,
so
\begin{equation}\label{e:std tr in M}
\check b\subseteq\sem\varphi^X \iff M\models b\subseteq\sem\varphi,
\quad\mbox{for each }b\in B.
\end{equation}

Assume for contradiction that
there are $b\in \mu$ and $\lbc$-formulas $\Vecc\psi n$
with   $\mu\Vdash\di\psi_i$ for each $i<n$,
such that $\check b\cap\bigcap_{i<n}\sem{\di\psi_i}^X=\emptyset$.
Hence, $n>0$ and
$\check b\subseteq\bigcup_{i<n}\sem{\bo\neg\psi_i}^X$.
Then~\eqref{e:std tr in M} gives
 $M\models b\subseteq\sem{\bigvee_{i<n}\bo\neg\psi_i}$.
Since $M$ is good, 
there are $\Vecc cn\in B$
with $M\models c_i\subseteq\sem{\bo\neg\psi_i}$ for each $i<n$,
and $M\models b\leq\sum_{i<n}c_i$.
But  $\mu$ is an \uf\ containing $b$,
so $c_i\in\mu$ for some $i<n$.
Using~\eqref{e:std tr in M} again,
$\check c_i\subseteq\sem{\bo\neg\psi_i}^X\subseteq\sem{\neg\psi_i}^X$, 
so $\mu\Vdash\bo\neg\psi_i$, contradicting
$\mu\Vdash\di\psi_i$.
\end{proof}

\begin{definition}\label{def:gammamu}
For each \uf\ $\mu$ of $B$, we choose
an \uf\ $\b\mu$ on $X$ containing $F_\mu$.
(By lemma~\ref{lem:F fs} and the boolean prime ideal theorem, this is possible.)
We then define
\[
\Gamma_\mu=\{\varphi\in\lbc:\sem\varphi^X\in\b\mu\}.
\]
\end{definition}

\begin{lemma}\label{lem:arrays}
Let $\mu$ be an \uf\ of $B$.
Then for all $\lbc$-formulas
$\varphi,\psi$, we have:
\begin{enumerate}
\item\label{arrays1} $\neg\varphi\in\Gamma_\mu$ iff $\varphi\notin\Gamma_\mu$.

\item\label{arrays2}  $\varphi\wedge\psi\in\Gamma_\mu$ iff $\{\varphi,\psi\}\subseteq\Gamma_\mu$.

\item\label{arrays3} $\bo\varphi\in\Gamma_\mu$ iff $\mu\Vdash\bo\varphi$.
Either condition implies $\varphi\in\Gamma_\mu$.

\item\label{arrays4} If $\varphi\in\Gamma_\mu$ then $\mu\Vdash\di\varphi$.

\item\label{arrays5} If $x\in X$ and $\mu=\hat x$,
then $\varphi\in\Gamma_\mu$ iff $(X,h),x\models\varphi$.
\end{enumerate}
\end{lemma}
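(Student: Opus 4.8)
The plan is to prove all five items by a simultaneous induction on the structure of $\varphi$ (items \ref{arrays1}--\ref{arrays2} are the boolean base for the induction, while items \ref{arrays3}--\ref{arrays5} are where the modal and quantifier operators do real work). First I would dispose of items \ref{arrays1} and \ref{arrays2}: since $\b\mu$ is an ultrafilter on $X$ and $\sem{\neg\varphi}^X = X\setminus\sem\varphi^X$, $\sem{\varphi\wedge\psi}^X=\sem\varphi^X\cap\sem\psi^X$, these are immediate from the ultrafilter axioms. This argument does not need induction and is valid for all $\varphi,\psi$ at once; it is worth stating separately at the start.

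For item \ref{arrays3}, the forward direction $\bo\varphi\in\Gamma_\mu\To\mu\Vdash\bo\varphi$: suppose $\mu\not\Vdash\bo\varphi$, i.e.\ no $b\in\mu$ has $\check b\subseteq\sem\varphi^X$; equivalently $\mu\Vdash\di\neg\varphi$, so $\sem{\di\neg\varphi}^X\in F_\mu\subseteq\b\mu$, and since $\sem{\di\neg\varphi}^X\subseteq X\setminus\sem{\bo\varphi}^X$ (using $\sem{\bo\varphi}^X=\int\sem\varphi^X$ and the fact that $\check B$ is a base) we get $\sem{\bo\varphi}^X\notin\b\mu$, i.e.\ $\bo\varphi\notin\Gamma_\mu$. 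Conversely, if $\mu\Vdash\bo\varphi$ then there is $b\in\mu$ with $\check b\subseteq\sem\varphi^X$, hence $\check b\subseteq\int\sem\varphi^X=\sem{\bo\varphi}^X$; since $\check b\in F_\mu\subseteq\b\mu$ we get $\sem{\bo\varphi}^X\in\b\mu$, so $\bo\varphi\in\Gamma_\mu$. The final clause (either condition implies $\varphi\in\Gamma_\mu$) follows since $\check b\subseteq\sem\varphi^X$ with $\check b\in\b\mu$ forces $\sem\varphi^X\in\b\mu$. Note this part of item~\ref{arrays3} is purely about the semantic sets $\sem{\bo\varphi}^X$, $\sem\varphi^X$ and does not actually need the inductive hypothesis --- it is a statement relating $\Gamma_\mu$ and $\Vdash$ directly, so it can be proved outright for all $\varphi$. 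Item~\ref{arrays4} is then formal: if $\varphi\in\Gamma_\mu$ then $\neg\varphi\notin\Gamma_\mu$ by item~\ref{arrays1}, so by item~\ref{arrays3} $\bo\neg\varphi\notin\Gamma_\mu$, hence (again item~\ref{arrays3}) $\mu\not\Vdash\bo\neg\varphi$, which is the definition of $\mu\Vdash\di\varphi$.

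Item~\ref{arrays5} is the one that genuinely needs induction on $\varphi$, and I expect it to be the main point of the lemma (and the mildly delicate step). Suppose $\mu=\hat x$. The boolean cases follow from items~\ref{arrays1}--\ref{arrays2} and the inductive hypothesis. For $\bo\varphi$: by item~\ref{arrays3}, $\bo\varphi\in\Gamma_\mu$ iff $\mu\Vdash\bo\varphi$ iff there is $b\in\hat x$, i.e.\ $x\in\check b$, with $\check b\subseteq\sem\varphi^X$; using lemma~\ref{lem:std tr} (equivalently, that $\check B$ is a base and $\sem{\bo\varphi}^X=\int\sem\varphi^X$) this is exactly $(X,h),x\models\bo\varphi$. (Strictly one should phrase this via $\sem\varphi^X$ directly so as not to circularly invoke the inductive hypothesis on $\varphi$ --- but since $\sem\varphi^X$ is already defined semantically, no induction is even needed in the $\bo$ case.) For $\ang n\varphi$: $\ang n\varphi\in\Gamma_{\hat x}$ iff $\sem{\ang n\varphi}^X\in\b\mu$; but $\sem{\ang n\varphi}^X$ is either $\emptyset$ or $X$ according to whether $|\sem\varphi^X|>n$, and $\emptyset\notin\b\mu$, $X\in\b\mu$, so this holds iff $|\sem\varphi^X|>n$ iff $(X,h),x\models\ang n\varphi$. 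Hence item~\ref{arrays5} reduces, once the semantics of $\bo$ and $\ang n$ are unwound, to the already-proved items~\ref{arrays1}--\ref{arrays3}, and the only role of the hypothesis $\mu=\hat x$ is to know that the singleton $\{x\}$ lies in $F_\mu\subseteq\b\mu$ --- though in fact for $\lbc$-formulas this is not even needed, since $\sem\varphi^X$ for modal or graded $\varphi$ is already a $\check B$-set or is $\emptyset$/$X$. The main obstacle, if any, is purely bookkeeping: being careful that the ``semantics'' $\sem\varphi^X$ is fixed in advance via $(X,h)=M^{(1)}$, so that items~\ref{arrays3} and~\ref{arrays5} are statements comparing $\Gamma_\mu$ with that fixed semantics rather than a self-referential truth definition, and invoking lemma~\ref{lem:std tr} precisely where the base property of $\check B$ is used.
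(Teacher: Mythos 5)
Your items 1--4 follow the paper's own route: 1--2 from $\b\mu$ being an \uf\ on $X$, item 3 by trading $\check b\in F_\mu$ against $\sem{\di\neg\varphi}^X\in F_\mu$ (you run one direction contrapositively, the paper directly, but it is the same computation), and item 4 formally from 1 and 3. The divergence is item 5. The paper's proof is a one-liner needing no induction whatsoever: when $\mu=\hat x$, the clause $\{x\}\in F_\mu$ forces $\b\mu$ to be the principal \uf\ at $x$, so $\varphi\in\Gamma_\mu$ iff $\sem\varphi^X\in\b\mu$ iff $x\in\sem\varphi^X$, uniformly in $\varphi$. The underlying point is that $\Gamma_\mu$ is defined directly from the semantic sets $\sem\varphi^X$ (it is not a maximal consistent set), so no truth-lemma-style induction is required anywhere in this lemma. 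Your structural induction for item 5 does work --- atoms via principality, booleans via items 1--2, $\bo\varphi$ via item 3 and the base property of $\check B$, $\ang n\varphi$ via $\sem{\ang n\varphi}^X\in\{\emptyset,X\}$ --- but it is more labour than needed, and you never actually write down the atomic base case, which is precisely the one place where $\{x\}\in\b\mu$ is indispensable.

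Relatedly, your closing aside --- that for $\lbc$-formulas the singleton clause of $F_\mu$ ``is not even needed'' --- is wrong: for an atom $p$, $h(p)$ is an arbitrary subset of $X$, in general neither a $\check B$-set nor $\emptyset$ nor $X$, so without principality $h(p)\in\b{\hat x}$ need not reduce to $x\in h(p)$. (Also, $\sem{\bo\varphi}^X$ is merely open, not necessarily a member of $\check B$, though your actual argument for the $\bo$ case does not rely on that claim.) This does not damage the proof you gave, since principality is available and you appeal to it; but the aside should be dropped, and the whole of item 5 can be collapsed to the principality observation.
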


\begin{proof}
\begin{enumerate}
\item [1, 2.] These hold since $\b\mu$ is an \uf\ on $X$.

\setcounter{enumi}{2}

\item If  $\mu\Vdash\bo\varphi$
then there is $b\in\mu$ with $\check b\subseteq\sem\varphi^X$.
But $\check b$ is open, so $\check b\subseteq\int\sem\varphi^X=\sem{\bo\varphi}^X$.
As $\check b\in F_\mu\subseteq\b\mu$,
we have  $\sem{\bo\varphi}^X\in \b\mu$ as well, and so $\bo\varphi\in\Gamma_\mu$.

Conversely, if $\bo\varphi\in\Gamma_\mu$,
then $\sem{\bo\varphi}^X\in\b\mu$.
Since $\b\mu$ is an \uf,
$\sem{\di\neg\varphi}^X\notin \b\mu\supseteq F_\mu$.
This means that $\mu\not\Vdash\di\neg\varphi$,
and hence clearly $\mu\Vdash\bo\varphi$.

In either case, $\sem{\bo\varphi}^X\in\b\mu$.
But $\sem{\bo\varphi}^X\subseteq\sem{\varphi}^X$.
So also $\sem{\varphi}^X\in\b\mu$, and
$\varphi\in\Gamma_\mu$.

\item This follows from \ref{arrays3} and \ref{arrays1}.

\item We have $\{x\}\in F_\mu\subseteq\b\mu$.
In this case, $\b\mu$ is principal.
So  $\varphi\in\Gamma_\mu$ iff $\sem\varphi^X\in\b\mu$,
iff $x\in\sem\varphi^X$, iff $(X,h),x\models\varphi$.
\end{enumerate}
\unskip
\end{proof}

\subsubsection{Models over Cantor set from 
countable good \str s}\label{sss:models cantor}

We now further assume that \emph{the \ba\ $B$ is countable.}
As $B$ is also atomless, 
its Stone space (of \uf s) is homeomorphic to the Cantor set $\setC$
(as pointed out at the start of \ref{sec:cantor set}),
and we will identify the two.
So we take $\setC$ to be the set of \uf s of $B$,
and the clopen sets in $\setC$ to be the sets of the form
$\{\mu\in\setC:b\in\mu\}$ for $b\in B$.
These sets form a base for the topology on $\setC$.

\begin{definition}\label{def:cantor asst}
Define an assignment $g:\Var\to\wp(\setC)$
by $g(p)=\{\mu\in\setC:p\in\Gamma_\mu\}$, for each atom $p\in\Var$.
Here, $\Gamma_\mu$ is as in definition~\ref{def:gammamu}.
\end{definition}

\begin{lemma}[truth lemma]\label{lemma:tl2}
For every $\lbc$-formula $\varphi$,
we have
\[
(\setC,g),\mu\models\varphi\iff\varphi\in\Gamma_\mu,
\quad\mbox{for every }\mu\in\setC.
\]
\end{lemma}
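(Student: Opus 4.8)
The proof will go by induction on the structure of $\varphi$, following the standard Henkin-style argument. The atomic and boolean cases are handled directly: for $p\in\Var$, the equivalence $(\setC,g),\mu\models p\iff\mu\in g(p)\iff p\in\Gamma_\mu$ is the definition of $g$ (definition~\ref{def:cantor asst}); the cases of $\neg$ and $\wedge$ use that each $\b\mu$ is an ultrafilter on $X$, which is exactly lemma~\ref{lem:arrays}(\ref{arrays1}, \ref{arrays2}). So the whole content is in the modal cases $\bo\varphi$ and $\ang n\varphi$, and the work of lemma~\ref{lem:arrays} has been set up precisely to bridge between $\Gamma_\mu$ and the operator $\mu\Vdash$ on the algebra side.

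For the case $\bo\varphi$, assume the result for $\varphi$. If $\bo\varphi\in\Gamma_\mu$ then by lemma~\ref{lem:arrays}(\ref{arrays3}), $\mu\Vdash\bo\varphi$, so there is $b\in\mu$ with $\check b\subseteq\sem\varphi^X$; I want to produce a clopen neighbourhood of $\mu$ in $\setC$ on which $\varphi$ holds. The natural candidate is the basic clopen set $\hat b=\{\nu\in\setC:b\in\nu\}$, which contains $\mu$. For any $\nu\in\hat b$ we have $b\in\nu$, hence $\check b\in F_\nu\subseteq\b\nu$, hence $\sem\varphi^X\in\b\nu$, so $\varphi\in\Gamma_\nu$, and by the inductive hypothesis $(\setC,g),\nu\models\varphi$. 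Thus $(\setC,g),\mu\models\bo\varphi$. Conversely, suppose $(\setC,g),\mu\models\bo\varphi$. Then there is a basic clopen neighbourhood $\hat b$ of $\mu$ (so $b\in\mu$) with $(\setC,g),\nu\models\varphi$, i.e.\ $\varphi\in\Gamma_\nu$, for every $\nu\in\hat b$. I need to deduce $\mu\Vdash\bo\varphi$, equivalently (by definition) that some $c\in\mu$ has $\check c\subseteq\sem\varphi^X$. The idea is that $\hat b$ is exactly the Stone-space dual of the relative algebra below $b$, so "$\varphi$ holds throughout $\hat b$" should force "$\varphi^x$ holds throughout $\check b$" --- the point being that if some $a\in\check b$ had $(X,h),a\not\models\varphi$, then $\hat a$ is an ultrafilter of $B$ containing $b$, and by lemma~\ref{lem:arrays}(\ref{arrays5}) $\varphi\notin\Gamma_{\hat a}$, contradicting $\hat a\in\hat b$. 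So $\check b\subseteq\sem\varphi^X$, whence with $c=b$ we get $\mu\Vdash\bo\varphi$, and by lemma~\ref{lem:arrays}(\ref{arrays3}) again $\bo\varphi\in\Gamma_\mu$.

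For the case $\ang n\varphi$, assume the result for $\varphi$. By the semantics of the graded modality in $\setC$, $(\setC,g),\mu\models\ang n\varphi$ iff $|\{\nu\in\setC:(\setC,g),\nu\models\varphi\}|>n$, iff (by the inductive hypothesis) $|\{\nu\in\setC:\varphi\in\Gamma_\nu\}|>n$. On the other side, $\ang n\varphi\in\Gamma_\mu$ iff $\sem{\ang n\varphi}^X\in\b\mu$. Here I would use lemma~\ref{lem:arrays}(\ref{arrays5}) to identify $\{x\in X:(X,h),x\models\varphi\}=\sem\varphi^X$ with a set of ultrafilters $\hat x$ of $B$ satisfying $\varphi\in\Gamma_{\hat x}$, together with the standard-translation lemma~\ref{lem:std tr} relating $\sem{\ang n\varphi}^X$ to $\sem\varphi^X$ having more than $n$ elements. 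The key observation is that the points of $X$ inject into $\setC$ via $x\mapsto\hat x$ and that the witnesses for $\ang n\varphi$ can be taken among them (this mirrors how clause~\ref{good:hausdorff} and clause~\ref{good4} of definition~\ref{def:good} make these maps behave well). I expect the main obstacle to be exactly this $\ang n$ case: one must be careful that the cardinality of the truth set is the same whether computed in $X$ (via the standard translation and lemma~\ref{lem:std tr}, so inside $M$) or in $\setC$ (via the inductive hypothesis), and this is where the interplay of lemmas~\ref{lem:std tr}, \ref{lem:arrays}, and the goodness clauses really has to be marshalled. The $\bo$ case, by contrast, is essentially the routine Stone-duality computation sketched above.
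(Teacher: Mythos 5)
Your atomic, boolean, and $\bo$ cases are correct and essentially the paper's own argument: your forward $\bo$ step is the same Stone-base computation, and your backward step is just the contrapositive of the paper's (for $x\in\check b$ we have $b\in\hat x$, so $\varphi\in\Gamma_{\hat x}$, and lemma~\ref{lem:arrays}(\ref{arrays5}) gives $(X,h),x\models\varphi$, whence $\check b\subseteq\sem\varphi^X$ and $\mu\Vdash\bo\varphi$).

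The $\ang n\varphi$ case, however, has a genuine gap, sitting exactly where you say you expect the main obstacle. The direction from $\ang n\varphi\in\Gamma_\mu$ to $(\setC,g),\mu\models\ang n\varphi$ does work as you indicate: $\sem{\ang n\varphi}^X\in\b\mu$ forces more than $n$ points of $X$ to satisfy $\varphi$ in $(X,h)$, and since $x\mapsto\hat x$ is injective by clause~\ref{good4} of definition~\ref{def:good}, lemma~\ref{lem:arrays}(\ref{arrays5}) and the inductive hypothesis yield more than $n$ points of $\setC$ satisfying $\varphi$. But in the converse direction the witnesses are $n+1$ pairwise distinct ultrafilters $\mu_0,\ldots,\mu_n\in\setC$ with $\varphi\in\Gamma_{\mu_i}$, and these are in general \emph{not} of the form $\hat x$ (in the intended application $X$ is countable while $\setC$ is uncountable), so your assertion that ``the witnesses for $\ang n\varphi$ can be taken among them'' is precisely what has to be proved; neither lemma~\ref{lem:arrays}(\ref{arrays5}), which only speaks about ultrafilters of the form $\hat x$, nor the standard-translation lemma~\ref{lem:std tr} gives any purchase on abstract ultrafilters of $B$. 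The missing step --- the heart of the paper's proof of this case --- uses lemma~\ref{lem:arrays}(\ref{arrays4}): choose $b_i\in\mu_i$ with $\check b_0,\ldots,\check b_n$ pairwise disjoint (possible because distinct ultrafilters of $B$ are separated by elements of $B$ and $b\mapsto\check b$ is a boolean embedding, by clauses \ref{good:rep}--\ref{good:hausdorff} of definition~\ref{def:good}); since $\varphi\in\Gamma_{\mu_i}$, part~(\ref{arrays4}) gives $\mu_i\Vdash\di\varphi$, i.e.\ no $b\in\mu_i$ has $\check b\subseteq\sem{\neg\varphi}^X$, so in particular there is $x_i\in\check b_i$ with $(X,h),x_i\models\varphi$; the $x_i$ are pairwise distinct, so more than $n$ points of $X$ satisfy $\varphi$, hence $\sem{\ang n\varphi}^X=X\in\b\mu$ and $\ang n\varphi\in\Gamma_\mu$. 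Your sketch never invokes lemma~\ref{lem:arrays}(\ref{arrays4}), which is the tool that extracts a point of $X$ from an arbitrary ultrafilter satisfying $\varphi$, and without it (or an equivalent density argument) the cardinality transfer from $\setC$ back to $X$ that you flag as delicate is not established.
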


\begin{proof}
The proof is by induction on $\varphi$.
For  $\varphi\in\Var$ it follows from the definition of $g$,
and  obviously $\top\in\Gamma_\mu$.
The boolean cases $\neg\varphi$ and $\varphi\wedge\psi$ are easy, using lemma~\ref{lem:arrays}(\ref{arrays1},\ref{arrays2}).
For the remaining cases, assume the result for $\varphi$ inductively,
and first consider $\bo\varphi$.

Let $\mu\in \setC$ be given.
If $\bo\varphi\in\Gamma_\mu$, then $\mu\Vdash\bo\varphi$
by lemma~\ref{lem:arrays}(\ref{arrays3}), so there is $b\in\mu$ such that 
$\check b\subseteq\sem\varphi^X$.
So $\nu\Vdash\bo\varphi$ for every $\nu\in \setC$ with $b\in\nu$
---  $b$ itself witnesses this.
So by lemma~\ref{lem:arrays}(\ref{arrays3}) again,
$\varphi\in \Gamma_\nu$ for all such $\nu$.
Inductively, $(\setC,g),\nu\models\varphi$
for all such $\nu$.
The set of these $\nu$ is a clopen subset of $\setC$ containing $\mu$,
so  by semantics,
$(\setC,g),\mu\models\bo\varphi$.

Conversely,
if $(\setC,g),\mu\models\bo\varphi$
then there is $b\in\mu$
with  $(\setC,g),\nu\models\varphi$ for every $\nu\in\setC$ containing $b$.
Inductively, $\varphi\in\Gamma_\nu$ for all such $\nu$.
In particular, for every $x\in\check b$,
since $b\in\hat x$, we have $\varphi\in\Gamma_{\hat x}$.
By lemma~\ref{lem:arrays}(\ref{arrays5}), $(X,h),x\models\varphi$ for all such $x$.
So $\check b\subseteq\sem\varphi^X$, and thus $\mu\Vdash\bo\varphi$.
By lemma~\ref{lem:arrays}(\ref{arrays3}), $\bo\varphi\in\Gamma_\mu$.

Finally, let $n<\omega$
and consider the case $\ang n\varphi$.
Let $\mu\in \setC$ be given.
If $\ang n\varphi\in\Gamma_\mu$,
then $\sem{\ang n\varphi}^X\in\b\mu$,
so certainly
$\ang n\varphi$ is true at some point of $(X,h)$.
So there are more than $n$ points $x\in X$
at which $(X,h),x\models\varphi$.
For each such $x$
we have $\varphi\in\Gamma_{\hat x}$
by lemma~\ref{lem:arrays}(\ref{arrays5}),
so $(\setC,g),\hat x\models\varphi$ by the inductive hypothesis.
By clause~\ref{good4} of definition~\ref{def:good}, the $\hat x$ are pairwise distinct, so
 $(\setC,g),\mu\models\ang n\varphi$ by semantics.

Conversely suppose $(\setC,g),\mu\models\ang n\varphi$,
so there are pairwise distinct $\Vec\mu n\in\setC$
with $(\setC,g),\mu_i\models\varphi$,
and hence inductively $\varphi\in\Gamma_{\mu_i}$, for each $i\leq n$.
Using standard properties of \uf s, 
we can find elements $b_i\in\mu_i$ ($i\leq n$)
such that $\Vec{\check b}n$ are \pd.
For each $i\leq n$,
since $\varphi\in\Gamma_{\mu_i}$,
by lemma~\ref{lem:arrays}(\ref{arrays4}) we have  $\mu_i\Vdash\di\varphi$.
So since $b_i\in\mu_i$, there is $x_i\in\check b_i$
with $(X,h),x_i\models\varphi$.
The $x_i$ are plainly pairwise distinct,
so $\ang n\varphi$ is true in $(X,h)$ at every point.
Then $\sem{\ang n\varphi}^X=X\in\b\mu$, so 
$\ang n\varphi\in \Gamma_\mu$.
\end{proof}

It follows that $(\setC,g)$ `extends' $(X,h)$ in a truth-preserving way:

\begin{corollary}\label{cor:extn}
$(X,h),x\models\varphi$ iff $(\setC,g),\hat x\models\varphi$,
for every $\lbc$-formula $\varphi$ and $x\in X$.
\end{corollary}

\begin{proof}
By lemmas~\ref{lem:arrays}(\ref{arrays5}) and~\ref{lemma:tl2},
 $(X,h),x\models\varphi$ iff $\varphi\in\Gamma_{\hat x}$,  iff $(\setC,g),\hat x\models\varphi$.
\end{proof}

\subsubsection{Compactness and strong completeness}
We can now prove that $\lbc$ is compact over the Cantor set.

\begin{theorem}\label{thm:stro compl cantor bo A}
Every set $\Sigma$ of $\lbc$-formulas 
that is finitely satisfiable in the Cantor set $\setC$
 is satisfiable in $\setC$.
\end{theorem}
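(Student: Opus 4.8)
The plan is to reduce satisfiability in $\setC$ to \fo\ compactness, via the two-sorted framework and the standard translation set up above. Let $\Sigma$ be a set of $\lbc$-formulas that is finitely satisfiable in $\setC$, and consider the \fo\ $L$-theory
\[
T=T_{good}\cup\{\varphi^x(k/x):\varphi\in\Sigma\}.
\]
First I would show $T$ is consistent. By \fo\ compactness it suffices to give a model of each finite $T_0\subseteq T$. Such a $T_0$ mentions only finitely many $\varphi\in\Sigma$, forming a finite set $\Sigma_0$; since $\Sigma$ is finitely satisfiable in $\setC$, there are an assignment $h$ into $\setC$ and a point $c\in\setC$ with $(\setC,h),c\models\Sigma_0$. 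Let $M=(\setC,h)^{(2)}$ be the associated $L$-\str, choosing $k^M=c$. Since $\setC$ is a separable 0-\dm al \dims, lemma~\ref{lem: C good} shows $M$ is good, so $M\models T_{good}$; and since $\clop(\setC)$ is a base for the topology on $\setC$, lemma~\ref{lem:std tr} gives $M\models\varphi^x(k/x)$ for every $\varphi\in\Sigma_0$. Hence $M\models T_0$, and so $T$ is consistent.

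Next, by the downward L\"owenheim--Skolem theorem, $T$ has a \emph{countable} model $M=(X,B)$. As a model of $T_{good}$ it is good, and its set sort $B$ is a countable atomless \ba\ by clause~\ref{good:atomlessba} of definition~\ref{def:good}. So $M$ is exactly the kind of \str\ analysed above: its Stone space of \uf s is (identified with) the Cantor set $\setC$, and from $M$ we obtain the assignment $g$ of definition~\ref{def:cantor asst}, the truth lemma~\ref{lemma:tl2}, and corollary~\ref{cor:extn} relating the topological model $M^{(1)}=(X,h)$ to $(\setC,g)$.

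Finally I would carry $\Sigma$ across. Since $M\models\varphi^x(k/x)$ for every $\varphi\in\Sigma$, and $\check B$ is a base for the topology on $X$, lemma~\ref{lem:std tr} yields $(X,h),k^M\models\varphi$ for every $\varphi\in\Sigma$, i.e.\ $(X,h),k^M\models\Sigma$. Applying corollary~\ref{cor:extn} formula by formula, $(\setC,g),\widehat{k^M}\models\varphi$ for every $\varphi\in\Sigma$, so $\Sigma$ is satisfied at $\widehat{k^M}$ in $(\setC,g)$, and hence is satisfiable in $\setC$, as required.

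The genuinely substantial work --- the \uf\ extension of a countable good \str\ and its truth lemma, in particular the treatment of the graded modalities $\ang n$ in lemma~\ref{lem:arrays} and lemma~\ref{lemma:tl2} --- has already been done, so what remains is mostly assembly. Within this proof the one step needing care is the consistency argument: it must combine finite satisfiability in $\setC$ with both the goodness of the concrete \str\ $(\setC,h)^{(2)}$ (lemma~\ref{lem: C good}) and faithfulness of the standard translation (lemma~\ref{lem:std tr}), and one must pass to a \emph{countable} model so that $B$ is countable and its Stone space really is the Cantor set.
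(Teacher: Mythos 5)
Your proposal is correct and follows essentially the same route as the paper's own proof: the same theory $T_{good}\cup\{\varphi^x(k/x):\varphi\in\Sigma\}$, consistency via lemmas~\ref{lem: C good} and~\ref{lem:std tr} applied to $(\setC,h)^{(2)}$, a countable model by downward L\"owenheim--Skolem, and transfer to $(\setC,g)$ via lemma~\ref{lem:std tr} and corollary~\ref{cor:extn}. No gaps; the assembly is exactly as in the paper.
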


\begin{proof}
Define 
$U=T_{good}\cup\{\varphi^x(k/x):\varphi\in\Sigma\}$.

\claim
$U$ is consistent.

\pfclaim
Let $\Sigma_0\subseteq\Sigma$ be finite.
As $\Sigma$ is finitely satisfiable in $\setC$,
there are an assignment $h$ into $\setC$,
and a point $x\in\setC$,
with $(\setC,h),x\models\Sigma_0$.
Let $M=(\setC,\clop(\setC))$ be an $L$-\str\ 
of the form $(\setC,h)^{(2)}$ as described in \S\ref{ss:2sorted}, 
 in which the constant $k$ is interpreted as $x$.
Then $\clop(\setC)$ is a base for the topology
on $\setC$, so by lemma~\ref{lem:std tr},
$M\models\{\varphi^x(k/x):\varphi\in\Sigma_0\}$.
Now $\setC$ is a compact \ms, and hence is separable \cite[4.1.18]{Engelking89}.
So  lemma~\ref{lem: C good} applies, and $M$ is good,
giving $M\models T_{good}\cup\{\varphi^x(k/x):\varphi\in\Sigma_0\}$.
Since $\Sigma_0$ was an arbitrary finite subset of $\Sigma$, this
shows that $U$ is consistent and proves the claim.

\medskip

So by \fo\ compactness and the downward L\"owenheim--Skolem theorem,
 we can take a countable model $M=(X,B)\models U$
--- that is, both $X$ and $B$ are countable.
Then $M$ is good, since $M\models T_{good}$.
We apply the preceding work to $M$.
Define $(X,h)=M^{(1)}$, and
 $g:\Var\to\wp(\setC)$ as in definition~\ref{def:cantor asst}.
Since $M\models\varphi^x(k/x)$ for every $\varphi\in\Sigma$,
and lemma~\ref{lem:std tr} applies
to $M$ and $(X,h)$, we obtain
$(X,h),k^M\models\Sigma$.
So by corollary~\ref{cor:extn},   $(\setC,g),\widehat{k^M}\models\Sigma$ as required.
\end{proof}

We can offer no strong completeness result
for $\lbc$ over the Cantor set $\setC$,
because as far as we know,
the $\lbc$-logic of $\setC$ 
has not been determined or axiomatised.
But the logic of $\setC$ in the weaker language $\c L_\bo^{\bdf}$
has been axiomatised by~\cite{Kud06:topdiff}, and this yields:

\begin{corollary}\label{cor:kudinov}
In the language $\c L_\bo^{\bdf}$,
the system \kudsys\
defined in~\cite[\S2]{Kud06:topdiff} is sound and strongly complete over the Cantor set.
\end{corollary}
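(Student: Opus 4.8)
The plan is to combine the completeness theorem of Kudinov~\cite{Kud06:topdiff} for \kudsys\ over the Cantor set with the compactness result just established (theorem~\ref{thm:stro compl cantor bo A}), via fact~\ref{fact:stro compl = compact}. First I would recall that \kudsys\ is, by~\cite[\S2]{Kud06:topdiff}, sound and complete over the Cantor set $\setC$ in the language $\c L_\bo^{\bdf}$; soundness and (weak) completeness are exactly what Kudinov proves. Second, I would invoke the discussion in \S\ref{ss:trans}: the language $\c L_\bo^{\bdf}$ is weaker than $\lbc$, since $\bdf$ can be expressed using $\forall$ and $\ang1$ (indeed $\ddf\varphi$ is equivalent to $(\neg\varphi\to\exists\varphi)\wedge(\varphi\to\ang1\varphi)$, and $\forall,\ang1$ are available in $\lbc$). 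Hence, by the observation in \S\ref{ss:compactness} that compactness passes to weaker languages, theorem~\ref{thm:stro compl cantor bo A} gives that $\c L_\bo^{\bdf}$ is compact over $\setC$.

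Third, I would apply fact~\ref{fact:stro compl = compact}: a deductive system that is complete over $X$ and whose language is compact over $X$ is automatically strongly complete over $X$. Taking $\vdash{}={}$\kudsys, $\c L{}={}\c L_\bo^{\bdf}$, and $X=\setC$, completeness holds by Kudinov and compactness holds by the previous paragraph, so \kudsys\ is strongly complete over $\setC$. Soundness over $\setC$ is given directly by Kudinov's theorem, so we conclude that \kudsys\ is sound and strongly complete over the Cantor set, as claimed.

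There is essentially no obstacle here beyond correctly quoting the two inputs: the real work was done in proving theorem~\ref{thm:stro compl cantor bo A} (the \fo-compactness argument via good $L$-\str s and their \uf\ extensions) and in Kudinov's axiomatisation. The only point requiring a word of care is the language-translation step --- checking that $\c L_\bo^{\bdf}$ really is weaker than $\lbc$ so that compactness transfers --- but this is immediate from the equivalences recorded in \S\ref{ss:trans}. I would therefore present the proof as a short three-line deduction citing~\cite{Kud06:topdiff}, theorem~\ref{thm:stro compl cantor bo A}, the remarks on weaker languages and compactness in \S\ref{ss:trans} and \S\ref{ss:compactness}, and fact~\ref{fact:stro compl = compact}.
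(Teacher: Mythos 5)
Your proposal is correct and follows essentially the same route as the paper: quote Kudinov's soundness and completeness of \kudsys\ over the Cantor set, note that $\c L_\bo^{\bdf}$ is weaker than $\lbc$ so that compactness over $\setC$ transfers from theorem~\ref{thm:stro compl cantor bo A}, and conclude by fact~\ref{fact:stro compl = compact}. The only cosmetic difference is that the paper cites the specific results of \cite{Kud06:topdiff} (lemmas 6 and 8 for soundness, theorem 36 for completeness over every 0-dimensional dense-in-itself metric space), which you may as well do for precision.
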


\begin{proof}
We work in the language $\c L_\bo^{\bdf}$.
As we mentioned in \S\ref{ss:trans},
$\c L_\bo^{\bdf}$ is weaker than $\lbc$,
 so by theorem~\ref{thm:stro compl cantor bo A}
it is compact over $\setC$.
By \cite[lemmas 6 \& 8]{Kud06:topdiff},
\kudsys\ 
is sound, and by \cite[theorem 36]{Kud06:topdiff}, complete, over every
0-\dm al \dims, including of course $\setC$\null.
Strong completeness of \kudsys\ over $\setC$ now follows
by fact~\ref{fact:stro compl = compact}.
\end{proof}

In the still weaker language $\c L^\forall_\bo$, we can present a strong completeness result for all 0-\dm al \dimss:

\begin{corollary}\label{cor:S4U all 0dim}
In the language $\c L^\forall_\bo$,
the system S4U is sound and strongly complete over 
every 0-\dm al \dims.
\end{corollary}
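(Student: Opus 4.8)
The plan is to prove this by a simple dichotomy, reducing everything to results already in hand. Since S4U is sound and complete over every 0-\dm al \dims\ in the language $\c L^\forall_\bo$ --- this is part of what is cited in the proof of corollary~\ref{cor:str compl noncomp} from \cite{gold:tang16} --- fact~\ref{fact:stro compl = compact} tells us that it suffices to establish that $\c L^\forall_\bo$ is compact over every 0-\dm al \dims\ $X$. I would then split into two cases according to whether $X$ is compact.

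First, if $X$ is not compact, then corollary~\ref{cor:str compl noncomp} already delivers strong completeness of S4U over $X$ in the language $\c L^\forall_\bo$ outright; alternatively, theorem~\ref{thm:stro compl noncompact} gives compactness of the stronger language $\lad$ over $X$, and compactness descends to the weaker language $\c L^\forall_\bo$. Second, if $X$ \emph{is} compact, then by Brouwer's characterisation recalled at the start of \S\ref{sec:cantor set}, $X$ is homeomorphic to the Cantor set $\setC$, so we may assume $X=\setC$. As noted in \S\ref{ss:trans}, $\c L^\forall_\bo$ is (strictly) weaker than $\lbc$, so theorem~\ref{thm:stro compl cantor bo A} --- compactness of $\lbc$ over $\setC$ --- hands us compactness of $\c L^\forall_\bo$ over $\setC$. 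Combining this with soundness and completeness of S4U over $\setC$ via fact~\ref{fact:stro compl = compact} yields strong completeness in this case too.

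Since the two cases are exhaustive, S4U is sound and strongly complete over every 0-\dm al \dims\ in the language $\c L^\forall_\bo$. Honestly there is no real obstacle: all the heavy lifting was done in the preceding two sections, and the one point deserving a moment's care is simply that the case split is genuinely exhaustive --- which rests on the fact that, up to homeomorphism, the Cantor set is the \emph{unique} compact 0-\dm al \dims. I would flag that explicitly so the reader sees why no third case can arise.
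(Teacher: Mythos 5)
Your proof is correct and follows essentially the same route as the paper: non-compact spaces are handled by corollary~\ref{cor:str compl noncomp}, and the compact case is reduced to the Cantor set, where compactness of the stronger language from theorem~\ref{thm:stro compl cantor bo A} plus soundness and completeness of S4U and fact~\ref{fact:stro compl = compact} give strong completeness. Your explicit remark that the dichotomy is exhaustive because the Cantor set is the unique compact 0-dimensional dense-in-itself metric space is exactly the point the paper relies on (stated at the start of \S\ref{sec:cantor set}).
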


\begin{proof}
By  corollary~\ref{cor:str compl noncomp},
S4U is strongly complete over every non-compact
0-\dm al \dims.
As we mentioned in the proof of the corollary,
S4U is sound and complete over every 
0-\dm al \dims, including the Cantor set.
So by theorem~\ref{thm:stro compl cantor bo A}
and fact~\ref{fact:stro compl = compact},
it is strongly complete over the Cantor set too.
\end{proof}

\section{Conclusion}
We now have some kind of picture of 
compactness and strong completeness 
 over 0-\dm al \dimss\ for 
languages able to express \pa\null.
A summary is in table~\ref{tab1}.
Entries in the two `compact' rows in the table indicate whether the 
designated language is compact (as defined in \S\ref{ss:compactness})
over the relevant space. The {\bf bold} entries imply
the others in the same row.
By fact~\ref{fact:stro compl = compact}, 
a `yes' entry implies strong completeness, 
and a `no' entry implies lack of it,
for any logic named.

\begin{table}[ht]
\begin{center}
\def\myskip{\hskip0.4em\null}
\begin{tabular}{crcccccc}
\hline
$X$&&
\myskip$\bo\forall$\myskip
&
\myskip$\bo[\neq]$\myskip
&
\myskip$\bo\ang n$\myskip
&
\myskip$\bod\forall$\myskip
&
\myskip$\bod\bdf$\myskip
&
\myskip$\bod\ang n$\myskip
\\
\hline
non-compact\myskip&logic:\myskip&S4U&\kudsys&?&KD4U&?$(*)$&?
\\
&compact?\myskip&yes&?&?&\bf yes&?&?
\\[5pt]
Cantor set&logic:\myskip&S4U&\kudsys&?&KD4U&DT$_1$&?
\\
&compact?\myskip&yes&yes&\bf yes&\bf no&no&no
\\[1pt]
\hline
\end{tabular}
\caption{Summary of results for a 0-\dm al \dims\ $X$}\label{tab1}
\end{center}
\end{table}

We now justify some of the statements in the table, make explicit the open questions 
arising from the gaps in the table, and  list some further questions.

Most entries in the table follow from
corollaries~\ref{cor:str compl noncomp},
\ref{cor:kudinov},
and~\ref{cor:S4U all 0dim}, and theorems~\ref{thm:str compl fails cantor [d],A}
and~\ref{thm:stro compl cantor bo A}.
We briefly discuss the penultimate column of the table.
The key fact is:
\begin{fact}[\cite{KudShe14}]\label{fact:KS}
The $\c L^{\bdf}_{\bod}$-logic of
any {separable} 0-\dm al  \dims\ is {\rm DT}$_1$.
\end{fact}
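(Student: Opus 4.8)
The plan is to prove soundness and completeness of {\rm DT}$_1$ over the class of separable 0-\dm al \dimss\ separately. Soundness is routine: the axioms for $\bdf$ and the boolean axioms are valid over every topological space, and the only space-dependent principles are the seriality axiom $\did\top$ --- valid because a \dims\ has no isolated point, so $\did X=X$ --- and the axiom $\bod p\to\bod\bod p$ (equivalently $\did\did p\to\did p$), which over the base logic of all spaces corresponds exactly to the $T_1$ separation property; all our spaces are $T_1$, and the inference rules plainly preserve validity.

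For completeness I would show that every {\rm DT}$_1$-consistent $\c L^{\bdf}_{\bod}$-formula $\varphi$ is satisfiable in any prescribed separable 0-\dm al \dims\ $X$. The first step is a finite-model property for {\rm DT}$_1$: taking a filtration of the canonical model through the (finite) subformula set of $\varphi$ yields a finite Kripke model $\c M=(W,R,h)$ with a world $w_0$ at which $\varphi$ holds and whose frame validates {\rm DT}$_1$ --- so $R$ is transitive and serial, $\bdf$ is interpreted as genuine inequality on $W$, and the filtrated $R$ meets the $T_1$-adequacy condition. One has to define the filtration of the difference modality with some care so that the quotient separates all relevant worlds; it is also convenient to first duplicate worlds so that every ``$\bdf$-type'' other than the genuinely rigid ones is realized at more than one world, which matters in the next step.

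The second step realizes $\c M$ inside $X$. Using 0-\dm ality, separability and density-in-itself, I would build a continuous open map (a \emph{$d$-morphism} for the derived-set semantics) $f\colon X\to W$ for which the truth lemma $(X,h\circ f),x\models\psi\iff\c M,f(x)\models\psi$ holds for every subformula $\psi$ of $\varphi$. The construction follows the tree unravelling of $(W,R)$, realizing the $R$-successors of a world as limit points in the derived-set sense --- in the style of the classical embeddings of finite ${\bf K4}$-frames into spaces, but adapted to $\bod$ --- with one new ingredient: $f$ must be \emph{almost injective} so that $\bdf$ is preserved. Indeed $(X,h\circ f),x\models\bdf\psi$ iff $\{f(y):y\neq x\}\subseteq\sem\psi^{\c M}$, and this equals $\c M,f(x)\models\bdf\psi$ precisely when $x$ is the unique $f$-preimage of $f(x)$; so the worlds that $\bdf$ can ``detect'' --- those uniquely falsifying some relevant subformula --- must be hit by exactly one point of $X$, while the remaining worlds may be blown up into 0-\dm al dense-in-itself clopen pieces. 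Here 0-\dm ality is exactly what lets one enclose each such rigid point in arbitrarily small clopen neighbourhoods realizing its $R$-successors as limits without destroying density-in-itself of the ambient space, and separability keeps the construction countable. Choosing $x$ with $f(x)=w_0$ then gives $(X,h\circ f),x\models\varphi$.

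Finally, the construction goes through for any separable 0-\dm al \dims\ $X$ --- such a space has arbitrarily fine clopen partitions, every non-empty open subset of it is again 0-\dm al and dense in itself, and it is second countable --- so every such space validates exactly the theorems of {\rm DT}$_1$, which is the assertion of the fact. The main obstacle is the simultaneous treatment of $\bod$ and $\bdf$ in this embedding: keeping $f$ a genuine $d$-morphism (which governs $\bod$) while making it almost injective (which governs $\bdf$), all the while keeping $X$ dense in itself and 0-\dm al throughout. The resolution is to let 0-\dm ality do the separating work --- blowing up only those worlds that the difference modality does not pin down, and isolating the others inside clopen neighbourhoods of shrinking diameter.
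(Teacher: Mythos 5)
The first thing to note is that the paper does not prove this fact at all: it is imported verbatim from \cite{KudShe14}, so your sketch can only be measured against the proof given there. Your outline does share its general shape --- relational completeness of {\rm DT}$_1$ with respect to finite frames, followed by a validity-preserving surjection from the space onto the finite frame which is a d-morphism for $\bod$ and injective over the worlds that $\bdf$ can pin down (Kudinov and Shehtman's dd-morphisms) --- and your almost-injectivity condition (a world uniquely falsifying a relevant subformula must have a singleton preimage) is exactly the right one. But both load-bearing steps are asserted rather than proved, and they are precisely the hard content of the cited result. (i) The finite model property for {\rm DT}$_1$ ``via filtration with some care'' is not routine: identifying worlds changes the extension of every $[\neq]$-formula, the duplication of non-rigid types must be shown to preserve {\rm DT}$_1$-validity and the $T_1$-adequacy condition, and none of this is carried out. (ii) The realization step is likewise only described: you never construct $f$. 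Mapping an \emph{arbitrary} separable 0-\dm al \dims\ onto an arbitrary finite rooted serial transitive {\rm DT}$_1$-frame, keeping the singleton-preimage points non-isolated (they must be $\did$-limits of points sent to their $R$-successors) while the remaining worlds receive dense-in-itself clopen blocks, is the other half of the cited proof; ``0-\dm ality does the separating work'' is a slogan, not an argument. Tellingly, separability plays no essential role in your sketch beyond ``keeping things countable''; since the paper explicitly records that it is open whether the fact extends to non-separable spaces, a purported proof in which separability is inessential should itself make you suspicious that the real difficulty has not been engaged.

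There are also two slips in the soundness paragraph. You never check the characteristic mixed axiom of {\rm DT}$_1$, namely $[\neq]\varphi\to\forall\bod\varphi$ (with $\forall\varphi$ abbreviating $\varphi\wedge[\neq]\varphi$); this is the one axiom that genuinely needs $T_1$ --- it fails at any point whose singleton is not closed --- whereas the pure $\bdf$-axioms are indeed topology-independent. And you attribute $T_1$ to the 4-axiom $\did\did p\to\did p$, which in fact corresponds to the weaker $T_D$ property; it is valid on your spaces only because $T_1$ implies $T_D$. Neither slip is fatal for soundness over 0-\dm al \dimss, but together with the gaps above it shows the axiomatization has not actually been checked against the derivative-plus-difference semantics, which is what a self-contained proof of this fact would require.
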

Briefly, {\rm DT}$_1$ can be axiomatised by the KD4 axioms for each of $\bod$ and $[\neq]$,
plus $[\neq]\varphi\to\forall\bod \varphi$.
As we saw in the proof of theorem~\ref{thm:stro compl cantor bo A},
the Cantor set is separable, so
by fact~\ref{fact:KS}, its $\c L^{\bdf}_{\bod}$-logic is {\rm DT}$_1$;
but strong completeness fails,
by theorem~\ref{thm:str compl fails cantor [d],A}.
If fact~\ref{fact:KS} extends to 
non-separable spaces,  the entry marked $(*)$
in the table, currently open, would also be {\rm DT}$_1$.

\begin{problem}\label{prb:general}\rm
Let $X$ be a non-compact 0-\dm al \dims\ (not necessarily separable).
Are the languages 
$\c L^{\bdf}_\bo$, 
$\c L^{\ang n}_\bo$, 
$\c L^{\bdf}_{\bod}$,
and $\c L^{\ang n}_{\bod}$
compact over $X$?
Axiomatise the logic of $X$ in these languages.
(For $\c L^{\bdf}_\bo$ it is \kudsys, as shown by \cite{Kud06:topdiff}.
For $\c L^{\bdf}_{\bod}$ and separable $X$ it is
{\rm DT}$_1$, by fact~\ref{fact:KS}).
Are the logics the same for all  $X$?
\end{problem}

\begin{problem}\label{prb:cantor logic}\rm
Axiomatise the logic of the Cantor set in the languages
$\lbc$ and $\c L^{\ang n}_{\bod}$.
\end{problem}
The language $\c L^{\ang n}_{\bod}$ is important.
 \cite{Gatto:PhD} proved that over T3 spaces
it is  equivalent 
to the monadic 2-sorted \fo\ language $L_t$ of \cite{FZ:top}.
This language can be thought of as
the fragment of the language $L$ of \S\ref{ss:2sorted}
without the boolean function symbols
 that is
invariant under change of base (in $L$-\str s where the set sort is a base
for the topology on the point sort).
 \cite{Gatto:PhD} also gave an axiomatisation of $\c L^{\ang n}_{\bod}$
that is sound and complete
over every class of T1 spaces that contains  all T3 spaces.
This may be relevant to problems~\ref{prb:general} and~\ref{prb:cantor logic}.

0-\dm al spaces are often the easiest to handle.
Going beyond them,
what about arbitrary  \dimss?
Or arbitrary \ms s?
We can ask about the logic of such spaces,
and strong completeness, in 
each of the languages we have considered.
For the  language $\c L_\bo$, the logic
of every \ms\ was determined by \cite{GGL15-ms},
and it seems reasonable to ask about corresponding strong completeness results.
We can even go beyond \ms s and ask for results on non-metrisable topological spaces.
And what about uncountable sets of formulas (when $\Var$ is allowed to be uncountable)?  This is not much explored.
Finally, where compactness fails, can we find 
novel strongly complete deductive
systems using infinitary inference rules?

\bibliographystyle{amsplain}


\providecommand{\bysame}{\leavevmode\hbox to3em{\hrulefill}\thinspace}
\providecommand{\MR}{\relax\ifhmode\unskip\space\fi MR }
\providecommand{\MRhref}[2]{%
  \href{http://www.ams.org/mathscinet-getitem?mr=#1}{#2}
}
\providecommand{\href}[2]{#2}

%
%


\end{document}